\setlist[enumerate,1]{label={(\roman*)}}
\newcolumntype{C}{>{\centering\arraybackslash}X}\newcolumntype{R}{>{\raggedleft\arraybackslash}X}\newcolumntype{L}{>{\raggedright\arraybackslash}X}
\newcolumntype{\ll}[1]{>{\hsize=#1\hsize\raggedright\arraybackslash}X}\newcolumntype{\rr}[1]{>{\hsize=#1\hsize\raggedleft\arraybackslash}X}\newcolumntype{\cc}[1]{>{\hsize=#1\hsize\centering\arraybackslash}X}
    \providecommand\@dotsep{5}
\theoremstyle{plain}
\newtheorem{thm}{Theorem}[section]
\newtheorem{theorem}[thm]{Theorem}
\newtheorem{lemma}[thm]{Lemma}
\newtheorem{proposition}[thm]{Proposition}
\newtheorem{corollary}[thm]{Corollary}
\theoremstyle{definition}
\newtheorem{definition}[thm]{Definition}
\newtheorem{example}[thm]{Example}
\theoremstyle{remark}
\newtheorem{remark}[thm]{Remark}
\numberwithin{equation}{section}
\renewcommand{\leq}{\leqslant} \renewcommand{\geq}{\geqslant}
\renewcommand{\epsilon}{\varepsilon} \renewcommand{\subset}{\subseteq}  
\renewcommand{\{}{\lbrace}
\renewcommand{\}}{\rbrace}
\newcommand{\sm}{\setminus}
\renewcommand{\bar}{\overline}
\newcommand{\ol}[1]{\overline{#1}}
\newcommand{\os}[1]{\widetilde{#1}}
\newcommand{\comment}[1]{}
\newcommand\restr[2]{{\left.\kern-\nulldelimiterspace #1 \vphantom{\big|} \right|_{#2} }}
\newcommand{\bD}{\mathbb{D}}
\newcommand{\bN}{\mathbb{N}}
\newcommand{\bZ}{\mathbb{Z}}
\newcommand*{\cal}{\mathcal}
\newcommand{\cK}{\mathcal{K}}
\newcommand{\calK}{\mathcal{K}}
\newcommand{\cM}{\mathcal{M}}
\newcommand{\calM}{\mathcal{M}}
\newcommand{\cP}{\mathcal{P}}
\newcommand{\calP}{\mathcal{P}}
\newcommand{\cU}{\mathcal{U}}
\newcommand{\calU}{\mathcal{U}}
\newcommand{\cX}{\mathcal{X}}
\newcommand{\cY}{\mathcal{Y}}
\newcommand{\vect}[1]{\bar{\mathrm{#1}}}
\newcommand{\vx}{\vect{x}}
\newcommand{\va}{\vect{a}}
\newcommand{\ve}{\vect{e}}
\newcommand{\rr}{\varrho}
\DeclareMathOperator{\aut}{\Gamma} \DeclareMathOperator{\Aut}{Aut} 
\DeclareMathOperator{\cay}{Cay}
\DeclareMathOperator{\fg}{\Pi}
\DeclareMathOperator{\Mon}{Mon}
\DeclareMathOperator{\mon}{\Mon}
\newcommand{\covers}{\searrow}
\newcommand{\gen}[1]{\langle #1 \rangle}
\newcommand{\frnd}{\heartsuit}
\newcommand{\Part}{\mathscr{P}}
\newcommandx{\dtwoSM}[2][1=\cM, 2=s] {\hat{2}#2^{#1 - 1}}
\newcommand{\noshow}[1]{}
\let\blx@rerun@biber\relax
\begin{document}

\title{Cayley extensions of maniplexes and polytopes}

\author{Gabe Cunningham}
\address{Wentworth Institute of Technology, 02115  Boston MA, USA}
\email{cunninghamg1@wit.edu}

\author{Elías Mochán}
\address{Department of Mathematics, Northeastern University, 02115 Boston MA, USA}
\email{j.mochanquesnel@northeastern.edu}

\author{Antonio Montero}
\address{Faculty of Mathematics and Physics, University of Ljubljana, 1000 Ljubljana, Slovenia}
\email{antonio.montero@fmf.uni-lj.si}

\keywords{Maniplexes, maps, abstract polytopes}

\subjclass[2020]{Primary: 52B15 Secondary:  05C25, 05E18.}

\begin{abstract}
A map on a surface whose automorphism group has a subgroup acting regularly on its vertices is called a Cayley map. Here we generalize that notion to maniplexes and polytopes. We define $\calM$ to be a \emph{Cayley extension} of $\calK$ if the facets of $\calM$ are isomorphic to $\calK$ and if some subgroup of the automorphism group of $\calM$ acts regularly on the facets of $\calM$. We show that many natural extensions in the literature on maniplexes and polytopes are in fact Cayley extensions. We also describe several universal Cayley extensions. Finally, we examine the automorphism group and symmetry type graph of Cayley extensions.
\end{abstract}

\maketitle

\section{Introduction}

    Maniplexes, introduced in \cite{Wilson_2012_ManiplexesPart1}, are combinatorial structures that generalize maps on surfaces and abstract polytopes.
    Indeed, maniplexes of rank $3$ are maps (and vice-versa), and every maniplex $\cM$ of rank $n+1$ can be constructed by gluing together maniplexes of rank $n$ (the facets of $\cM$) in an appropriate way.
    Whenever all the facets of $\cM$ are isomorphic to a given maniplex $\cK$, then $\cM$ is an \emph{extension} of $\cK$.
    Many tools and ideas from the study of maps can be adapted to maniplexes, but often there are additional difficulties to be overcome in higher ranks.

    In this paper, we study \emph{Cayley (co)extensions} of maniplexes, inspired by Cayley graphs and maps which are named after A. Cayley.
Given a group $\Gamma$ and a set $\Sigma$ such that $\sigma^{-1} \in \Sigma$ for every $\sigma \in \Sigma$, the \emph{Cayley graph} $\cay(\Gamma, \Sigma)$ is a graph whose vertices are the elements of $\Gamma$ and two vertices $\alpha,\beta \in \Gamma$ are connected if and only if $\alpha \beta^{-1} \in \Sigma$.
    A classical theorem by Sabidussi \cite{sabidussi1958class} proves that a graph $X$ is a Cayley graph if and only if there is a group $\Gamma \leq \Aut(X)$ acting regularly on the vertices of $X$.
    This notion has been generalized to maps \cite{richter2005cayley}: an orientable map $M$ is a \emph{Cayley map} if there exists a group of orientation-preserving automorphisms acting regularly on the vertices of $M$.
    There have been generalizations of Cayley maps to include maps on non-orientable surfaces (see \cite{KwakKwon_2006_UnorientedCayleyMaps}, for example) but the theory is not as developed as that of orientable Cayley maps.

    Analogously to Cayley maps, we define a maniplex $\calM$ to be a \emph{Cayley extension} (resp. coextension) if some subgroup of the automorphism group of $\calM$ acts regularly on the facets (resp. vertices) of $\calM$. This provides one of the first generalizations of the Cayley property to the context of maniplexes and abstract polytopes.
    Using voltage graphs, we describe how to create Cayley extensions of a given facet with a given group acting regularly on the facets. We will show that many natural extensions of maniplexes and polytopes can be constructed in this way.

    In Section\nobreakspace \ref {sec:prelim} we review the basics of maniplexes and voltage graphs. Then in Section\nobreakspace \ref {sec:extensions}, we define Cayley extensions formally, and we describe some basic properties. We consider the question of when a Cayley extension is an abstract polytope in Section\nobreakspace \ref {sec:polytopality}. Section\nobreakspace \ref {sec:universal} demonstrates several examples of known extensions that can be described using our framework, and then defines new classes of universal extensions. Finally, Section\nobreakspace \ref {sec:symmetries} determines the symmetry type graphs of these universal extensions, and characterizes when different ways of constructing a universal extension lead to the same result.
 \section{Preliminaries}\label{sec:prelim}

\subsection{Graphs}
In this work we use the definition of graph used in \cite{MalnicNedelaSkoviera_2000_LiftingGraphAutomorphisms}, which is slightly more general than the typical definition.
A \emph{graph} $X$ is a quadruple $(D,V,I,(\cdot)^{-1})$ where $D$ and $V$ are disjoint sets, $I:D \to V$ is a function and $(\cdot)^{-1}: D \to D$ is an involutory permutation of $D$.
The set $V$ is the set of \emph{vertices} of $X$, and the set $D$ is the set of \emph{darts} of $X$.
For a dart $d$, the vertex $I(d)$ is the \emph{initial vertex} or \emph{starting point} of $d$ and $d^{-1}$ is the \emph{inverse} of $d$. The \emph{terminal vertex} or \emph{endpoint} of $d$ is the starting point of $d^{-1}$.
The \emph{edges} are the orbits of $D$ under the action of $(\cdot)^{-1}$. If an edge consists only of a single dart, that is, a dart $d$ that satisfies $d^{-1}=d$, then it is called a \emph{semiedge}.
A \emph{loop} is an edge consisting of two darts whose initial vertex is the same.
A \emph{link} is an edge that is not a loop or a semiedge;
that is, an edge whose two darts are different and have different starting points.

Most classical graph theoretical concepts extend naturally to our definition; we shall just make precise a few of them that are relevant for this manuscript.

If $X=(V(X),D(X), I_{X}, (\cdot)^{-1}_{X})$ and $Y=(V(Y),D(Y), I_{Y}, (\cdot)^{-1}_{Y})$ are graphs, a \emph{graph homomorphism} $f: X \to Y$ is a pair of functions $f=(f_V, f_D)$ such that $f_V: V(X) \to V(Y)$, $f_D: D(X) \to D(Y)$, and
\[\begin{aligned}
I_{Y}\left( d f_{D} \right) &= \left( I_{X} (d) \right)f_V && \text{and} \\
\left( d f_D \right)_{Y}^{-1} &= ( d^{-1}_{X} )f_D
\end{aligned}
\]
for every dart $d \in D(X)$.
Note that we evaluate graph homomorphisms on the right.

If $v\in V(X)$ and $d \in D(X)$, we shall write $vf$ and $df$ instead of $vf_{V}$ and $df_{D}$.
If both $f_{V}$ and $f_{D}$ are bijective and $f^{-1}:=(f_{V}^{-1},f_{D}^{-1})$ is also a graph homomorphism, then we say that $f$ (and $f^{-1})$ is an \emph{isomorphism} and that $X$ and $Y$ are \emph{isomorphic} (and write $X \cong Y$).
Naturally, for a graph $X$, an \emph{automorphism} is an isomorphism $f:X \to X$.

A \emph{path} is a finite sequence  $W=(d_{1}, \dots, d_{k})$ of darts such that the endpoint of $d_{i}$ is the starting point of $d_{i+1}$ for $i \in \{0, \dots, k-1\}$.
We usually omit commas and parentheses and simply write $W=d_{1} \cdots d_{k}$.
The number $k$ is the \emph{length} of $W$.
The starting point of $d_1$ is the \emph{starting point} of $W$; we also say that $W$ \emph{starts} at this vertex.
Similarly, the \emph{endpoint} of $W$ is the endpoint of $d_{k}$ and we say that $W$ \emph{ends} at this vertex.
A single vertex is a path of length $0$.
A path is \emph{closed} if its starting point and its endpoint are the same vertex.

An \emph{elementary graph move} on a path consists of removing or inserting two consecutive inverse darts.
Two paths $W_{1}$ and $W_{2}$ are \emph{graph-homotopic} if their initial vertex is the same and $W_{1}$ can be obtained from $W_{2}$ after a (possibly empty) sequence of elementary graph moves.
In this case we write $W_{1} \sim W_{2}$.
Clearly graph-homotopy is an equivalence relation and we often identify a path with its homotopy class.
Observe that if $W$ is graph-homotopic to a path of length $0$ then $W$ must be closed.

Given two paths $W_1$ and $W_2$ we say that they are \emph{compatible} if the endpoint of $W_1$ is the starting point of $W_2$.
We can operate on compatible paths by concatenation.
That is, if $W_1=d_{1} \cdots d_{k}$ and $W_2=a_{1} \cdots a_{\ell}$ then $W_1W_2= d_{1}  \cdots  d_{k}  a_{1}  \cdots  a_{\ell}$.
If $W_1 \sim W_1'$ and $W_2 \sim W_2'$ then $W_1W_2 \sim W_1'W_2'$, which implies that we can operate not only on compatible paths but also on homotopy classes of compatible paths.

The \emph{fundamental groupoid} of a graph $X$, denoted by $\fg(X)$, is the set of graph-homotopy classes of $X$ with the partial operation defined above.
If $u$ is a vertex in $X$, then the fundamental group of $X$ at $u$, denoted by $\fg^{u}(X)$, is the set of graph-homotopy classes of closed paths at $u$ with concatenation as operation.
We also denote by $\fg^{u,v}(X)$ the set of homotopy classes of paths from $u$ to $v$.
Observe that $\fg^{u}(X)$ is actually a group and that if $W \in \fg^{u,v}$ then  $\fg^{u}(X) = W \fg^{v}(X) W^{-1} $.

\subsection{Maniplexes and premaniplexes}

A \emph{properly $n$-edge-colored graph} is an $n$-valent graph $X$ with a coloring $c:D(X) \to \left\{ 0, \dots, n-1 \right\} $ such that $c(d_{1}) \neq c(d_{2})$ whenever $I(d_{1}) = I(d_{2})$, and $c(d) = c(d^{-1})$ for every dart $d$.
Observe that thanks to this last condition $c$ is well defined on edges.
If $d$ is a dart (resp. an edge) we say that $a$ is an $i$-dart (resp. $i$-edge) if $c(d)=i$.
Notice that each vertex $x$ of $X$ has a unique $i$-dart $d_{i}$ such that $I(d_{i}) = x$.
The endpoint of $d_{i}$ is the \emph{$i$-adjacent} vertex of $x$ and it is denoted by $x^{i}$.
Observe that a properly $n$-edge-colored graph cannot have loops but it might have semi-edges or parallel edges.
For a sequence $w=(i_{1}, \dots, i_{k})$ with $i_{1}, \dots, i_{k} \in \left\{ 0, \dots, n-1 \right\} $ a $w$-path is a path $(d_{1}, \dots, d_{k})$ such that $c(d_{j}) = i_{j}$ for $j \in \left\{ 1, \dots, k \right\} $.
In this situation we say that $w$ is the \emph{color sequence} of the path.
Observe that a path is completely determined by its initial vertex and its color sequence.

A \emph{premaniplex of rank $n$} or \emph{$n$-premaniplex} $\cX$ is a properly $n$-edge-colored graph such that every $(i,j,i,j)$-path is closed whenever $i$ and $j$ are non-consecutive.
If $\cX$ is also connected and the $(i,j,i,j)$-paths are not only closed but actually alternating $4$-cycles then $\cX$ is a \emph{maniplex}.

Natural examples of maniplexes are the flag-graphs of polytopes.
In fact, maniplexes were introduced by Wilson \cite{Wilson_2012_ManiplexesPart1} as a common generalization of maps and abstract polytopes.
The flag-graph of a map or an $n$-polytope is a $3$-maniplex or an $n$-maniplex respectively\footnote{In \cite{Wilson_2012_ManiplexesPart1}, the indexing is is shifted by 1, so that the flag-graph of a map is called a $2$-maniplex, for example. However, by now the established convention is the one we present here.}.
On the other hand, in \cite{GarzaVargasHubard_2018_PolytopalityManiplexes} Hubard and Garza-Vargas give a characterization of those maniplexes that are flag-graphs of polytopes.
The notion of  premaniplexes is slightly less natural but it conveniently arises when considering the symmetry type graphs of abstract polytopes.
For a polytope $\cP$ and $\Gamma \leq \aut(\cP)$, the \emph{symmetry type graph} of $\cP$ with respect to $\Gamma$ is the edge-colored graph obtained as the quotient of the flag graph of $\cP$ by $\Gamma$.
When $\Gamma = \aut(\cP)$ we just call it the \emph{symmetry type graph of $\cP$} (see \cite{CunninghamDelRioFrancosHubardToledo_2015_SymmetryTypeGraphs} for details).
Premaniplexes are the natural candidates to be symmetry type graphs of abstract polytopes, in the sense that every symmetry type graph is a premaniplex.
Connected premaniplexes are precisely what were called \emph{admissible graphs} in  \cite{CunninghamPellicer_2018_OpenProblems$k$}.

Maniplexes are often thought of as a graph theoretical version of abstract polytopes.
To emphasise this fact, we shall call the vertices of a maniplex \emph{flags}.
If $\cM$ is an $n$-maniplex and $i \in \{0, \dots, n-1\}$, the \emph{$i$-faces} of $\cM$ are the connected components of $\cM$ after removing the edges of color $i$.
The $(n-1)$-faces are called \emph{facets}.
The set of $i$-faces of $\cM$ is denoted by $(\cM)_{i}$ in particular, $(\cM)_{n-1}$ denotes the set of facets of $\cM$.
More generally, if $0 \leq k < \ell \leq n-1$, the \emph{$(k,\ell)$-sections} of $\cM$ are the connected components of $\cM$ after removing the edges of color $i$ for $i < k$ and $i > \ell$.
If $\Phi$ is a flag of $\cM$ and $i \in \left\{ 0, \dots, n-1 \right\} $ we denote by $\Phi_{i}$ the $i$-face of $\Phi$.
The $(1,n-1)$-sections are called \emph{vertex-figures}.
The notions of faces, facets, sections and vertex-figures extend naturally to premaniplexes.

The \emph{universal string Coxeter group of rank $n$} is the group $W^{n}$ generated by $r_{0}, \dots, r_{n-1}$ subject to the following defining relations:
\[\begin{aligned}
	r_{i}^{2} &= 1 && \text{for } i \in \left\{ 0, \dots, n-1 \right\}, \\
	(r_{i}r_{j})^{2} &=1 && \text{if } \left| i-j \right| > 1.
\end{aligned}\]
If $\cX$ is an $n$-premaniplex the group $W^{n}$ acts on the vertices of $\cX$ by \[r_{i} x = x^{i}.\]
The induced permutation group $\mon(\cX)$ is the \emph{monodromy group} (also called the \emph{connection group}) of $\cX$ and the elements of $\mon(\cX)$ are called \emph{monodromies}.
Note that we evaluate monodromies on the left.
Observe that a path with color sequence $(i_{1}, \dots, i_{k})$ determines the element $w=r_{i_{k}} \cdots r_{i_{1}} \in W^{n}$ such that if the path starts at $x$ then the endpoint is $wx$.

The group $\mon(\cX)$ acts transitively on every connected component of $\cX$, hence $\cX$ is connected if and only if $\mon(\cX)$ is transitive and in this case it is a maniplex if and only if the permutations induced by $r_{i}$ and by $r_{i}r_{j}$ for $i,j \in \left\{0, \dots, n-1  \right\} $ are fixed-point-free.

Given two $n$-premaniplexes $\cX$ and $\cY$, a \emph{homomorphism} $\phi: \cX \to \cY$ of premaniplexes is a color-preserving graph homomorphism.
This is equivalent to \[(r_{i}x) \phi = r_{i}(x \phi)\] for every $i \in \left\{ 0, \dots, n-1 \right\} $.
A homomorphism $\phi: \cX \to \cY$ is a \emph{covering} if for every vertex of $y $ of $ \cY$ there exists a vertex $x$ in $\cX$ such that $x \phi = y$.
Almost every homomorphism is a covering in the sense that if $y$ is in the image of $\phi$, then the whole connected component of $\cY$ containing $y$ is in the image of $\phi$.
More precisely, if $y_{1}$ and $y_{2}$ lie in the same connected component of $\cY$, then there exists $w \in W^{n}$ such that $y_{2} = w y_{1}$.
Hence if $x_{1} \phi = y_{1}$, then
\[(wx_{1}) \phi = w (x_{1} \phi) = w y_{1} = y_{2}. \]
It follows that every homomorphism is determined by the image of a given vertex of each connected component of $\cX$, and if $\cY$ is connected, every homomorphism $\phi: \cX \to \cY$ is a covering.
This notion of coverings has been previously treated in the context of abstract polytopes where they are often called \emph{rap-maps} (see \cite[Section 2]{MonsonPellicerWilliams_2014_MixingMonodromyAbstract} and \cite[Section 2D]{McMullenSchulte_2002_AbstractRegularPolytopes})

The notions of isomorphism and automorphism of premaniplexes extend naturally.
If $\cX$ and $\cY$  are $n$-premaniplexes an \emph{isomorphism} $\phi: \cX \to \cY$ is just a color-preserving isomorphism and an automorphism of $\cX$ is an isomorphism $\phi: \cX \to \cX$.

Automorphisms of (pre)maniplexes are the graph theoretical analogue of automorphisms of abstract polytopes, which in turn generalize geometrical symmetries of convex polytopes and similar structures.
We denote by $\aut(\cX)$ the group of automorphisms of $\cX$.
It can be easily seen that if $\cX$ is connected then the action of $\aut(\cX)$ on $\cX$ is free.

If the action of $\aut(\cX)$ is also transitive we say that $\cX$ is \emph{regular}.
We point out that the word \emph{reflexible} has been used for flag transitive maniplexes; we use \emph{regular} as an attempt to unify the terminology with that of abstract polytopes.

Regular abstract polytopes are by far the most studied class of highly symmetric polytopes.
Many of the classical results in the theory of abstract regular polytopes extend naturally to regular maniplexes.
In particular, if $\cM$ is a regular $n$-maniplex and $\Phi$ is a fixed base flag, then for each $i \in \left\{ 0, \dots, n-1 \right\} $ there exists a (unique) automorphism $\rho_{i}$ such that \[\Phi \rho_{i} = r_{i} \Phi.\]
It can be proved easily that $\aut(\cM) = \left\langle \rho_{0}, \dots, \rho_{n-1} \right\rangle $.

A maniplex $\cM$ is \emph{chiral} if $\aut(\cM)$ has two orbits on flags in such a way that adjacent flags lie in different orbits.
Chirality is the combinatorial version of a geometric polytope-like object admitting full rotational symmetry but no reflections.
Chiral abstract polytopes were introduced in the 90's by Schulte and Weiss \cite{SchulteWeiss_1991_ChiralPolytopes} and since then they have been extensively studied. See \cite{Pellicer_2012_DevelopmentsOpenProblems} for a nice review of the basic theory and interesting problems on the theory of chiral polytopes.

A natural consequence of $\aut(\cM)$ acting freely on $\cM$ is that we can use the number of flag-orbits of $\aut(\cM)$ as a degree of symmetry of a maniplex.
A maniplex $\cM$ is a \emph{$k$-orbit} maniplex if $\aut(\cM)$ has $k$ flag-orbits.
Chiral maniplexes are just one of $2^{n-1}$ symmetry types of $2$-orbit maniplexes.
The mere existence of maniplexes for all such symmetry types was recently established \cite{PellicerPotocnikToledo_2019_ExistenceResultTwo}.
The theory of $k$-orbit polytopes  for $k\geq 3$ is even less explored.
The possible symmetry types for $3$- and $4$-orbit maniplexes were described in \cite{CunninghamDelRioFrancosHubardToledo_2015_SymmetryTypeGraphs}, where the corresponding automorphism groups were also described.
Very recently, Hubard and the second author established the necessary conditions to build $k$-orbit abstract polytopes as coset geometries (see \cite{HubardMochan_AllPolytopesAre_preprint}).
See \cite{CunninghamPellicer_2018_OpenProblems$k$} for a review of the basic theory and open problems for $k$-orbit polytopes.

If $\cX$ is an $n$-premaniplex, the dual of $\cX$ (often denoted by $\cX^{\ast}$) is the premaniplex with the same vertex set and such that two vertices are $i$-adjacent in $\cX^{\ast}$ if and only if they are $(n-i-1)$-adjacent in $\cX$.
Notice that up to a permutation of the color set, if $F$ is a facet in $\cX$, then the graph induced by the vertices of $F$ in $\cX^{\ast}$ is isomorphic to $F^{\ast}$ and it is a vertex-figure of $\cX^{\ast}$.

The key notion of this paper is that of \emph{extensions}.
If $\cM$ is an $(n+1)$-maniplex such that all its facets are isomorphic to a given $n$-maniplex $\cK$, then we say that $\cM$ is an extension of $\cK$.
Similarly, if all the vertex figures of $\cM$ are isomorphic to $\cK$ we say that $\cM$ is a \emph{coextension} of $\cK$.
Observe that $\cM$ is a coextension of $\cK$ if and only if $\cM^{\ast}$ is an extension of $\cK^{\ast}$.
Extensions and coextensions have been deeply studied on the context of regular and chiral polytopes (see \cite{Cunningham_2021_FlatExtensionsAbstract,CunninghamPellicer_2014_ChiralExtensionsChiral,Montero_2021_SchlaefliSymbolChiral,Pellicer_2010_ExtensionsDuallyBipartite,Schulte_1985_ExtensionsRegularComplexes,SchulteWeiss_1995_FreeExtensionsChiral}, for example).

\subsection{Voltage assignments}
If $X$ is a graph and $G$ is a group, a \emph{voltage assignment} is a function $\xi: \fg(X) \to G$ that satisfies $\xi(W_1 W_2) = \xi(W_2)\xi(W_1)$  for every pair of compatible paths $W_1$ and $W_2$.
Notice that traditionally a voltage assignment $\xi$ is defined such that $\xi(W_1 W_2)=\xi(W_1)\xi(W_2)$ (cf. \cite{MalnicNedelaSkoviera_2000_LiftingGraphAutomorphisms}). The reason we do it the other way is because we are considering automorphisms acting on the right and monodromies on the left, as is customary in the polytopes and maniplexes literature.
The group $G$ is called the \emph{voltage group} of $\xi$ and the pair $(X,\xi)$ is called a \emph{voltage graph}.

Since every path can be thought as the product of its darts, we can regard a voltage assignment as a function $\xi:D(X) \to G$ such that for every dart $d$, $\xi(d^{-1})= \xi(d)^{-1}$. The voltage of a path $W= d_{1}, \dots, d_{k}$ is simply $\xi(d_{k}) \cdots \xi(d_{1})$.

If $(X,\xi)$ is a voltage graph, the \emph{derived graph} is the graph $X^{\xi}$ whose vertices and darts are the elements in $V(X) \times G$ and $D(X) \times G$, respectively.
The initial vertex of a dart $(d,g)$ is $(I(d),g)$ and its inverse is $(d^{-1}, \xi(d)g)$.
The adjacent vertices of a given vertex $(x,g)$ are the vertices $(y, \xi(d)g)$ for each dart $d$ starting at $x$ and ending at $y$.
Equivalently, two vertices $(x,g)$ and $(y, h)$ are connected if there exists a dart $d$ starting at $x$ and ending at $y$ whose voltage is $h g^{-1}$.

An \emph{elementary (maniplex) move} on a path $W$ consists of either adding or removing the same color two times at any two consecutive positions
or swapping two non-consecutive colors in consecutive positions in the color sequence.
More precisely, let $W_{0}$ be a path with color sequence $(i_{1}, \dots, i_{k})$ and starting vertex $x$.
Assume that  $i \in \left\{ 0, \dots, n-1 \right\} $ and that $j,\ell \in \left\{ 1, \dots, k \right\} $ with $\ell < k$ and $|i_\ell-i_{\ell+1}|>1$.
Let $W_{1}$ and $W_{2}$ be paths starting at $x$ and with color sequences $(i_{1}, \dots, i_{j}, i,i, i_{j+1}, \dots, i_{k})$ and $(i_{1}, \dots, i_{\ell - 1} , i_{\ell+1}, i_{\ell}, i_{\ell+2} \dots i_{k})$, respectively.
Then $W_{0} \mapsto W_{1}$, $W_{1} \mapsto W_{0}$
and $W_{0} \mapsto W_{2}$ are elementary maniplex moves.

We say that two paths in a premaniplex are \emph{maniplex-homotopic} if we can turn one into the other by a finite sequence of elementary maniplex moves. When it is clear from context, we will drop the modifier `maniplex' and merely call such paths homotopic.
Observe that two paths in a premaniplex are homotopic if and only if they start at the same vertex and determine the same element of $W^{n}$.
For an $n$-premaniplex $\cX$ and $x,y$ in $\cX$ we will abuse notation and denote by $\fg(\cX)$, $\fg^{x}(\cX)$ and $\fg^{x,y}(\cX)$ the fundamental groupoid, the fundamental group and the set of homotopy classes of paths starting at $x$ and ending at $y$ with respect to maniplex homotopy.
Moreover, if $I \subset \left\{ 0, \dots n-1\right\} $ we denote by $\fg_{I}(\cX)$, $\fg^{x}_{I}(\cX)$ and $\fg^{x,y}_{I}(\cX)$ the corresponding subsets of homotopy-classes containing paths using only colors in $I$.
When using this notation we often write $\bar{I}$ instead of $\left\{ 0, \dots, n-1 \right\} \sm I $ and if $I$ is an interval, meaning there exist $k \leq m$ such that $I = \left\{ i : k \leq i \leq m  \right\} $ we write $[k,m]$ instead of $I$.

Let $\cX$ be a premaniplex and let $\fg(\cX)$ be its fundamental groupoid.
If $\xi:\fg(\cX)\to G$ is a voltage assignment such that $\xi(W)$ is the identity in $G$ whenever $W$ is a path of length 4 alternating between two non-consecutive colors, we say that the pair $(\cX,\xi)$ is a \emph{voltage premaniplex}.
In other words, a voltage premaniplex is a voltage graph where the voltage of the maniplex homotopy class of a  path is well defined.

Our main technique in this paper is to build maniplexes as derived graphs of certain voltage premaniplexes.

 \section{Cayley extensions and coextensions}
\label{sec:extensions}

    \subsection{Definition and examples}\label{sec:defs_and_examples}

    Let $\calK$ be an $n$-maniplex and $\calM$ an extension of $\calK$.
Let $G \leq \Gamma(\calM)$.
    We say that $\calM$ is a \emph{Cayley extension of $\calK$ by $G$} if $G$ acts regularly (that is, transitively and freely) on the set of facets of $\calM$.
    If we say that \emph{$\calM$ is a Cayley extension of $\calK$} we mean that it is a Cayley extension by some group $G \leq \Gamma(\calM)$.

    Note that if $\calM$ is a Cayley extension of $\calK$, the quotient $\calK_{r_n} = \calM/G$ has only one facet, and that facet is isomorphic to $\calK$.
    In other words, $\calK_{r_n}$ consists of a copy of $\calK$ with some extra edges of color $n$, and satisfying that two darts of color $n$ starting at flags in a fixed facet $F$ of $\calK$ also end at some fixed facet $F'$.
    The monodromy $r_n$ in $\calK_{r_n}$ can be thought of as a set of isomorphisms between pairs of (not necessarily distinct) facets of $\calK$, in such a way that every facet is paired with exactly one other facet.

    If $\calM$ is a Cayley extension of $\calK$ we can recover $\calM$ as the derived graph of a voltage graph $(\calK_{r_n},\xi)$ where  $\xi:\Pi(\calK_{r_n})\to G$ satisfies the following properties:
    \begin{enumerate}
        \item The voltage $\xi(d)$ of a dart $d$ is trivial whenever the dart has a color smaller than $n$.
        \item If $d$ and $d'$ are two darts of color $n$ starting at flags in the same facet of $\calK$, then $\xi(d)=\xi(d')$.
        This means in particular that we may alternatively think of $\xi$ as a function from the set $(\calK)_{n-1}$ 
of facets of $\calK$ to the voltage group $G$.
    \end{enumerate}

    \begin{definition}
    A \emph{Cayley extender} is a triple $(\calK, r_n, \xi)$ where:
    \begin{enumerate}\item \label{item:maniplex} $\calK$ is an $n$-maniplex,
        \item \label{item:rn} $r_n$ is a permutation  of the flags of $\calK$ satisfying that $r_n^2=1$ and that if $F$ is a facet of $\calK$, then, the restriction $\restr{r_n}{F}$ is an isomorphism between $F$ and $r_n(F)$. (In other words, $r_n$ commutes with $\langle r_0, \ldots, r_{n-2} \rangle$),
        \item \label{item:voltageGroup} There is a group $G$, called the \emph{voltage group of $(\calK, r_n, \xi)$}, such that $\xi:(\calK)_{n-1}\to G$ is a function satisfying that $\xi(r_n(F)) = \xi(F)^{-1}$.
    \end{enumerate}
    Additionally, a pair $(\cK, r_{n})$ satisfying Items\nobreakspace \ref {item:maniplex} and\nobreakspace  \ref {item:rn} above is called a \emph{pre-extender} of $\cK$.
    \end{definition}

    Given a Cayley extender we can construct a voltage premaniplex $(\calK_{r_n},\xi)$
    as follows:
    the flags of $\calK_{r_n}$ are the flags of $\calK$ and for $i<n$ the $i$-adjacencies on $\calK_{r_n}$ are the same as in $\cK$.
    For a flag $\Phi$ in $\cK$, its  $n$-adjacent flag is $r_n \Phi$.
    Finally, the voltage $\xi(d)$ is $\xi(I(d)_{n-1})$, if $d$ has color $n$ and $1$ otherwise. That is, the voltage of a dart of color $n$ is the voltage of the facet at which it starts, and all other darts have trivial voltage.
The premaniplex $\calK_{r_n}$ is called a \emph{pre-extension} of $\cK$. 
    Note that the automorphism group $\Gamma(\calK_{r_n})$ consists of all the automorphisms of $\calK$ that commute with the monodromy $r_n$ and it may be a proper subgroup of $\Gamma(\cal K)$.
    
    Notice that the premaniplex $\calK_{r_n}$ is regular if and only if $\calK$ is regular and there exists $w \in W^{n}$ such that the $n$-adjacent flag of $\Phi$ is $w\Phi$ for every $\Phi$ in $\cK$.
    More generally if $\cK$ is a $k$-orbit maniplex with orbits $O_{1}, \dots, O_{k}$, then  $\cK_{r_{n}}$ has $k$ orbits (hence $\Gamma(\calK_{r_n})$ coincides with $\Gamma(\cal K)$) if and only if it is possible to chose $k$ elements $w_{1}, \dots, w_{k} \in W^{n}$ such that $\Phi^{n} = w_{i}\Phi$ whenever $\Phi \in O_{i}$.

    \begin{proposition} \label{prop:DerivedIsManiplex}
    The derived graph $\calK_{r_n}^\xi$ will be a maniplex if and only if:
    \begin{enumerate}
        \item The set $\xi(\calK_{n-1})=\{\xi(F) : F \in(\calK)_{n-1} \}$ is a generating set for $G$.
        \item If $\restr{r_n}{F}$ is the identity for some facet $F$, or if $r_n\Phi$ is adjacent to $\Phi$ for some flag $\Phi$ in $F$, then $\xi(F)\neq 1$.
    \end{enumerate}    
    \end{proposition}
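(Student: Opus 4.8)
The plan is to apply the maniplex criterion recalled in Section~\ref{sec:prelim}: a connected $(n+1)$-premaniplex is a maniplex exactly when each of the permutations induced by $r_i$ and by $r_ir_j$, for $i,j\in\{0,\dots,n\}$, is fixed-point-free. First I would record that $\calK_{r_n}^\xi$ is a properly $(n+1)$-edge-colored graph and, since $(\calK_{r_n},\xi)$ is a voltage premaniplex, that every alternating $4$-path with non-consecutive colors has trivial voltage and hence lifts to a closed path; thus $\calK_{r_n}^\xi$ is automatically a premaniplex. Writing the monodromies explicitly on the derived graph,
\[
r_i(\Phi,g)=(\Phi^i,g)\ \ (i<n),\qquad r_n(\Phi,g)=\bigl(r_n\Phi,\ \xi(\Phi_{n-1})\,g\bigr),
\]
the verification then splits cleanly into connectivity (which I claim is equivalent to the generation condition) and fixed-point-freeness of the monodromies involving color~$n$ (equivalent to the second condition); the conditions not involving color~$n$ will be inherited from $\calK$.

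For connectivity, I would fix a base flag $\Phi_0$ and use the standard fact that, over a connected base graph, $\calK_{r_n}^\xi$ is connected if and only if the voltages of closed walks at $\Phi_0$ fill $G$. Since the color-$<n$ darts carry trivial voltage and $\calK$ is connected in those colors, for every facet $F\in(\calK)_{n-1}$ one can walk inside the copy of $\calK$ to a flag of $F$, cross a single color-$n$ edge (picking up voltage $\xi(F)$), and return through $\calK$ at no further cost; this is a closed walk of voltage $\xi(F)$. Conversely, every closed-walk voltage is a product of terms $\xi(F)^{\pm1}$. Hence the voltage subgroup equals $\langle\,\xi(F):F\in(\calK)_{n-1}\,\rangle$, and $\calK_{r_n}^\xi$ is connected if and only if $\xi(\calK_{n-1})$ generates $G$.

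For the fixed points, the monodromies $r_i$ $(i<n)$ and $r_ir_j$ $(i,j<n)$ act on the first coordinate exactly as in $\calK$ and fix the second, so they are fixed-point-free because $\calK$ is a maniplex. It remains to analyze $r_n$ and the products $r_nr_j$ (and their inverses $r_jr_n$, which have the same fixed-point set). Here $r_n(\Phi,g)=(\Phi,g)$ forces $r_n\Phi=\Phi$ and $\xi(\Phi_{n-1})=1$; since a facet $F$ is a connected $(n-1)$-maniplex, its automorphism group acts freely, so $r_n$ fixing one flag of $F$ forces $\restr{r_n}{F}=\id$. Thus $r_n$ is fixed-point-free iff every $F$ with $\restr{r_n}{F}=\id$ has $\xi(F)\neq1$. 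Next, $r_nr_j(\Phi,g)=\bigl(r_nr_j\Phi,\ \xi((\Phi^j)_{n-1})\,g\bigr)$, and a fixed point requires $r_nr_j\Phi=\Phi$, i.e. $r_n\Phi=\Phi^j$ (so $r_n\Phi$ is $j$-adjacent to $\Phi$), together with $\xi((\Phi^j)_{n-1})=1$. When $r_n\Phi=\Phi^j$ the facet $(\Phi^j)_{n-1}$ equals $r_n(\Phi_{n-1})$, so by the extender relation $\xi(r_n(F))=\xi(F)^{-1}$ the voltage condition collapses to $\xi(\Phi_{n-1})=1$ for every $j$. Collecting these, some monodromy involving color~$n$ has a fixed point precisely when there is a facet $F$ with $\xi(F)=1$ for which either $\restr{r_n}{F}=\id$ or some flag $\Phi\in F$ has $r_n\Phi$ adjacent to $\Phi$ — which is exactly the negation of the second condition.

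I expect the main obstacle to be the bookkeeping in the last paragraph. Two points need care: first, the $j=n-1$ case, where $\Phi^{n-1}$ leaves the facet $F$ and the naive voltage is $\xi(r_n(F))$ rather than $\xi(F)$, so the relation $\xi(r_n(F))=\xi(F)^{-1}$ is essential to reduce everything to a single condition on $\xi(F)$; and second, keeping the $r_n$-fixed-point case ($\restr{r_n}{F}=\id$) genuinely separate from the $r_nr_j$-fixed-point case, since in a maniplex $\Phi$ is never adjacent to itself, so the two clauses really do correspond to the two distinct sources of fixed points. I would also double-check, for non-consecutive pairs $\{n,j\}$ with $j\le n-2$, that $r_nr_j$ yields genuine alternating $4$-cycles and not merely closed paths; this follows from $r_n$ commuting with $\langle r_0,\dots,r_{n-2}\rangle$ together with the fixed-point-freeness just established, and is already encoded in the premaniplex step.
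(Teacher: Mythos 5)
Your proof is correct and takes essentially the same approach as the paper's, whose entire proof is the two-sentence observation that condition (i) is equivalent to connectivity of $\calK_{r_n}^\xi$ and condition (ii) is equivalent to the absence of semi-edges and parallel edges (i.e., fixed-point-freeness of the monodromies involving color $n$). Your write-up simply supplies the detailed verification --- the explicit monodromy formulas, the closed-walk voltage argument for connectivity, and the case analysis for $r_n$ and $r_nr_j$ including the use of $\xi(r_n(F))=\xi(F)^{-1}$ when $j=n-1$ --- that the paper leaves to the reader.
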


    \begin{proof}
    The first condition ensures that the graph is connected, and the second condition ensures that $\calK_{r_n}^\xi$ has no semi-edges or parallel edges. 
    \end{proof}

    From now on, we will assume that our Cayley extenders satisfy the conditions of Proposition\nobreakspace \ref {prop:DerivedIsManiplex}.

    \begin{proposition}\label{prop:DerivedIsCayExt}
    A maniplex $\calM$ is a Cayley extension of $\calK$ by $G$ if and only if $\calM \cong \calK_{r_n}^\xi$ for some Cayley extender $(\calK, r_n, \xi)$ with voltage group $G$.
\end{proposition}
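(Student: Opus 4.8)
The plan is to prove the two implications separately. For the implication that a derived graph of a Cayley extender is a Cayley extension, I would first identify the facets of $\calK_{r_n}^\xi$: since every dart of color $i<n$ has trivial voltage, the color-$i$ dart at a vertex $(\Phi,g)$ ends at $(\Phi^i,g)$, so removing the color-$n$ darts decomposes the derived graph into the sets $F_g = \{(\Phi,g) : \Phi \in V(\calK)\}$, one for each $g \in G$, and the map $\Phi \mapsto (\Phi,g)$ is a color-preserving isomorphism $\calK \to F_g$; thus $\calK_{r_n}^\xi$ is an extension of $\calK$. Next I would exhibit the regular action: for $h \in G$ let $\hat h$ act by $(\Phi,g)\mapsto(\Phi,gh)$ on vertices and $(d,g)\mapsto(d,gh)$ on darts. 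A direct check against the rules $I(d,g)=(I(d),g)$ and $(d,g)^{-1}=(d^{-1},\xi(d)g)$ shows $\hat h$ is a color-preserving automorphism, that $h\mapsto\hat h$ is an injective homomorphism $G \to \Gamma(\calK_{r_n}^\xi)$, and that $F_g\hat h = F_{gh}$; hence $G$ acts on the facets as $G$ does on itself by right multiplication, which is regular. Since being a Cayley extension is invariant under isomorphism, this handles the `if' direction.

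For the converse, suppose $\calM$ is a Cayley extension of $\calK$ by $G$. Fix a base facet $F_{0}$ of $\calM$ together with a color-preserving isomorphism $\iota\colon \calK \to F_{0}$, and recall that $\Gamma(\calM)$, hence $G$, acts freely on the flags of the connected maniplex $\calM$. Because $G$ is transitive on facets, every facet is $F_{0}g$ for some $g$, and because the action on facets is free, $F_{0}g=F_{0}$ only when $g=1$; consequently each $G$-orbit of flags meets $F_{0}$ in exactly one flag, and every flag of $\calM$ can be written uniquely as $\iota(\Phi)g$ with $\Phi\in V(\calK)$ and $g\in G$. This gives a bijection $\Theta\colon (\Phi,g)\mapsto \iota(\Phi)g$ between $V(\calK)\times G$ and the flags of $\calM$, which will be the candidate isomorphism $\calK_{r_n}^\xi \to \calM$.

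The next step is to read off the extender from this bijection. For $i<n$ the color-$i$ neighbor of a flag stays inside its facet, so $\iota$ intertwines the color-$i$ adjacencies of $\calK$ and $F_{0}$, and $\Theta$ automatically respects all colors below $n$. For color $n$, given $\Phi\in V(\calK)$ the flag $\iota(\Phi)^{n}$ lies in a unique facet $F_{0}g_{\Phi}$; I would define $r_{n}\Phi\in V(\calK)$ by $\iota(r_{n}\Phi)=\iota(\Phi)^{n}g_{\Phi}^{-1}$ and set $\xi(F):=g_{\Phi}$ for $\Phi$ in the facet $F$ of $\calK$. The main obstacle is proving that $\xi$ is well defined, i.e.\ that $g_{\Phi}$ depends only on the facet of $\Phi$; here I would use that in any maniplex the monodromy $r_{n}$ commutes with $\gen{r_{0},\dots,r_{n-2}}$ (as $|n-i|>1$ for $i\le n-2$) and that monodromies commute with the automorphisms in $G$. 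Since the facets of $\calK$ are exactly the $\gen{r_{0},\dots,r_{n-2}}$-orbits, applying the color-$n$ adjacency to $\iota(\Phi)^{i}$ for $i\le n-2$ and comparing facets shows $g_{r_{i}\Phi}=g_{\Phi}$, giving well-definedness and simultaneously that $r_{n}$ commutes with $\gen{r_{0},\dots,r_{n-2}}$, so $\restr{r_{n}}{F}$ is an isomorphism onto $r_{n}(F)$.

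Finally I would verify the remaining axioms and the isomorphism. Applying the relation $r_n^2=1$ in $\calM$ to $\iota(\Phi)$ and using that the facets $F_{0}g$ are pairwise distinct forces both $r_{n}^{2}\Phi=\Phi$ and $g_{r_{n}\Phi}g_{\Phi}=1$, which is precisely $r_{n}^{2}=1$ together with $\xi(r_{n}(F))=\xi(F)^{-1}$; thus $(\calK,r_{n},\xi)$ is a Cayley extender with voltage group $G$. A short computation then checks that $\Theta$ sends the color-$i$ adjacency $(\Phi,g)^{i}=(\Phi^{i},g)$ to $(\iota(\Phi)g)^{i}$ for $i<n$, and the color-$n$ adjacency $(\Phi,g)^{n}=(r_{n}\Phi,\xi(F)g)$ to $\iota(\Phi)^{n}g=(\iota(\Phi)g)^{n}$, so $\Theta$ is a color-preserving isomorphism $\calK_{r_{n}}^{\xi}\cong\calM$ conjugating the action of $\hat G$ to that of $G$. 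Since $\calM$ is a maniplex, Proposition\nobreakspace\ref{prop:DerivedIsManiplex} guarantees that this extender meets the standing conditions, completing the proof.
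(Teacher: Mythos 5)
Your proof is correct and follows essentially the same route as the paper: in one direction the facets of the derived graph are the fibers $\calK\times\{g\}$ with $G$ acting regularly by right multiplication, and in the other the flags of $\calM$ are identified with $V(\calK)\times G$ via a base facet, with $r_n$ and $\xi$ read off from how that facet is glued to its $G$-translates. The only difference is presentational: where the paper invokes the quotient $\calM/G \cong \calK_{r_n}$ and leaves the verifications implicit, you unpack the same identification into an explicit bijection $\Theta$ and check well-definedness of $\xi$, the extender axioms, and the isomorphism in detail.
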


    \begin{proof}
First, suppose that $\calM$ is a Cayley extension of $\calK$ by $G$, where $\calK$ is an $n$-maniplex.
    In other words, $G$ acts regularly on the facets of $\calM$.
    Let $\calK$ be a particular facet of $\calM$.
    Then the quotient $\calM/G$ is isomorphic to a pre-maniplex $\calK_{r_{n}}$; a copy of $\calK$ with added edges of color $n$.
    For each facet $F$ of $\calK$ (which is a subfacet of $\calM$) let $\xi(F)$ be the automorphism in $G$ that maps $\calK$ to the facet that shares $F$ with it.
    Then $\calM$ is isomorphic to $\calK_{r_n}^\xi$.

    Conversely, suppose that $\calM \cong \calK_{r_n}^\xi$.
    Note that the facets of $\calM$ are of the form $(\calK,\gamma):=\{(\Phi,\gamma):\Phi\in\calK\}$ for some $\gamma \in G$, each of which is isomorphic to $\calK$. Note also that an element $\alpha\in G$ maps the facet $(\calK,\gamma)$ to the facet $(\calK,\gamma\alpha)$.
    From this it follows immediately that $G$ acts regularly on the facets of $\calK_{r_n}^\xi$, and so $\calM$ is a Cayley extension of $\calK$ by $G$.
    \end{proof}

    A more intuitive way of constructing $\calK_{r_n}^\xi$ is the following:
    Let $\{(\calK,\gamma) : \gamma \in G\}$ be a family of copies of $\calK$.
    These will be the facets of $\calK_{r_{n}}^{\xi}$.
    Then, for each facet $F$ of $\calK$ and for each $\gamma\in G$, glue the copy of $F$ in $(\calK,\gamma)$ to the copy of $r_n(F)$ in $(\calK,\xi(F)\gamma)$ using $r_n$ as gluing instructions.
    Let us consider some simple examples.

    \begin{example}
    Let $\calM$ be the unique $1$-maniplex, which has 2 flags. Let us color one flag white and the other one black. If we add semi-edges of color 1, and give them involutory voltages $\alpha_1$ and $\alpha_2$, then the derived graph is the polygon $\{2k\}$, where $k$ is the order of $\alpha_1 \alpha_2$ (Figure\nobreakspace \ref {fig:1ext_A}). If we instead add a dart of color 1, with voltage $\alpha$, then the derived graph is the polygon $\{k\}$, where $k$ is the order of $\alpha$ (Figure\nobreakspace \ref {fig:1ext_B}).
    \end{example}

   \begin{figure}
	\centering
	\begin{subfigure}[b]{\textwidth}
		\centering
			\begin{scriptsize}
\def\svgwidth{.6\textwidth}
		\begingroup \makeatletter \providecommand\color[2][]{\errmessage{(Inkscape) Color is used for the text in Inkscape, but the package 'color.sty' is not loaded}\renewcommand\color[2][]{}}\providecommand\transparent[1]{\errmessage{(Inkscape) Transparency is used (non-zero) for the text in Inkscape, but the package 'transparent.sty' is not loaded}\renewcommand\transparent[1]{}}\providecommand\rotatebox[2]{#2}\newcommand*\fsize{\dimexpr\f@size pt\relax}\newcommand*\lineheight[1]{\fontsize{\fsize}{#1\fsize}\selectfont}\ifx\svgwidth\undefined \setlength{\unitlength}{791.8527486bp}\ifx\svgscale\undefined \relax \else \setlength{\unitlength}{\unitlength * \real{\svgscale}}\fi \else \setlength{\unitlength}{\svgwidth}\fi \global\let\svgwidth\undefined \global\let\svgscale\undefined \makeatother \begin{picture}(1,0.37461091)\lineheight{1}\setlength\tabcolsep{0pt}\put(0,0){\includegraphics[width=\unitlength,page=1]{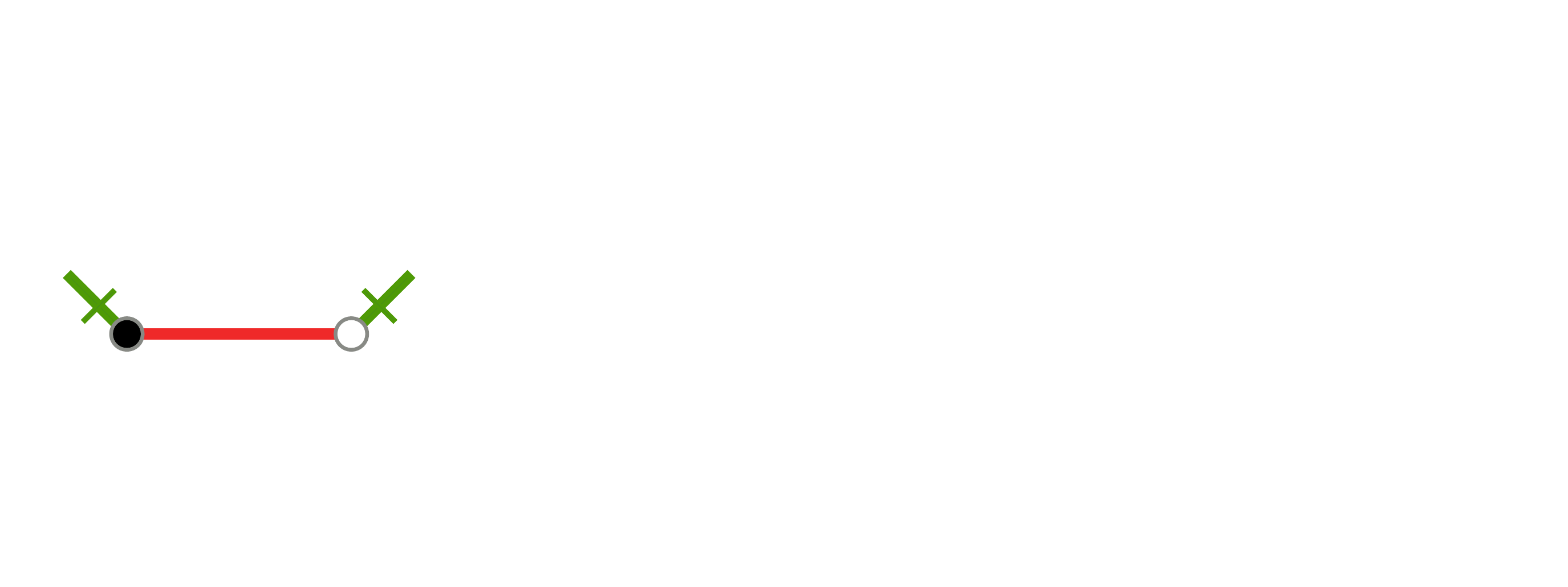}}\put(0.2538047,0.21742784){\color[rgb]{0.30588235,0.60392157,0.02352941}\makebox(0,0)[lt]{\lineheight{1.25}\smash{\begin{tabular}[t]{l}$\alpha_2$\end{tabular}}}}\put(0.04677434,0.21742784){\color[rgb]{0.30588235,0.60392157,0.02352941}\makebox(0,0)[rt]{\lineheight{1.25}\smash{\begin{tabular}[t]{r}$\alpha_1$\end{tabular}}}}\put(0,0){\includegraphics[width=\unitlength,page=2]{1ext_A.pdf}}\put(0.86028114,0.06570822){\makebox(0,0)[lt]{\lineheight{1.25}\smash{\begin{tabular}[t]{l}$\alpha_2$\end{tabular}}}}\put(0.91430413,0.19928398){\makebox(0,0)[lt]{\lineheight{1.25}\smash{\begin{tabular}[t]{l}$\alpha_1 \alpha_2$\end{tabular}}}}\put(0.86018025,0.33500445){\makebox(0,0)[lt]{\lineheight{1.25}\smash{\begin{tabular}[t]{l}$\alpha_2 \alpha_1 \alpha_2$\end{tabular}}}}\put(0.58667858,0.06570822){\makebox(0,0)[rt]{\lineheight{1.25}\smash{\begin{tabular}[t]{r}$\alpha_1$\end{tabular}}}}\put(0.53368097,0.19928398){\makebox(0,0)[rt]{\lineheight{1.25}\smash{\begin{tabular}[t]{r}$\alpha_2 \alpha_1$\end{tabular}}}}\put(0.58612558,0.33500445){\makebox(0,0)[rt]{\lineheight{1.25}\smash{\begin{tabular}[t]{r}$\alpha_1 \alpha_2 \alpha_1$\end{tabular}}}}\put(0.72606164,0.00065674){\makebox(0,0)[t]{\lineheight{1.25}\smash{\begin{tabular}[t]{c}$1$\end{tabular}}}}\put(0,0){\includegraphics[width=\unitlength,page=3]{1ext_A.pdf}}\end{picture}\endgroup  		\end{scriptsize}
		\caption{}\label{fig:1ext_A}
	\end{subfigure}

		\begin{subfigure}[b]{\textwidth}
		\centering
			\begin{scriptsize}
\def\svgwidth{.6\textwidth}
		\begingroup \makeatletter \providecommand\color[2][]{\errmessage{(Inkscape) Color is used for the text in Inkscape, but the package 'color.sty' is not loaded}\renewcommand\color[2][]{}}\providecommand\transparent[1]{\errmessage{(Inkscape) Transparency is used (non-zero) for the text in Inkscape, but the package 'transparent.sty' is not loaded}\renewcommand\transparent[1]{}}\providecommand\rotatebox[2]{#2}\newcommand*\fsize{\dimexpr\f@size pt\relax}\newcommand*\lineheight[1]{\fontsize{\fsize}{#1\fsize}\selectfont}\ifx\svgwidth\undefined \setlength{\unitlength}{791.8527486bp}\ifx\svgscale\undefined \relax \else \setlength{\unitlength}{\unitlength * \real{\svgscale}}\fi \else \setlength{\unitlength}{\svgwidth}\fi \global\let\svgwidth\undefined \global\let\svgscale\undefined \makeatother \begin{picture}(1,0.37461091)\lineheight{1}\setlength\tabcolsep{0pt}\put(0,0){\includegraphics[width=\unitlength,page=1]{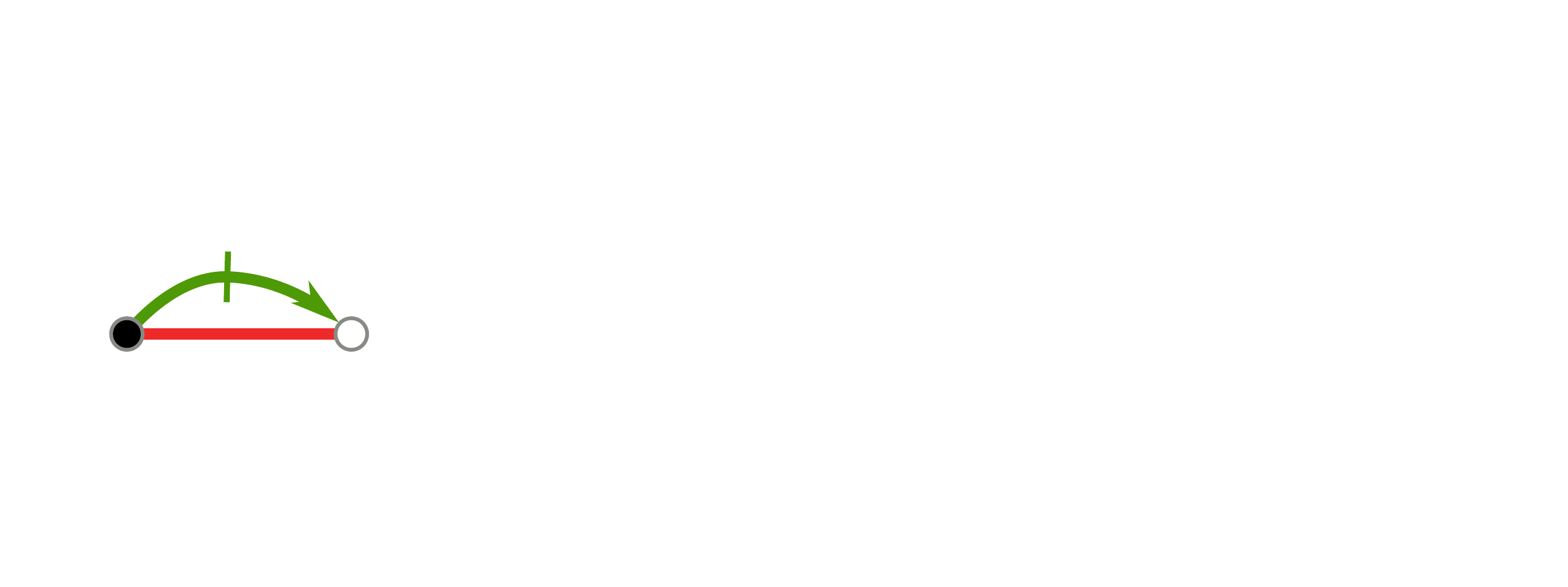}}\put(0.14556394,0.22299164){\color[rgb]{0.30588235,0.60392157,0.02352941}\makebox(0,0)[t]{\lineheight{1.25}\smash{\begin{tabular}[t]{c}$\alpha$\end{tabular}}}}\put(0,0){\includegraphics[width=\unitlength,page=2]{1ext_B.pdf}}\put(0.86028114,0.06570822){\makebox(0,0)[lt]{\lineheight{1.25}\smash{\begin{tabular}[t]{l}$\alpha^{-1}$\end{tabular}}}}\put(0.91430413,0.19928401){\makebox(0,0)[lt]{\lineheight{1.25}\smash{\begin{tabular}[t]{l}$\alpha^{-2}$\end{tabular}}}}\put(0.86018025,0.33500444){\makebox(0,0)[lt]{\lineheight{1.25}\smash{\begin{tabular}[t]{l}$\alpha^{-3}$\end{tabular}}}}\put(0.58667858,0.06570822){\makebox(0,0)[rt]{\lineheight{1.25}\smash{\begin{tabular}[t]{r}$\alpha$\end{tabular}}}}\put(0.53368097,0.19928401){\makebox(0,0)[rt]{\lineheight{1.25}\smash{\begin{tabular}[t]{r}$\alpha^2$\end{tabular}}}}\put(0.58612558,0.33500444){\makebox(0,0)[rt]{\lineheight{1.25}\smash{\begin{tabular}[t]{r}$\alpha^3$\end{tabular}}}}\put(0.72606164,0.00065671){\makebox(0,0)[t]{\lineheight{1.25}\smash{\begin{tabular}[t]{c}$1$\end{tabular}}}}\put(0,0){\includegraphics[width=\unitlength,page=3]{1ext_B.pdf}}\end{picture}\endgroup  		\end{scriptsize}
		\caption{}\label{fig:1ext_B}
	\end{subfigure}

	\caption{Two possible Cayley extensions of the $1$-maniplex. Red unmarked edges stand for $0$-adjacencies while green edges marked with a line stand for $1$-adjacencies.}
\end{figure}

        \begin{example}\label{eg:maps44}
    Let $\calK$ be a square, and let $G=\mathbb{Z}_{a}\times \mathbb{Z}_{b}$.
    Let $r_n$ act as a reflection in a vertical line for the vertical edges and a reflection in a horizontal line for the horizontal edges.
    Let $\xi(e)$ be $(-1,0)$ and $(1,0)$ for the vertical edges and $(0,-1)$ and $(0,1)$ for the horizontal edges.
    Equivalently, assign the voltage $(1,0)$ to each of the $n$-darts starting on a flag in  vertical edge on the right and the voltage $(0,1)$ to each of the $n$-darts starting on a flag in the horizontal edge on the top of the square.
    Then to build $\calK^\xi$, we take $a \times b$ copies of $\calK$, each one identified with an element of $\mathbb{Z}_a \times \mathbb{Z}_b$.
    Our definition of $r_n$ means that we glue the right edge of the square labeled $(x,y)$ to the left edge of the square labeled $(x+1,y)$, and the top edge of $(x,y)$ with the bottom edge of $(x,y+1)$.
    This gives us an $a \times b$ grid of squares glued together in a toroidal way, yielding the map $\{4,4\}_{(a,0),(0,b)}$. See Figure\nobreakspace \ref {fig:ext_44_ab}.
    \end{example}

    \begin{figure}
		\centering
			\begin{tiny}
\def\svgwidth{\textwidth}
		\begingroup \makeatletter \providecommand\color[2][]{\errmessage{(Inkscape) Color is used for the text in Inkscape, but the package 'color.sty' is not loaded}\renewcommand\color[2][]{}}\providecommand\transparent[1]{\errmessage{(Inkscape) Transparency is used (non-zero) for the text in Inkscape, but the package 'transparent.sty' is not loaded}\renewcommand\transparent[1]{}}\providecommand\rotatebox[2]{#2}\newcommand*\fsize{\dimexpr\f@size pt\relax}\newcommand*\lineheight[1]{\fontsize{\fsize}{#1\fsize}\selectfont}\ifx\svgwidth\undefined \setlength{\unitlength}{790.51042692bp}\ifx\svgscale\undefined \relax \else \setlength{\unitlength}{\unitlength * \real{\svgscale}}\fi \else \setlength{\unitlength}{\svgwidth}\fi \global\let\svgwidth\undefined \global\let\svgscale\undefined \makeatother \begin{picture}(1,0.45098141)\lineheight{1}\setlength\tabcolsep{0pt}\put(0,0){\includegraphics[width=\unitlength,page=1]{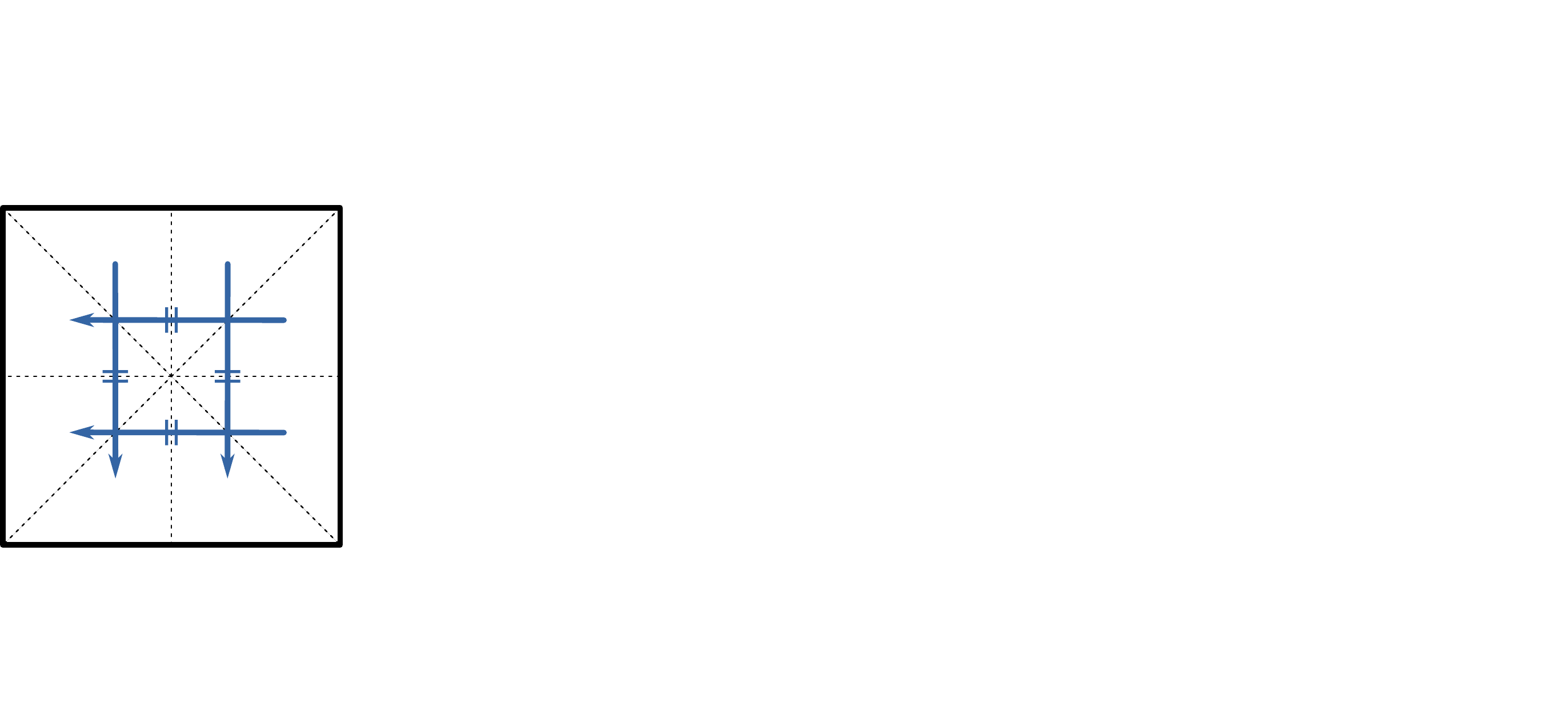}}\put(0.10926948,0.15456179){\color[rgb]{0.20392157,0.39607843,0.64313725}\makebox(0,0)[t]{\lineheight{1.25}\smash{\begin{tabular}[t]{c}$(1,0)$\end{tabular}}}}\put(0.10926948,0.26213706){\color[rgb]{0.20392157,0.39607843,0.64313725}\makebox(0,0)[t]{\lineheight{1.25}\smash{\begin{tabular}[t]{c}$(1,0)$\end{tabular}}}}\put(0,0){\includegraphics[width=\unitlength,page=2]{ext_44_ab.pdf}}\put(0.14840894,0.22627864){\color[rgb]{0.20392157,0.39607843,0.64313725}\makebox(0,0)[lt]{\lineheight{1.25}\smash{\begin{tabular}[t]{l}$(0,1)$\end{tabular}}}}\put(0.07038481,0.22627864){\color[rgb]{0.20392157,0.39607843,0.64313725}\makebox(0,0)[rt]{\lineheight{1.25}\smash{\begin{tabular}[t]{r}$(0,1)$\end{tabular}}}}\put(0,0){\includegraphics[width=\unitlength,page=3]{ext_44_ab.pdf}}\put(0.48588483,0.06744497){\color[rgb]{0,0,0}\makebox(0,0)[t]{\lineheight{1.25}\smash{\begin{tabular}[t]{c}$(0,0)$\end{tabular}}}}\put(0.5934601,0.06744497){\color[rgb]{0,0,0}\makebox(0,0)[t]{\lineheight{1.25}\smash{\begin{tabular}[t]{c}$(1,0)$\end{tabular}}}}\put(0.70103536,0.06744497){\color[rgb]{0,0,0}\makebox(0,0)[t]{\lineheight{1.25}\smash{\begin{tabular}[t]{c}$(2,0)$\end{tabular}}}}\put(0.77275215,0.06744497){\color[rgb]{0,0,0}\makebox(0,0)[t]{\lineheight{1.25}\smash{\begin{tabular}[t]{c}$(-2,0)$\end{tabular}}}}\put(0.88032742,0.06744497){\color[rgb]{0,0,0}\makebox(0,0)[t]{\lineheight{1.25}\smash{\begin{tabular}[t]{c}$(-1,0)$\end{tabular}}}}\put(0.96997353,0.01365733){\color[rgb]{0.36078431,0.20784314,0.4}\makebox(0,0)[lt]{\lineheight{1.25}\smash{\begin{tabular}[t]{l}$(a,0)$\end{tabular}}}}\put(0.88032742,0.17502023){\color[rgb]{0,0,0}\makebox(0,0)[t]{\lineheight{1.25}\smash{\begin{tabular}[t]{c}$(-1,1)$\end{tabular}}}}\put(0.48588483,0.17502023){\color[rgb]{0,0,0}\makebox(0,0)[t]{\lineheight{1.25}\smash{\begin{tabular}[t]{c}$(0,1)$\end{tabular}}}}\put(0.5934601,0.17502023){\color[rgb]{0,0,0}\makebox(0,0)[t]{\lineheight{1.25}\smash{\begin{tabular}[t]{c}$(1,1)$\end{tabular}}}}\put(0.48588483,0.24673708){\color[rgb]{0,0,0}\makebox(0,0)[t]{\lineheight{1.25}\smash{\begin{tabular}[t]{c}$(0,-2)$\end{tabular}}}}\put(0.5934601,0.24673713){\color[rgb]{0,0,0}\makebox(0,0)[t]{\lineheight{1.25}\smash{\begin{tabular}[t]{c}$(1,-2)$\end{tabular}}}}\put(0.88032742,0.24673708){\color[rgb]{0,0,0}\makebox(0,0)[t]{\lineheight{1.25}\smash{\begin{tabular}[t]{c}$(-1,-2)$\end{tabular}}}}\put(0.88032742,0.35431234){\color[rgb]{0,0,0}\makebox(0,0)[t]{\lineheight{1.25}\smash{\begin{tabular}[t]{c}$(-1,-1)$\end{tabular}}}}\put(0.77275215,0.35431234){\color[rgb]{0,0,0}\makebox(0,0)[t]{\lineheight{1.25}\smash{\begin{tabular}[t]{c}$(-2,-1)$\end{tabular}}}}\put(0.70103536,0.35431234){\color[rgb]{0,0,0}\makebox(0,0)[t]{\lineheight{1.25}\smash{\begin{tabular}[t]{c}$(2,-1)$\end{tabular}}}}\put(0.5934601,0.35431234){\color[rgb]{0,0,0}\makebox(0,0)[t]{\lineheight{1.25}\smash{\begin{tabular}[t]{c}$(1,-1)$\end{tabular}}}}\put(0.48588483,0.35431234){\color[rgb]{0,0,0}\makebox(0,0)[t]{\lineheight{1.25}\smash{\begin{tabular}[t]{c}$(0,-1)$\end{tabular}}}}\put(0.4320972,0.44395845){\color[rgb]{0.92941176,0.83137255,0}\makebox(0,0)[t]{\lineheight{1.25}\smash{\begin{tabular}[t]{c}$(0,b)$\end{tabular}}}}\put(0,0){\includegraphics[width=\unitlength,page=4]{ext_44_ab.pdf}}\end{picture}\endgroup  \end{tiny}
		\caption{The toroidal map $\left\{ 4,4 \right\}_{(a,0),(0,b)} $ as a Cayley extension. Red and green edges represent $0$- and $1$-adjacencies, respectively while blue edges marked with two lines stand for $2$-adjacencies. }\label{fig:ext_44_ab}
    \end{figure}

    \begin{example}\label{eg:toroids}
    It is not hard to see that Example\nobreakspace \ref {eg:maps44} extends to show that every $(n+1)$-toroid $\left\{ 4, 3^{n-2}, 4 \right\}_{\Lambda}$ (see \cite[Sec. 6D]{McMullenSchulte_2002_AbstractRegularPolytopes} and \cite{CollinsMontero_2021_EquivelarToroidsFew}) is a Cayley extension of the $n$-cube.
    We just need to use $\bZ^{n} / \Lambda$ as the voltage group, the reflection $R_{i}$ of the cube in direction $i$ as $r_{n+1}$ for the facets not fixed by such reflection and the projection of the vector $e_{i}$ of the standard basis of $\bZ^{n}$ into the voltage group as voltage for such facets.
    \end{example}

    A special case of Cayley extenders are those in which $r_n = Id$. We will call such an extender a \emph{canonical Cayley extender}, and will omit $r_n$ in the notation. If we say that $(\calK,\xi)$ is a canonical Cayley extender we are actually thinking about the Cayley extender $(\calK,Id,\xi)$.
    If $(\calK,\xi)$ is a canonical Cayley extender, we write $\calK^\xi$ instead of $\calK_{r_n}^\xi$ for the corresponding Cayley extension.

\begin{example} \label{eg:ditope}
	   For a maniplex $\calK$, let $G=\mathbb{Z}_2$ and let $\xi:(\calK)_{n-1}\to\mathbb{Z}_2$ be constant equal to 1. Then $\calK^\xi$ is the ditope $\{K,2\}$.
	   This is also often called the \emph{trivial extension} of $\calK$.
\end{example}

Now let us give an `external' characterization of canonical Cayley extensions.

\begin{theorem}\label{thm:charFAP}
An $(n+1)$-maniplex $\calM$ is isomorphic to $\calK^\xi$ for some canonical Cayley extender $(\calK, \xi)$ if and only if there is a group $G \leq \Gamma(\calM)$ acting regularly on the facets of $\calM$ such that for every flag $\Phi$ in $\calM$ there is an automorphism $\gamma \in G$ such that $\Phi^{n} = \Phi \gamma$.
\end{theorem}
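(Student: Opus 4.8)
The plan is to prove both implications by making explicit the Cayley extender $(\calK, r_n, \xi)$ produced by Proposition~\ref{prop:DerivedIsCayExt} and then analyzing exactly when its permutation $r_n$ can be taken to be the identity. The substance of the theorem is that the \emph{existence} hypothesis $\Phi^n = \Phi\gamma$ is strong enough, via regularity, to force $r_n = Id$.

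\emph{Forward direction.} Suppose $\calM \cong \calK^\xi$ for a canonical extender $(\calK, \xi)$. Being a Cayley extension, $G$ acts regularly on the facets of $\calM$ by (the easy half of) Proposition~\ref{prop:DerivedIsCayExt}, so it only remains to produce the required automorphism for each flag. Writing a flag as $(\Psi, g)$ with $\Psi$ a flag of $\calK$ and $g \in G$, the fact that $r_n = Id$ means its $n$-adjacent flag is $(\Psi, g)^n = (\Psi, \xi(F)g)$, where $F$ is the $(n-1)$-face of $\Psi$. Since $G$ acts by right multiplication on the second coordinate, I would simply verify that $\gamma := g^{-1}\xi(F)g \in G$ satisfies $(\Psi, g)\gamma = (\Psi, g\gamma) = (\Psi, \xi(F)g) = (\Psi, g)^n$. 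This is a one-line computation.

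\emph{Backward direction.} Assume $G \leq \Gamma(\calM)$ acts regularly on facets and that every flag $\Phi$ satisfies $\Phi^n = \Phi\gamma$ for some $\gamma \in G$. By Proposition~\ref{prop:DerivedIsCayExt}, fixing a base facet $\calK$ gives $\calM \cong \calK_{r_n}^\xi$, where for each $(n-1)$-face $F$ of $\calK$ the voltage $\xi(F)$ is the unique element of $G$ sending $\calK$ to the facet $\calK'$ sharing $F$ with it, and $r_n$ is recovered from $r_n\Psi = \Psi^n\xi(F)^{-1}$. I would argue that the hypothesis forces $r_n = Id$. Take a flag $\Psi$ of $\calK$ with $(n-1)$-face $F$. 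Its $n$-adjacent flag $\Psi^n$ lies in a common $(n-1)$-face with $\Psi$ (they are joined by a color-$n$ edge), hence in one of the two facets meeting along $F$; since $\Psi^n \neq \Psi$ in a maniplex, it is not in $\calK$, so $\Psi^n \in \calK' = \calK\,\xi(F)$. On the other hand the hypothesis gives $\Psi^n = \Psi\gamma_\Psi$, and $\Psi\gamma_\Psi$ lies in the facet $\calK\,\gamma_\Psi$; regularity of the $G$-action on facets then forces $\gamma_\Psi = \xi(F)$, so $\Psi^n = \Psi\,\xi(F)$. Comparing with $\Psi^n = (r_n\Psi)\,\xi(F)$ and cancelling the automorphism $\xi(F)$ yields $r_n\Psi = \Psi$. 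As $\Psi$ is arbitrary, $r_n = Id$, so $(\calK, \xi)$ is a canonical extender and $\calM \cong \calK^\xi$.

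The one genuine obstacle is the bookkeeping in the backward direction: one must correctly locate the facet containing $\Psi^n$ and exploit that the regular action pins down the relevant group element uniquely, which is precisely what upgrades the existence statement $\Phi^n = \Phi\gamma$ into the equality $\gamma_\Psi = \xi(F)$ that collapses $r_n$. A minor point to confirm en route is that $\Psi^n \neq \Psi$ (automatic in a maniplex), guaranteeing $\calK' \neq \calK$ and hence $\xi(F) \neq 1$, consistent with Proposition~\ref{prop:DerivedIsManiplex}; no further checking of the maniplex conditions is needed, since $\calM$ is already assumed to be a maniplex.
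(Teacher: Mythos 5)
Your overall route is the same as the paper's: the forward direction is the identical one-line computation with $\gamma = g^{-1}\xi(F)g$, and your backward direction, like the paper's, invokes Proposition~\ref{prop:DerivedIsCayExt} to write $\calM \cong \calK_{r_n}^\xi$ and then shows the hypothesis forces $r_n = Id$. The only difference is cosmetic: the paper compares first coordinates of $(\Psi,\alpha)^n = (\Psi,\alpha\gamma)$ directly in the derived graph, while you first pin down the group element via freeness of the facet action and then cancel it; this is the same argument in a different order.

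There is, however, one inference you rely on that is invalid as stated: ``since $\Psi^n \neq \Psi$ in a maniplex, it is not in $\calK$,'' repeated at the end as the claim that $\Psi^n \neq \Psi$ guarantees $\calK' \neq \calK$ and hence $\xi(F) \neq 1$. Distinct flags can lie in the same facet: in a general maniplex a facet may be glued to itself along a subfacet, which is precisely the situation Proposition~\ref{prop:DerivedIsManiplex} permits when $\xi(F) = 1$ but $\restr{r_n}{F} \neq Id$ (toroidal maps in which a face is adjacent to itself give concrete rank-$3$ instances). So the inequality of flags by itself tells you nothing about the facets being distinct, and $\xi(F)\neq 1$ is not automatic for Cayley extensions. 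Fortunately the step is unnecessary: in the construction of Proposition~\ref{prop:DerivedIsCayExt}, $\xi(F)$ is \emph{defined} as the element of $G$ carrying $\calK$ to the facet sharing $F$ with it, i.e., to the facet containing $\Psi^n$, whether or not that facet equals $\calK$. With that observation your freeness argument still yields $\gamma_\Psi = \xi(F)$, and the cancellation giving $r_n\Psi = \Psi$ goes through verbatim. (Alternatively, $\Psi^n \notin \calK$ is in fact true here, but as a consequence of the hypothesis rather than of maniplexhood: if $\Psi^n$ lay in $\calK$, the automorphism $\gamma$ with $\Psi^n = \Psi\gamma$ would stabilize the facet $\calK$, so freeness would give $\gamma = 1$ and $\Psi^n = \Psi$, a contradiction.) With this repair your proof is correct and essentially coincides with the paper's.
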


\begin{proof}
    Proposition\nobreakspace \ref {prop:DerivedIsCayExt} establishes that $\calM$ is a Cayley extension of $\calK$ if and only if $\calM \cong \calK_{r_n}^{\xi}$ for some Cayley extender $(\calK, r_n, \xi)$. If $\calM \cong \calK^\xi$, then the voltage group $G$ acts regularly on the facets of $\calM$.
    Now, a typical flag of $\calM$ has the form $(\Psi, \alpha)$ with $\Psi$ a flag of $\calK$ and $\alpha \in G$.
    Furthermore, $(\Psi, \alpha)^{n} = (\Psi, \xi(\Phi_{n}) \alpha) = (\Psi, \alpha)(\alpha^{-1} \xi(\Phi_{n}) \alpha)$, and so $\gamma = \alpha^{-1} \xi(\Phi_{n}) \alpha$ has the desired property.

    Conversely, suppose that $G$ acts regularly on the facets of $\calM$ and has the given property.
    Then the facets of $\calM$ are all isomorphic to some maniplex $\calK$, and by definition, $\calM$ is a Cayley extension of $\calK$ by $G$.
    Again, a typical flag of $\calM$ has the form $(\Psi, \alpha)$.
    By assumption, there is $\gamma \in G$ such that $(\Psi, \alpha)^{n} = (\Psi, \alpha) \gamma = (\Psi, \alpha \gamma)$.
    Since the first coordinate of $(\Psi, \alpha)^{n}$ is $r_{n} \Psi$, then $r_{n} \Psi = \Psi$, and since $\Psi$ was arbitrary, we get $r_n = Id$, which is what we wanted to prove.

\end{proof}

In light of Theorem\nobreakspace \ref {thm:charFAP}, we will say that $\calM$ is a \emph{canonical Cayley extension (of $\calK$)} if there is a subgroup $G \leq\Gamma(\calM)$ acting regularly on the facets of $\calM$ such that for every flag $\Phi$ in $\calM$ there is an automorphism $\gamma \in G$ such that $\Phi^{n-1} = \Phi \gamma$.

\begin{remark}\label{rem:chirNotCan}
Since chiral maniplexes do not have any automorphisms that send $\Phi$ to $\Phi^{n-1}$, it follows that no chiral maniplex is a canonical Cayley extension.
However, as pointed out before, all chiral toroidal maps of type $\{4,4\}$ are (non-canonical) Cayley extensions of the square (see Example\nobreakspace \ref {eg:toroids}).
\end{remark}

If $\calM$ is regular, then we know exactly which subgroup of $\Gamma(\calM)$ we need for Theorem\nobreakspace \ref {thm:charFAP}.

\begin{proposition} \label{prop:refl-canonical}
Let $\calM$ be a regular maniplex with base flag $\Phi$ and with automorphism group $\langle \rho_0, \ldots, \rho_{n-1} \rangle$. Let $N^{+}_{n-1}$ be the normal closure of $\langle \rho_{n-1} \rangle$ in $\Gamma(\calM)$.
Then $\calM$ is a canonical Cayley extension if and only if $N^{+}_{n-1}$ acts regularly on the facets of $\calM$.
\end{proposition}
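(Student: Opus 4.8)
The plan is to translate everything into the group $\Gamma := \Gamma(\calM)$ using the standard dictionary for regular maniplexes. First I would fix the base flag $\Phi$ and identify the flags of $\calM$ with the elements of $\Gamma$ via $\Phi\gamma \leftrightarrow \gamma$; this is a bijection because $\Gamma$ acts freely and transitively. Under this identification automorphisms act by right multiplication, while each monodromy $r_i$ acts by left multiplication by $\rho_i$, since $r_i(\Phi\gamma) = (r_i\Phi)\gamma = \Phi(\rho_i\gamma)$ (automorphisms commute with monodromies). Consequently the facets of $\calM$—the orbits of $\langle r_0, \dots, r_{n-2}\rangle$—correspond exactly to the left cosets $H\gamma$ of $H := \langle \rho_0, \dots, \rho_{n-2}\rangle$, and any subgroup $G \leq \Gamma$ acts on these cosets by right multiplication, $H\gamma \mapsto H\gamma g$.

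Next I would rewrite the defining condition of a canonical Cayley extension in these terms. For a flag $\Phi' = \Phi\delta$, the automorphism $\gamma$ satisfying $(\Phi')^{n-1} = \Phi'\gamma$ is uniquely determined (by freeness) and equals $\delta^{-1}\rho_{n-1}\delta$, because $(\Phi\delta)^{n-1} = r_{n-1}(\Phi\delta) = \Phi(\rho_{n-1}\delta)$. Hence the requirement that such a $\gamma$ lie in $G$ for \emph{every} flag is equivalent to saying that $G$ contains every conjugate of $\rho_{n-1}$, i.e.\ that $N := N^+_{n-1} \leq G$. So the statement ``$\calM$ is a canonical Cayley extension'' becomes: there exists $G \leq \Gamma$ with $N \leq G$ such that $G$ acts regularly on the facets.

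The crux is the observation that $N$ itself always acts transitively on the facets: being the normal closure of $\rho_{n-1}$, $N$ is normal in $\Gamma$, so $HN$ is a subgroup, and it contains $\rho_0, \dots, \rho_{n-2}$ (from $H$) together with $\rho_{n-1}$ (from $N$); thus $HN = \Gamma$, which is exactly transitivity of $N$ on the cosets $H\backslash\Gamma$. With this in hand both directions are short. For the forward direction, a witnessing $G \supseteq N$ acts regularly, hence freely; freeness is inherited by the subgroup $N$, and a free transitive action is regular, so $N$ acts regularly. For the converse, if $N$ acts regularly we simply take $G = N$, which contains $N$ and, by the translation above, certifies the canonical Cayley property. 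The main thing to get right—and the only real obstacle—is the bookkeeping of left versus right cosets and the commuting of the two actions; phrasing the forward implication through freeness rather than a counting argument has the bonus of avoiding any finiteness hypothesis on $\calM$.
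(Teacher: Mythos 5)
Your proof is correct and follows essentially the same route as the paper's: translate the flag condition $\Phi^{n-1} = \Phi\gamma$ into the requirement $N^{+}_{n-1} \leq G$, establish that $N^{+}_{n-1}$ acts transitively on the facets, and then pass freeness from the regularly-acting $G$ down to the subgroup $N^{+}_{n-1}$. The only cosmetic difference is that you prove transitivity algebraically (normality of $N^{+}_{n-1}$ gives $HN^{+}_{n-1} = \Gamma(\calM)$ for $H = \langle \rho_0, \ldots, \rho_{n-2}\rangle$), whereas the paper argues geometrically via adjacent facets and connectivity — the same fact in two dialects.
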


\begin{proof}
Since $\calM$ is regular, for each flag $\Psi$ there is an automorphism in $N^{+}_{n-1}$ that maps $\Psi$ to $\Psi^{n-1}$. So if $N^{+}_{n-1}$ acts regularly on the facets, then clearly $\calM$ is a canonical Cayley extension. Conversely, if $\calM$ is a canonical Cayley extension with $G$ acting regularly on the facets, then $G$ must contain $N^{+}_{n-1}$. Now, $N^{+}_{n-1}$ contains automorphisms that map each facet to each of its adjacent facets (that is, facets that share a subfacet), and by connectivity, $N^{+}_{n-1}$ acts transitively on the facets. It follows that $\Gamma = N^{+}_{n-1}$.
\end{proof}

Let us show that the notion of canonical Cayley extensions is a generalization of the Flat Amalgamation Property to general maniplexes \cite[Sec. 4E]{McMullenSchulte_2002_AbstractRegularPolytopes}.

\begin{proposition} \label{prop:fap-iff-canonical}
    Let $\calP$ be a regular polytope with facets isomorphic to $\calK$.
    Then $\calP$ satisfies the Flat Amalgamation Property with respect to its facets if and only if it is a canonical Cayley extension.
    \end{proposition}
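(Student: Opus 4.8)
The plan is to reduce \emph{both} properties to one and the same group-theoretic condition, namely the triviality of the intersection of the base-facet stabilizer with the normal closure $N^{+}_{n-1}$ of $\langle \rho_{n-1}\rangle$. Write $\calP$ as a regular polytope of rank $n$ with $\Gamma(\calP) = \langle \rho_0, \ldots, \rho_{n-1}\rangle$, so that its facets have rank $n-1$ and are isomorphic to $\calK$. In a regular polytope the facets correspond to the cosets of the stabilizer of the base facet, and this stabilizer is precisely the subgroup $H = \langle \rho_0, \ldots, \rho_{n-2}\rangle$, which is isomorphic to $\Gamma(\calK)$ via restriction. I will identify $\Gamma(\calK)$ with $H$ throughout and show that each of the two properties in the statement is equivalent to $H \cap N^{+}_{n-1} = 1$.

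First I would recall from \cite[Sec. 4E]{McMullenSchulte_2002_AbstractRegularPolytopes} the group-theoretic description of the Flat Amalgamation Property: $\calP$ satisfies the FAP with respect to its facets if and only if $\Gamma(\calP) = H \ltimes N^{+}_{n-1}$, i.e.\ $\Gamma(\calP) = H\,N^{+}_{n-1}$ together with $H \cap N^{+}_{n-1} = 1$. Since $N^{+}_{n-1}$ is normal and contains $\rho_{n-1}$, we automatically have $H\,N^{+}_{n-1} = \langle \rho_0, \ldots, \rho_{n-2}, \rho_{n-1}\rangle = \Gamma(\calP)$, so the product decomposition is free of charge and the FAP is equivalent to the single condition $H \cap N^{+}_{n-1} = 1$.

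Next I would invoke Proposition\nobreakspace \ref {prop:refl-canonical}, which states that $\calP$ is a canonical Cayley extension if and only if $N^{+}_{n-1}$ acts regularly on the facets of $\calP$. As recorded in the proof of that proposition, $N^{+}_{n-1}$ always acts \emph{transitively} on the facets: being normal it contains every conjugate of $\rho_{n-1}$, hence automorphisms sending each facet to each of its neighbours, and the facet-adjacency graph of a polytope is connected. Consequently the action is regular exactly when it is free, that is, when the stabilizer in $N^{+}_{n-1}$ of the base facet is trivial. But that stabilizer is $N^{+}_{n-1} \cap H$, because $H$ is the full stabilizer of the base facet in $\Gamma(\calP)$. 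Thus the canonical Cayley extension property is also equivalent to $H \cap N^{+}_{n-1} = 1$, and comparing with the previous paragraph finishes the proof.

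The genuinely load-bearing points are the two standard facts about regular polytopes used above: that the setwise stabilizer of the base facet is exactly $H = \langle \rho_0, \ldots, \rho_{n-2}\rangle$, and that $N^{+}_{n-1}$ is transitive on facets (the latter already established in the proof of Proposition\nobreakspace \ref {prop:refl-canonical}). I expect the only real obstacle to be citing the FAP in its semidirect-product form accurately and confirming that the convention for $N^{+}_{n-1}$ and the facet-defining color $n-1$ matches \cite{McMullenSchulte_2002_AbstractRegularPolytopes}; once the FAP is recast as the trivial-intersection condition $H \cap N^{+}_{n-1} = 1$, the equivalence with regularity of the $N^{+}_{n-1}$-action is essentially immediate.
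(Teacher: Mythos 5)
Your proof is correct and follows essentially the same route as the paper's: both invoke the semidirect-product characterization of the FAP, use the facts that $\Gamma_{n-1}=\langle\rho_0,\ldots,\rho_{n-2}\rangle$ is the base-facet stabilizer, that $\Gamma(\calP)=N^{+}_{n-1}\Gamma_{n-1}$ with $N^{+}_{n-1}$ normal and transitive on facets, and conclude that the FAP is equivalent to $N^{+}_{n-1}$ acting regularly on facets, at which point Proposition\nobreakspace \ref{prop:refl-canonical} finishes. Making the trivial intersection $\Gamma_{n-1}\cap N^{+}_{n-1}=1$ explicit as the common pivot is merely a more explicit phrasing of the same argument.
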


    \begin{proof}
    Let $\gen{\rho_0,\rho_1,\ldots,\rho_{n-1}}$ be the distinguished generators of   $\aut(\cP)$ with respect to a base flag $\Phi$.
    Then $\calP$ satisfies the Flat Amalgamation Property with respect to its facets if and only if $\Gamma(\calP)=N^{+}_{n-1}\rtimes\Gamma_{n-1}$ where $N^{+}_{n-1}$ is the normal closure of the group generated by $\rho_{n-1}$, and   $\Gamma_{n-1}$ is the stabilizer of the base facet $F=\Phi_{n-1}$.

    Now, note that $\Gamma_{n-1}$ is generated by $\rho_0,\rho_1,\ldots,\rho_{n-2}$ and that $N^{+}_{n-1}$ is generated by the elements of the form $\rho_{n-1}^\alpha=\alpha^{-1}\rho_{n-1}\alpha$. Note also that $\rho_{n-1}^\alpha$ maps $\Phi\alpha$ to $(\Phi\alpha)^{n-1}$.
    In general $\aut(\cP)=N^{+}_{n-1}\Gamma_{n-1}$ and $N^{+}_{n-1}$ is normal in $\aut(\cP)$. It is also true that $N^{+}_{n-1}$ acts transitively on the facets of $\calP$. So $\aut(\cP)=N^{+}_{n-1}\rtimes\Gamma_{n-1}$ if and only if $N^{+}_{n-1}$ acts freely (and thus regularly) on the facets of $\calP$.
    The result then follows from Proposition\nobreakspace \ref {prop:refl-canonical}.
    \end{proof}

    Recall that a maniplex $\calM$ is a \emph{coextension} of $\calK$ if its dual $\calM^*$ is an extension of $\calK^*$, or in other words, if all vertex figures of $\calM$ are isomorphic to $\calK$.

	    We can define a \emph{Cayley coextender} $(\calK,r_{-1},\xi)$ analogously to a Cayley extender by replacing the facets of $\calK$ by its $0$-faces (vertices) as the domain of $\xi$, and defining that $r_{-1}$ pairs 0-faces of $\calK$ by isomorphisms between the vertex figures.
    If  $(\calK,r_{-1},\xi)$ is a Cayley coextender, then the 1-skeleton of $\calK_{r_{-1}}^\xi$ is the Cayley graph of $G$ with respect to the set $\xi(\calK_0)$ (where $\calK_0$ denotes the set of vertices of $\calK$).
    In analogy with canonical Cayley extenders, we define a \emph{canonical Cayley coextender} to be the Cayley coextender with $r_{-1} = Id$. As a mild abuse of notation, we write $\calK^\xi$ instead
    of writing $\calK_{r_{-1}}^\xi$.
Then, to know whether $\calK^\xi$ denotes an extension or a coextension of $\calK$, one only needs to know whether the domain of $\xi$ is the facets or the 0-faces of $\calK$.

\begin{example}
	    Let $G$ be a group generated by a (multi-)set $S=\{s_0,s_1,\ldots,s_{n-1}\}$ of $n$ involutions and let $\calK $ be an $n$-simplex. Number the vertices of $\calK$ using $\{0, \ldots, n-1\}$ and let $\xi$ assign $s_i$ to vertex $i$ (in this case $(\calK,\xi)$ is a canonical Cayley coextender). Then $\calK^\xi$ is the colorful polytope obtained from the Cayley graph of $\Gamma$ with respect to $S$ (see \cite{AraujoPardoHubardOliverosSchulte_2013_ColorfulPolytopesGraphs}).
	\end{example}

\subsection{Polytopality of Cayley extensions} \label{sec:polytopality}
    Of course, a natural question when building maniplexes is to determine whether or not they are (flag graphs of) abstract polytopes.
    The maniplexes that are polytopes are precisely those that satisfy the \emph{path intersection property} (see \cite{GarzaVargasHubard_2018_PolytopalityManiplexes}).
The following result is a voltage version of such result.

    \begin{theorem} \label{thm:IntProp}
(\cite[Theorem 4.2]{HubardMochan_AllPolytopesAre_preprint})
   Let $\cX$ be an $n$-premaniplex and let $\xi:\fg(\cX)\to G$ be a voltage assignment such that $\cX^\xi$ is a maniplex. Then $\cX^\xi$ is the flag graph of a polytope if and only if
  \begin{equation}\label{eq:VolPIP}
    \xi(\fg^{x,y}_{[k,n-1]}(\cX)) \cap \xi(\fg^{x,y}_{[0,m]}(\cX)) = \xi(\fg^{x,y}_{[k,m]}(\cX)),
  \end{equation}
  for all $k,m\in \{0,\ldots,n-1\}$ and all vertices $x,y$ in $\cX$.
\end{theorem}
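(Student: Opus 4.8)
The plan is to reduce the statement to the path intersection property (PIP) for ordinary maniplexes established in \cite{GarzaVargasHubard_2018_PolytopalityManiplexes}, and then to transport that property across the covering $\cX^\xi \to \cX$ by means of voltages. Recall that a maniplex $\cM$ of rank $n$ is the flag graph of a polytope precisely when, for all flags $\Phi,\Psi$ of $\cM$ and all $k,m \in \{0,\dots,n-1\}$, the flags $\Phi$ and $\Psi$ lie in a common $[k,m]$-section whenever they lie both in a common $[0,m]$-section and in a common $[k,n-1]$-section. Since lying in a common $I$-section means being joined by a path all of whose colors lie in $I$, this is the assertion that $\fg^{\Phi,\Psi}_{[0,m]}(\cM)$ and $\fg^{\Phi,\Psi}_{[k,n-1]}(\cM)$ are both nonempty only if $\fg^{\Phi,\Psi}_{[k,m]}(\cM)$ is nonempty; the converse implication is automatic from $[k,m]\subseteq [0,m]\cap[k,n-1]$. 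I would apply this criterion to $\cM = \cX^\xi$, which is legitimate because $\cX^\xi$ is a maniplex by hypothesis.

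The central computation is to describe the color-restricted sections of the derived graph $\cX^\xi$ in terms of voltages. The flags of $\cX^\xi$ are the pairs $(x,g)$ with $x\in V(\cX)$ and $g\in G$, the coloring of $\cX^\xi$ is inherited dart-by-dart from $\cX$, and lifting a path $W$ from $x$ to $y$ in $\cX$ carries the flag $(x,g)$ to $(y,\xi(W)g)$. Hence, for a color set $I$, two flags $(x,g)$ and $(y,h)$ lie in a common $I$-section of $\cX^\xi$ if and only if there is a path $W$ from $x$ to $y$ using only colors in $I$ with $\xi(W) = h g^{-1}$; equivalently, if and only if $hg^{-1} \in \xi(\fg^{x,y}_I(\cX))$. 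This is the key dictionary, and establishing it carefully is where the main work lies: one must check that removing the colors outside $I$ commutes with the projection $\cX^\xi \to \cX$, so that $I$-sections upstairs are exactly the lifts of $I$-sections downstairs, and that $\xi$ (being well defined on maniplex-homotopy classes, as required for $\cX^\xi$ to be a maniplex) detects connectivity in the fiber precisely through the image $\xi(\fg^{x,y}_I(\cX))$.

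With the dictionary in hand the equivalence is formal. Applying the maniplex PIP to $\cX^\xi$ with $I = [0,m]$, $[k,n-1]$, and $[k,m]$ says exactly that for all $x,y$ and all $g,h\in G$,
\[
hg^{-1} \in \xi(\fg^{x,y}_{[0,m]}(\cX)) \cap \xi(\fg^{x,y}_{[k,n-1]}(\cX)) \implies hg^{-1}\in \xi(\fg^{x,y}_{[k,m]}(\cX)).
\]
For fixed $x,y$ the element $hg^{-1}$ ranges over all of $G$ as $g,h$ do (take $g=1$), so this implication is equivalent to the inclusion $\xi(\fg^{x,y}_{[0,m]}(\cX)) \cap \xi(\fg^{x,y}_{[k,n-1]}(\cX)) \subseteq \xi(\fg^{x,y}_{[k,m]}(\cX))$. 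The reverse inclusion is unconditional, since $[k,m]\subseteq [0,m]$ and $[k,m]\subseteq [k,n-1]$ give $\fg^{x,y}_{[k,m]}(\cX)\subseteq \fg^{x,y}_{[0,m]}(\cX)$ and $\fg^{x,y}_{[k,m]}(\cX)\subseteq \fg^{x,y}_{[k,n-1]}(\cX)$, and hence the corresponding containment of voltage images. Thus the maniplex PIP for $\cX^\xi$ is equivalent to equation \eqref{eq:VolPIP} holding for all $k,m$ and all $x,y$, and the criterion of \cite{GarzaVargasHubard_2018_PolytopalityManiplexes} then identifies this with $\cX^\xi$ being the flag graph of a polytope.

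I expect the only genuine obstacle to be the section dictionary of the second paragraph; once it is pinned down, the rest is a transcription of the maniplex PIP together with elementary set manipulation. In executing the dictionary I would also handle the degenerate cases $k>m$ separately: there $[k,m]=\emptyset$, so $\fg^{x,y}_{[k,m]}(\cX)$ is nonempty only when $x=y$, in which case it consists of the trivial class with voltage $1$, and one checks that \eqref{eq:VolPIP} then correctly encodes that a flag of $\cX^\xi$ meeting both a $[0,m]$-section and a $[k,n-1]$-section only trivially must coincide with itself.
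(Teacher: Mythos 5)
Your proposal is correct, but there is nothing in the paper to compare it against: the paper does not prove this statement, it imports it wholesale as \cite[Theorem 4.2]{HubardMochan_AllPolytopesAre_preprint}. So your argument stands or falls on its own, and it stands. The two ingredients are exactly right: (a) the Garza-Vargas--Hubard characterization of polytopal maniplexes by the path intersection property for interval color sets, and (b) the dictionary that, with the paper's conventions (a dart $(d,g)$ of $\cX^\xi$ runs from $(I(d),g)$ to the endpoint $(T(d),\xi(d)g)$, and $\xi(d_1\cdots d_k)=\xi(d_k)\cdots\xi(d_1)$), the lift of a path $W$ from $x$ to $y$ starting at $(x,g)$ ends at $(y,\xi(W)g)$, so $(x,g)$ and $(y,h)$ share an $I$-section if and only if $hg^{-1}\in\xi(\fg^{x,y}_I(\cX))$. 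You also correctly identify that one inclusion in \eqref{eq:VolPIP} is automatic from $[k,m]\subseteq[0,m]\cap[k,n-1]$, that surjectivity of $(g,h)\mapsto hg^{-1}$ converts the elementwise implication into the set inclusion, and that the degenerate case $k>m$ (where $\fg^{x,y}_{[k,m]}$ is nonempty only for $x=y$, with voltage $1$) encodes the fact that flags of a polytope are determined by their faces. This is presumably also the route taken in the cited preprint, since the statement is manifestly the voltage-translated form of the path intersection property; but within the present paper your write-up is the only proof on the table, and it is a sound one.
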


    \begin{proposition}\label{prop:CayleyExtPoly}
	  Let $(\calK,r_n,\xi)$ be a Cayley extender. Let  $\calK_{r_n}$ be the premaniplex obtained from adding edges of color $n$ according to $r_n$. Then $\calK_{r_n}^\xi$ is a polytope if and only if the following two conditions hold: \begin{enumerate}
	    \item  $\calK$ is a polytope, and
	    \item if $\{W\in \fg^{\Phi,\Psi}_{[k,n]}(\cK_{r_{n}}) : \xi(W)=1\}$ is non-empty, then $\fg^{\Phi,\Psi}_{[k,n-1]} \left( \cK_{r_{n}} \right)$ is also non-empty.
	  \end{enumerate}

	\end{proposition}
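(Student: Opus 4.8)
The plan is to apply Theorem~\ref{thm:IntProp} to the $(n+1)$-premaniplex $\cK_{r_n}$---whose derived graph is a maniplex by the standing assumption that Cayley extenders satisfy Proposition~\ref{prop:DerivedIsManiplex}---and to translate the resulting voltage intersection property, case by case, into conditions (i) and (ii). Throughout I would use the defining feature of the voltage coming from a Cayley extender: every dart of color strictly less than $n$ has trivial voltage, so $\xi$ is trivial on every class in $\fg^{x,y}_{[0,m]}(\cK_{r_n})$ and $\fg^{x,y}_{[k,m]}(\cK_{r_n})$ whenever $m\le n-1$. I also use that deleting the color-$n$ edges from $\cK_{r_n}$ returns exactly $\cK$, which is connected; hence $\fg^{x,y}_{[0,n-1]}(\cK_{r_n})=\fg^{x,y}(\cK)\ne\emptyset$ for all flags $x,y$, and more generally $\fg^{x,y}_{[k,m]}(\cK_{r_n})=\fg^{x,y}_{[k,m]}(\cK)$ for $m\le n-1$.

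First I would simplify Equation~\eqref{eq:VolPIP} (with $n$ replaced by $n+1$, so that $k,m$ range over $\{0,\dots,n\}$ and the top color is $n$). The cases $k=0$ and $m=n$ are automatic, since then one of the two index sets on the left equals $[0,n]$ and the intersection collapses to the right-hand side. For the remaining cases $1\le k\le n$ and $0\le m\le n-1$, both $\xi(\fg^{x,y}_{[0,m]})$ and $\xi(\fg^{x,y}_{[k,m]})$ are subsets of $\{1\}$, so the only content of \eqref{eq:VolPIP} is the forward implication
\[
1\in\xi\bigl(\fg^{x,y}_{[k,n]}(\cK_{r_n})\bigr)\ \text{and}\ \fg^{x,y}_{[0,m]}(\cK_{r_n})\ne\emptyset \ \Longrightarrow\ \fg^{x,y}_{[k,m]}(\cK_{r_n})\ne\emptyset;
\]
the reverse implication holds trivially because a $[k,m]$-path is simultaneously a $[k,n]$-path of trivial voltage and a $[0,m]$-path. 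Thus $\cK_{r_n}^\xi$ is a polytope if and only if this implication holds for all such $k,m$ and all $x,y$.

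For the direction assuming (i) and (ii), I would take $k,m,x,y$ as above with $1\in\xi(\fg^{x,y}_{[k,n]})$ and $\fg^{x,y}_{[0,m]}\ne\emptyset$. Condition (ii) upgrades the trivial-voltage $[k,n]$-path to a genuine $[k,n-1]$-path, i.e.\ $\fg^{x,y}_{[k,n-1]}(\cK)\ne\emptyset$; when $k=n$ this already forces $x=y$ (since $[n,n-1]=\emptyset$), which is exactly the required conclusion. When $k\le n-1$, I then feed $\fg^{x,y}_{[k,n-1]}(\cK)\ne\emptyset$ and $\fg^{x,y}_{[0,m]}(\cK)\ne\emptyset$ into the path-intersection property of $\cK$ (condition (i), the trivial-group special case of Theorem~\ref{thm:IntProp}) to obtain $\fg^{x,y}_{[k,m]}(\cK)=\fg^{x,y}_{[k,m]}(\cK_{r_n})\ne\emptyset$, as desired.

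Conversely, assuming the displayed implication for all $k,m,x,y$, I recover (ii) by specializing to $m=n-1$: there $\fg^{x,y}_{[0,n-1]}(\cK_{r_n})\ne\emptyset$ automatically by connectivity of $\cK$, so $1\in\xi(\fg^{x,y}_{[k,n]})$ already yields $\fg^{x,y}_{[k,n-1]}\ne\emptyset$. To recover (i), I would note that for $1\le k\le n-1$ a nonempty $\fg^{x,y}_{[k,n-1]}(\cK)$ provides a trivial-voltage element of $\fg^{x,y}_{[k,n]}$, so the implication with this $k$ and the given $m$ reproduces the path-intersection property of $\cK$ (its $k=0$ case being trivial, again by connectivity). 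The main obstacle is the careful case bookkeeping at the boundary---isolating the automatic cases $k=0$ and $m=n$, treating the degenerate top case $k=n$ where $[k,m]$ and $[k,n-1]$ become empty, and consistently exploiting that color-$n$ darts are the only carriers of voltage while the color-$(<n)$ part of $\cK_{r_n}$ is a connected copy of $\cK$. Once these are organized, each implication is a short application of Theorem~\ref{thm:IntProp} and the path-intersection property.
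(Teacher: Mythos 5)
Your proof is correct and takes essentially the same route as the paper's: both apply Theorem\nobreakspace \ref{thm:IntProp} to $(\cK_{r_n},\xi)$, reduce to the cases $k\geq 1$ and $m\leq n-1$ (where every class in $\fg_{[0,m]}$ and $\fg_{[k,m]}$ has trivial voltage), use condition (ii) to upgrade a trivial-voltage $[k,n]$-path to a $[k,n-1]$-path, and then invoke the path-intersection property of $\cK$ to produce the required $[k,m]$-path. The only (harmless) deviations are that you recover condition (i) from the intersection property itself rather than from the observation that $\cK$ is a facet of the polytope $\cK_{r_n}^\xi$, and that you spell out the boundary cases ($k=0$, $m=n$, $k=n$) that the paper dispatches with ``without loss of generality.''
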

	\begin{proof}
	Assume that $\cK_{r_{n}}^{\xi}$ is a polytope. Obviously $\calK$ is a polytope since the facets of $\calK^\xi_{r_{n}}$ are isomorphic to $\calK$.
	The second condition is a consequence of Theorem\nobreakspace \ref {thm:IntProp}.
	Indeed, since $\cK$ is connected, then there exists a path $V$ in $\cK$ connecting $\Phi$ and $\Psi$. The path $V$ can be thought as a path in $\fg_{[0,n-1]}^{\Phi,\Psi}\left( \cK_{r_{n}} \right)$.
	Observe that $\xi(V) = 1$, hence Equation\nobreakspace \textup {(\ref {eq:VolPIP})} implies that
	\[\xi\left( \fg_{[k,n-1]}^{\Phi,\Psi} \left( \cK_{r_{n}} \right) \right) = \xi\left( \fg_{[k,n]}^{\Phi,\Psi} \left( \cK_{r_{n}} \right) \right) \cap \xi\left( \fg_{[0,n-1]}^{\Phi,\Psi} \left( \cK_{r_{n}} \right) \right) \neq \emptyset,\]
	 which in turn implies that $\fg_{[k,n-1]}^{\Phi,\Psi}\left( \cK_{r_{n}} \right) $ is non-empty.

	Suppose $\calK$ is a polytope and that the second condition holds.
	Let $\Phi,\Psi$ be flags in $\calK_{r_n}$ and let $W\in\Pi^{\Phi,\Psi}_{[0,m]}\left( \cK_{r_{n}} \right)$ and $V\in \Pi^{\Phi,\Psi}_{[k,n]}\left( \cK_{r_{n}} \right)$.
	Suppose $\xi(W)=\xi(V)$.
	Without loss of generality we can assume $k>0$ and $m<n$, so this means that $\xi(W)=\xi(V)=1$.
	Because of our hypothesis, there is a path $W'\in\Pi^{\Phi,\Psi}_{[k,n-1]}\left( \cK_{r_{n}} \right)$.
	Since $\calK$ is a polytope, from $V$ and $W'$ we get a path $Z\in \Pi^{\Phi,\Psi}_{[k,m]} \left( \cK_{r_{n}} \right)$. Trivially $\xi(Z)=\xi(V)=\xi(W)=1$, so by Theorem\nobreakspace \ref {thm:IntProp}, we get that $\calK_{r_n}^\xi$ is polytopal.
	\end{proof}

	\begin{corollary}\label{coro:FAP-poly}
	Let $(\calK,\xi)$ be a canonical Cayley extender. Then $\calK^\xi$ is a polytope if and only if $\calK$ is a polytope.
	\end{corollary}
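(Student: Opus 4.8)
The plan is to derive this corollary from Proposition \ref{prop:CayleyExtPoly} by specializing to the canonical case $r_n = Id$. Since a canonical Cayley extender is by definition the Cayley extender $(\calK, Id, \xi)$, the polytopality criterion of Proposition \ref{prop:CayleyExtPoly} applies directly, and it suffices to show that its second condition becomes automatic when $r_n = Id$. The forward implication is immediate: if $\calK^\xi$ is a polytope, then its facets are isomorphic to $\calK$, so $\calK$ must be a polytope (this is already noted in the proof of Proposition \ref{prop:CayleyExtPoly}).

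For the converse, assume $\calK$ is a polytope; I would verify condition (ii) of Proposition \ref{prop:CayleyExtPoly} unconditionally. The key observation is that when $r_n = Id$, the monodromy $r_n$ fixes every flag of $\calK$, so in $\calK^{Id}_{r_n}$ the color-$n$ edges are all semiedges attached at each flag. The crucial structural consequence is the following: any path in $\calK^{Id}$ can have its color-$n$ steps freely cancelled or rearranged at the level of the \emph{underlying flag}, because each color-$n$ dart returns to the flag where it started. Concretely, given a path $W \in \fg^{\Phi,\Psi}_{[k,n]}(\calK_{r_n})$, I would argue that deleting all the color-$n$ darts from $W$ yields a path $W'$ that still runs from $\Phi$ to $\Psi$ and uses only colors in $[k, n-1]$. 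This works precisely because each omitted color-$n$ dart, being a semiedge loop at its flag, does not change the flag reached; hence the remaining colors-$[k,n-1]$ darts already connect $\Phi$ to $\Psi$. This shows $\fg^{\Phi,\Psi}_{[k,n-1]}(\calK_{r_n})$ is non-empty whenever $\fg^{\Phi,\Psi}_{[k,n]}(\calK_{r_n})$ is — and in particular whenever the subset of the latter with trivial voltage is non-empty — so condition (ii) holds automatically, with no reference to $\xi$ at all.

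With both conditions of Proposition \ref{prop:CayleyExtPoly} reduced to ``$\calK$ is a polytope'', the equivalence follows. The main subtlety to handle carefully is the claim that deleting color-$n$ darts preserves the endpoints of the path: I would want to confirm that $r_n = Id$ forces the color-$n$ adjacency in $\calK_{r_n}$ to fix each flag (equivalently, that $\Phi^n = r_n\Phi = \Phi$ for every flag $\Phi$), so that each color-$n$ dart is a semiedge and its removal is an endpoint-preserving operation on the underlying sequence of flags. Once that is established the argument is essentially bookkeeping. I do not expect a serious obstacle here; the only thing to be careful about is that the deletion is performed on the \emph{flag-level} path and that the resulting shorter sequence is genuinely a legal path in $\calK$ (which it is, since consecutive non-$n$ darts in $W$ already traverse $\calK$, and the interposed fixed-point semiedges can be excised without breaking adjacency).
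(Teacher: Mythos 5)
Your proof is correct and follows exactly the paper's own argument: the paper likewise derives the corollary from Proposition~\ref{prop:CayleyExtPoly}, noting that when $r_n = Id$ the color-$n$ edges are semiedges, so deleting them from any path from $\Phi$ to $\Psi$ yields another such path, making condition (ii) automatic. Your write-up just spells out the endpoint-preservation detail that the paper leaves implicit.
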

	\begin{proof}
	Use the previous theorem, considering the fact that if we delete the edges of color $n$ of a path from $\Phi$ to $\Psi$ then we obtain another such path.
	\end{proof}

Next, let us show that a group epimorphism from the voltage group of a Cayley extender naturally induces a covering of maniplexes.

	\begin{proposition}\label{prop:covers_in_Cayley}
	    Let $(\calK,r_n,\xi)$ be a Cayley extender with $\xi:(\calK)_{n-1}\to G$. Let $\pi:G \to H$ be a group epimorphism and let $\xi':(\calK)_{n-1}\to H$ be defined by $\xi'(F)=(\xi(F))\pi$.
Then $\calK_{r_n}^\xi$ covers $\calK_{r_n}^{\xi'}$.
	\end{proposition}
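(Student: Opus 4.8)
The plan is to write down the one obvious candidate covering map and verify directly from the construction of derived graphs that it works. Both $\calK_{r_n}^\xi$ and $\calK_{r_n}^{\xi'}$ are derived graphs built on the \emph{same} underlying premaniplex $\calK_{r_n}$, differing only in the voltage group ($G$ versus $H$) and in the voltage values. Accordingly I would define $f$ on vertices by $(\Phi,g)\mapsto(\Phi,g\pi)$ and on darts by $(d,g)\mapsto(d,g\pi)$, applying $\pi$ only to the group coordinate and leaving the $\calK_{r_n}$-coordinate untouched.

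First I would record the single identity that makes everything go through: for every dart $d$ of $\calK_{r_n}$ one has $\xi'(d)=(\xi(d))\pi$. Indeed, if $d$ has color $n$ then $\xi'(d)=\xi'(I(d)_{n-1})=(\xi(I(d)_{n-1}))\pi=(\xi(d))\pi$ by the definition of $\xi'$, while if $d$ has color less than $n$ then both $\xi(d)$ and $\xi'(d)$ are trivial and the identity still holds, since $\pi$ carries the identity to the identity.

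Next I would check that $f$ is a homomorphism of premaniplexes, i.e. a color-preserving graph homomorphism. Colors are preserved automatically, since the color of a dart $(d,g)$ is the color of $d$ and $f$ fixes the first coordinate. For the initial-vertex axiom, $(d,g)$ has initial vertex $(I(d),g)$, whose image is $(I(d),g\pi)$, which is exactly the initial vertex of $f(d,g)=(d,g\pi)$. For the inverse-dart axiom, the inverse of $(d,g)$ in $\calK_{r_n}^\xi$ is $(d^{-1},\xi(d)g)$, whose image under $f$ is $(d^{-1},(\xi(d)g)\pi)=(d^{-1},(\xi(d)\pi)(g\pi))$; on the other hand the inverse of $f(d,g)=(d,g\pi)$ in $\calK_{r_n}^{\xi'}$ is $(d^{-1},\xi'(d)(g\pi))$, and these two agree precisely because $\xi'(d)=(\xi(d))\pi$ and $\pi$ is a group homomorphism. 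The only point needing care here is the bookkeeping of the left/right conventions for voltages, which is exactly where the homomorphism property of $\pi$ enters.

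Finally I would argue that $f$ is a covering. Since $\pi$ is surjective, for any vertex $(\Phi,h)$ of $\calK_{r_n}^{\xi'}$ I can choose $g\in G$ with $g\pi=h$, so that $f(\Phi,g)=(\Phi,h)$; hence $f$ is surjective on vertices, which is the definition of a covering. Alternatively, one may simply observe that $\calK_{r_n}^{\xi'}$ is connected and invoke the fact recalled in Section~\ref{sec:prelim} that any premaniplex homomorphism into a connected premaniplex is automatically a covering. I expect no genuine obstacle: the entire content is the voltage identity $\xi'=\xi\pi$ together with careful tracking of the evaluation conventions.
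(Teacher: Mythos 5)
Your proposal is correct and follows essentially the same route as the paper: the same map $(\Phi,g)\mapsto(\Phi,g\pi)$, with the verification resting on the same key point that $\pi$ being a group homomorphism makes it compatible with left multiplication by voltages ($\xi'(d)=(\xi(d))\pi$); the paper phrases this as commutation with the monodromies $r_i$, which is equivalent to your dart-level check of the graph homomorphism axioms. You are in fact slightly more careful than the paper, which leaves the covering (surjectivity) step implicit, whereas you supply it both directly from surjectivity of $\pi$ and via connectedness.
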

	\begin{proof}
	    Let us abuse notation and define $\pi:\calK_{r_n}^\xi \to \calK_{r_n}^{\xi'}$ as  $(\Phi,\gamma)\pi = (\Phi,\gamma\pi)$. To prove that this is a maniplex homomorphism, we need to show that $\pi$ commutes with each $r_i$. This is clear, since $\pi$ only multiplies the second coordinate on the right by $\pi$, whereas $r_i$ modifies the first coordinate and, if $i = n$, multiplies the second coordinate on the left.
\end{proof}

 \subsection{Flat amalgamations}\label{sec:flat}

Let us now consider the following problem, related to the ideas in \cite[Sec. 4F]{McMullenSchulte_2002_AbstractRegularPolytopes}. A maniplex is \emph{flat} if every facet is incident to every vertex. Let $\calK$ be an $n$-maniplex, and suppose that we want to construct a flat $(n+2)$-maniplex $\calM$ with medial sections (facets of vertex-figures) isomorphic to $\calK$. Then the facets of $\calM$ will be isomorphic to a coextension $\cal X$ of $\calK$ and the vertex-figures of $\calM$ will be an extension $\cal Y$ of $\calK$. 
We say that $\cal M$ is an \emph{amalgamation of $\cal X$ and $\cal Y$}. 
Given two $n+1$ maniplexes $\cal X$ and $\cal Y$, such that all the vertex figures of $\cal X$ and the facets of $\cal Y$ are isomorphic, it is natural to ask if there is an amalgamation of $\cal X$ and $\cal Y$. If we consider Cayley coextensions and extensions, then there is a natural sufficient condition for a flat amalgamation to exist:

	\begin{theorem}\label{thm:FlatAmalgamation}
	   Let $(\cal K,r_n,\xi)$ be a Cayley extender with voltage group $G$ and $(\cal K, r_{-1}, \xi')$ be a Cayley co-extender with voltage group $G'$.
	   Suppose that $(r_n r_{-1})^2$acts trivially on the flags of $\cal K$.
	   Then, there exists a flat
amalgamation of $\cal K_{r_{-1}}^{\xi'}$ and $\cal K_{r_n}^\xi$.
	   If both $\cal K_{r_n}^\xi$ and $\cal K_{r_{-1}}^{\xi'}$ (and therefore $\cal K$) are polytopal, then the amalgamation is also polytopal.
	\end{theorem}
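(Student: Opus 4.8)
The plan is to build the amalgamation $\cM$ directly as a derived graph over $\cK$. Relabel the colours of $\cK$ as $1,\dots,n$ and let $\mathcal{Z}$ be the $(n+2)$-premaniplex obtained from $\cK$ by attaching to each flag $\Phi$ a colour-$0$ dart towards $r_{-1}\Phi$ and a colour-$(n+1)$ dart towards $r_n\Phi$. As voltage group take $G\times G'$, and let $\eta$ assign to a colour-$(n+1)$ dart starting at $\Phi$ the voltage $(\xi(\Phi_{n-1}),1)$, to a colour-$0$ dart starting at $\Phi$ the voltage $(1,\xi'(\Phi_0))$, and the identity to every dart of colour $1,\dots,n$. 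I claim $\cM:=\mathcal{Z}^{\eta}$ is the desired amalgamation. Deleting colour $n+1$ never involves $r_n$ or the $G$-coordinate, so the facets of $\cM$ are copies of $\cX=\cK_{r_{-1}}^{\xi'}$; dually, deleting colour $0$ gives vertex-figures isomorphic to $\cY=\cK_{r_n}^{\xi}$, and the $(1,n)$-sections are copies of $\cK$. Since the facet moves fix the $G$-coordinate and the vertex-figure moves fix the $G'$-coordinate, there are exactly $|G|$ facets and $|G'|$ vertices and every facet meets every vertex; hence $\cM$ is flat.

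It remains to see that $\cM$ really is a maniplex, i.e. that $(\mathcal{Z},\eta)$ is a voltage premaniplex with connected, simple derived graph. First, each alternating square on two non-consecutive colours must close up in $\mathcal{Z}$: squares using two colours of $\cK$ close because $\cK$ is a maniplex; a square on colour $0$ and a colour $j$ with $2\le j\le n$ closes because $r_{-1}$ commutes with $r_1,\dots,r_{n-1}$ (the coextender condition); a square on colour $n+1$ and a colour $j$ with $1\le j\le n-1$ closes because $r_n$ commutes with $r_0,\dots,r_{n-2}$ (the extender condition); and the only remaining square, on colours $0$ and $n+1$, closes precisely because $(r_nr_{-1})^2$ is trivial on the flags of $\cK$, which is the hypothesis. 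Next one must check that $\eta$ vanishes on each of these squares: those internal to $\cK$, to $\cK_{r_{-1}}$, and to $\cK_{r_n}$ inherit this from the fact that $\cK$, $\cX$, and $\cY$ are already maniplexes, and the generation hypotheses of Proposition~\ref{prop:DerivedIsManiplex} for $\xi$ and $\xi'$ give connectivity and rule out semiedges and parallel edges.

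The step I expect to be the main obstacle is the vanishing of $\eta$ on the mixed colour-$0$/colour-$(n+1)$ square. Since the voltage group is a direct product its two components can be treated separately; unwinding the square and using the sign relations $\xi(r_nF)=\xi(F)^{-1}$, $\xi'(r_{-1}v)=\xi'(v)^{-1}$ and $r_nr_{-1}=r_{-1}r_n$, triviality reduces to the two identities $\xi((r_{-1}\Phi)_{n-1})=\xi(\Phi_{n-1})$ and $\xi'((r_n\Phi)_0)=\xi'(\Phi_0)$, i.e. that $r_{-1}$ preserves the $\xi$-value of facets and $r_n$ preserves the $\xi'$-value of vertices. Reconciling these with the commuting hypothesis — given that $r_{-1}$ need not fix facets and $r_n$ need not fix vertices — is the crux of the argument. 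Finally, for polytopality I would feed $(\mathcal{Z},\eta)$ into the voltage intersection property of Theorem~\ref{thm:IntProp}: for $k\le m$ in $\{0,\dots,n+1\}$, intervals avoiding colour $n+1$ reduce \eqref{eq:VolPIP} to the polytopality of $\cX$ and intervals avoiding colour $0$ to that of $\cY$ (both governed by Proposition~\ref{prop:CayleyExtPoly}), while the extreme case $k=0$, $m=n+1$ splits along the direct product and is controlled by flatness; thus $\cM$ is polytopal as soon as $\cK$, $\cX$, and $\cY$ are.
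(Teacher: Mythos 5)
Your construction is exactly the paper's: the base premaniplex is $\cK$ with its colours shifted into $\{1,\dots,n\}$, colour-$0$ edges given by $r_{-1}$ and colour-$(n+1)$ edges given by $r_n$, voltages $(\xi'(\Phi_0),1)$ and $(1,\xi(\Phi_{n-1}))$ in the product of the two voltage groups; the identification of facets and vertex-figures, the flatness argument, and the polytopality sketch via Theorem~\ref{thm:IntProp} and Proposition~\ref{prop:CayleyExtPoly} (and its dual) all match the paper. The difference is that you stop at the decisive step: for the derived graph to be an $(n+2)$-premaniplex, every $(0,n+1,0,n+1)$-square of the base must carry trivial voltage, and you correctly reduce this (using $r_nr_{-1}=r_{-1}r_n$ and the relations $\xi(r_nF)=\xi(F)^{-1}$, $\xi'(r_{-1}v)=\xi'(v)^{-1}$) to the two identities $\xi((r_{-1}\Phi)_{n-1})=\xi(\Phi_{n-1})$ and $\xi'((r_n\Phi)_0)=\xi'(\Phi_0)$ --- but you never prove them, explicitly leaving them as the unresolved ``crux''. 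As written, your argument is therefore incomplete precisely there.

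You should know that this gap cannot be filled from the stated hypotheses, so your suspicion was well founded. Take $\cK$ the square with vertices $v_1,\dots,v_4$ and edges $e_1,\dots,e_4$ in cyclic order. Let $r_{-1}=Id$ with $\xi'(v_i)=a_i$, where $a_1,\dots,a_4$ are the standard generators of $G'=\bZ_2^4$; let $r_2$ be the central symmetry of the square (it pairs $e_1$ with $e_3$ and $e_2$ with $e_4$, commutes with $r_0$, and commutes with $r_{-1}=Id$, so $(r_2r_{-1})^2=Id$); and let $\xi(e_1)=\xi(e_2)=1$, $\xi(e_3)=\xi(e_4)=-1$ in $G=\bZ$ written additively. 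All hypotheses of the theorem hold and both $\cK^{\xi'}$ and $\cK_{r_2}^{\xi}$ are maniplexes, yet for a flag $\Phi$ with $\Phi_0=v_1$ and $\Phi_1=e_1$ the mixed square has voltage whose $G'$-component is $\xi'((r_2\Phi)_0)\,\xi'(\Phi_0)=a_3a_1\neq 1$; hence the lifted $(0,3,0,3)$-path is not closed and the derived graph is not a premaniplex, let alone a maniplex. Note that the paper's own proof has the same defect: it verifies connectivity and the absence of semiedges and parallel edges, but never checks that the mixed squares have trivial voltage, which is exactly the condition you isolated and which fails in general. The construction (and hence the theorem as proved) needs the additional compatibility hypotheses that $r_n$ preserve the $\xi'$-value of vertices and $r_{-1}$ preserve the $\xi$-value of facets; these hold automatically when both the extender and the coextender are canonical, but not when only one of them is, so even Corollary~\ref{coro:CanonicalFlatAmalgamation} inherits the problem.
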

	\begin{proof}
	Consider the premaniplex $\cal K_{r_{-1}\wedge r_{n}}$ obtained from shifting the colors of $\cal K$ up by 1, adding edges of colors 0 according to $r_{-1}$ and edges of color $n+1$ according to $r_n$. Since $(r_n r_{-1})^2$ is the identity we know that $\cal K_{r_{-1}\wedge r_n}$ is in fact a premaniplex.
	Consider the voltage group $G'\times G$ and for every flag $\Phi$ in $\cal K$ assign voltages $(\xi'(\Phi_0),1)$ to the dart of color 0 starting at $\Phi$ and $(1,\xi(\Phi_{n-1}))$ to the dart of color $n+1$ starting at $\Phi$. Assign trivial voltage to every dart with color between 1 and $n$. Let $\cal M$ be the derived graph of $\cal K_{r_{-1}\wedge r_{n}}$ with this voltage assignment.

	It is easy to see that $\cal M$ has copies of $\cal K_{r_{-1}}^{\xi'}$ as facets and copies of $\cal K_{r_n}^\xi$ as vertex figures.

	Let us show that $\cal K_{r_{-1}\wedge r_{n}}$ is a maniplex, using Proposition\nobreakspace \ref {prop:DerivedIsManiplex}. 
 Since we are assuming that $\calK_{r_{-1}}^{\xi'}$ and $\calK_{r_n}^\xi$ are maniplexes, Proposition\nobreakspace \ref {prop:DerivedIsManiplex} implies that the possible semiedges, as well as the edges of color 0 and $n+1$ that are parallel to some other edge, cannot have trivial voltage. 
	Also, since $\{\xi'(v):v\in \cal K_0\}$ is a generating set for $G'$ and $\{\xi(F):F\in (\cal K)_{n-1}\}$ is a generating set for $G$, then $\{(\xi'(v),1):v\in \calK_0\}\cup \{(1,\xi(F)):F\in (\calK)_{n-1}\}$ is a generating set for $G'\times G$. These two facts imply that $\calM$ is in fact a maniplex.

	The set of flags of the derived graph can be written as $\cal K\times G' \times G$. When we remove the edges of color $n+1$ we get one connected component for each element $\gamma\in G$, namely $\cal K \times G' \times \{\gamma\}$. Analogously, when we erase the edges of color $0$ we get components of the form $\cal K \times \{\gamma'\} \times G$. The vertex $\cal K \times G' \times \{\gamma\}$ and the facet $\cal K \times \{\gamma'\} \times G$ intersect in the medial section $\cal K \times \{\gamma'\} \times \{\gamma\}$, proving that $\cal M$ is flat.

    Since the vertex figures of $\cal M$ are isomorphic to $\cal K_{r_n}^\xi$ and the facets are isomorphic to $\cal K_{r_1}^{\xi'}$, we get that if $\cal M$ is polytopal, then $\cal K_{r_n}^\xi$ and $\cal K_{r_1}^{\xi'}$ are polytopal as well.
    Conversely, suppose that both $\cal K_{r_n}^\xi$ and $\cal K_{r_1}^{\xi'}$ are polytopal. Let $W$ be a path in $\cal K_{r_{-1}\wedge r_n}$ with colors in $[0,m]$ and $V$ be a path with colors in $[k,n+1]$ with the same end-points. We may assume without loss of generality that $k>0$ and $m<n+1$; this implies that the voltage of $W$ lies in $\Gamma'\times\{1\}$ and the voltage of $V$ lies in $\{1\}\times \Gamma$. Therefore, if $W$ and $V$ have the same voltage, they have voltage $(1,1)$.
    If we erase the edges of color $n+1$ from $\cal K_{r_{-1}\wedge r_{n}}$ we get exactly $\cal K_{r_{-1}}$, so we can apply the dual of Proposition\nobreakspace \ref {prop:CayleyExtPoly} to $W$ and get a path $W'$ with the same endpoints as $W$ but with colors in $[1,m]$.
    If instead we erase the edges of color 0 we get a shifted copy of $\cal K_{r_n}$, so by applying Proposition\nobreakspace \ref {prop:CayleyExtPoly} we get a path $V'$ with the same endpoints and colors in $[k,n]$ (remember colors are shifted by 1).
    The paths $W'$ and $V'$ have also the same voltage $(1,1)$, and since they lie in (a shifted) $\cal K$, there is a path $Z$ with colors in $[k,m]$ with the same endpoints (and also voltage $(1,1)$). Then Theorem\nobreakspace \ref {thm:IntProp} proves that the derived graph is polytopal.
	\end{proof}

	\begin{corollary}\label{coro:CanonicalFlatAmalgamation}
	    If $\cal M$ is a Cayley extension of $\cal K$ and $\cal M'$ is a Cayley coextension of $\cal K$, and one of them is canonical, then there is a flat amalgamation of $\cal M$ and $\cal M'$. 
	\end{corollary}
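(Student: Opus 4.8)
The plan is to reduce everything to Theorem~\ref{thm:FlatAmalgamation}: once $\cal M$ and $\cal M'$ are realized as the derived maniplexes of a Cayley extender and a Cayley coextender built on the \emph{same} facet maniplex $\cal K$, the only hypothesis of that theorem left to check is that $(r_n r_{-1})^2$ acts trivially on the flags of $\cal K$. Everything therefore comes down to exhibiting the extender/coextender pair and verifying this single commutation condition, which canonicity will make automatic.

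First, since $\cal M$ is a Cayley extension of $\cal K$, Proposition~\ref{prop:DerivedIsCayExt} produces a Cayley extender $(\cal K, r_n, \xi)$ with $\cal M \cong \cal K_{r_n}^\xi$. Dually, since $\cal M'$ is a Cayley coextension of $\cal K$, the dual of Proposition~\ref{prop:DerivedIsCayExt} produces a Cayley coextender $(\cal K, r_{-1}, \xi')$ with $\cal M' \cong \cal K_{r_{-1}}^{\xi'}$. Crucially, both constructions sit over the same $n$-maniplex $\cal K$, so all the data required by Theorem~\ref{thm:FlatAmalgamation} are in place except for the commutation condition.

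Next, I would verify that condition using canonicity. By Item~\ref{item:rn} of the definition of a Cayley extender and its coextender analogue, both $r_n$ and $r_{-1}$ are involutory permutations of the flags of $\cal K$, that is $r_n^2 = Id$ and $r_{-1}^2 = Id$. By hypothesis one of $\cal M$, $\cal M'$ is canonical. If $\cal M$ is a canonical Cayley extension we may take $r_n = Id$, whence $(r_n r_{-1})^2 = r_{-1}^2 = Id$; if instead $\cal M'$ is a canonical Cayley coextension we may take $r_{-1} = Id$, whence $(r_n r_{-1})^2 = r_n^2 = Id$. In either case $(r_n r_{-1})^2$ acts trivially on the flags of $\cal K$, so Theorem~\ref{thm:FlatAmalgamation} applies and yields a flat amalgamation of $\cal M \cong \cal K_{r_n}^\xi$ and $\cal M' \cong \cal K_{r_{-1}}^{\xi'}$, as desired.

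There is no genuine obstacle here; this is a corollary precisely because canonicity collapses the product $r_n r_{-1}$ to a single involution, making the commutation hypothesis of Theorem~\ref{thm:FlatAmalgamation} hold for free. The only points requiring care are bookkeeping ones: that the extender and coextender can be chosen over a common $\cal K$ (immediate, since $\cal M$ and $\cal M'$ are respectively an extension and a coextension of the \emph{same} $\cal K$), and that $r_n$ and $r_{-1}$ are genuine involutions on flags (built into the definitions). I therefore expect the write-up to be short.
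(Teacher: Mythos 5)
Your proposal is correct and follows essentially the same route as the paper: realize $\cal M$ and $\cal M'$ via an extender and coextender over the common $\cal K$, note that canonicity lets you take $r_n = Id$ (or $r_{-1} = Id$), observe that $(r_n r_{-1})^2$ is then trivial since the other matching is an involution, and invoke Theorem~\ref{thm:FlatAmalgamation}. The paper's own proof is just a more compressed version of exactly this argument.
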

	\begin{proof}
	    If $\cal M$ is canonical, then $\cal M=\cal K_{r_n}^\xi$ where $r_n = Id$. Since the identity commutes with any choice for $r_{-1}$, using Theorem\nobreakspace \ref {thm:FlatAmalgamation} we are done.
	    The proof when $r_{-1}=Id$ is identical.
	\end{proof}

	Theorem\nobreakspace \ref {thm:FlatAmalgamation} gives us a way to know whether there is a flat amalgamation of a Cayley extension and a Cayley coextension of the same polytope. One only has to check if the matching $r_{-1}$ induced by the co-extension commutes with the matching $r_n$ induced by the extension.
	
	In particular, we let both $\cal M$ and $\cal M'$ be regular, canonical Cayley (co)-extensions of $\cal K$ then Corollary\nobreakspace \ref {coro:CanonicalFlatAmalgamation} is just
	\cite[Theorem 2]{Schulte_1988_AmalgamationRegularIncidence} (see also \cite[Corollary 4F11]{McMullenSchulte_2002_AbstractRegularPolytopes}).

 \section{Classic examples and universal Cayley extensions} \label{sec:universal}

    We have already seen that the ditope over $\cK$ is a Cayley extension (see Example\nobreakspace \ref {eg:ditope}). Several other extensions that have appeared in the literature are also easily described as Cayley extensions.
    In this section we will explore these examples and use them as motivation to construct several Cayley extensions with certain universal properties.
    
    \begin{example} \label{eg:flat-ext}
	   Suppose that $\calK$ is facet-bipartite. Let $G = \bD_{m} = \langle x, y : x^2 = y^2 = (xy)^m = 1 \rangle$. We can define $\xi$ as a proper $2$-coloring of the facet graph of $\calK$, using colors $x$ and $y$. Then $\calK^\xi$ is the \emph{flat extension} denoted $\calK|2m$ in \cite{Cunningham_2021_FlatExtensionsAbstract}.
	\end{example}

    \begin{example} \label{eg:2toK}
	    Let $\cK$ be a $n$-maniplex such that for every flag $\Phi$ of $\cK$ the facets of $\Phi$ and $\Phi^{n-1}$ are different. (In other words, no facet is ``glued to itself''.) Let $G=\mathbb{Z}_2^{\calK_{n-1}}$, and let $\ve_F$ denote the vector that has a $1$ in the coordinate corresponding to the facet $F$ and zeroes anywhere else. Let $\xi(F)=\ve_F$. Then $\calK^\xi$ is $\hat{2}^{\calK}$ (see Section 6 of  \cite{DouglasHubardPellicerWilson_2018_TwistOperatorManiplexes}).
	\end{example}

	\begin{example} \label{eg:colorCoded} More generally, if $c$ is a function on the facets that assigns each one a color from a set of $k$ colors, we may use voltage group $G = \bZ_2^k$ and set $\xi(F) = \ve_{c(F)}$. Then $\calK^{\xi}$ is a ``color-coded extension'' of $\cK$ as described in Section 6.1 of \cite{DouglasHubardPellicerWilson_2018_TwistOperatorManiplexes}.
	    Note that Example\nobreakspace \ref {eg:ditope} corresponds to the case where there is a single color class, and Example\nobreakspace \ref {eg:2toK} corresponds to the case where each facet is in its own color class.
	\end{example}

	\begin{example} \label{eg:2s^K-1}
	  Now let us describe another way to generalize Example\nobreakspace \ref {eg:2toK}. Let $\cK$ be a $n$-maniplex such that for every flag $\Phi$ of $\cK$ the facets of $\Phi$ and $\Phi^{n-1}$ are different. Let  $\{F_{j} : 0 \leq j < m \}$ be a labelling of the set of facets of $\calK$ so that  $F_0$ is the base facet. Choose $s \in \bN$ such that $s \geq 2$ and let \[A = \bigoplus_{0 \leq j < m}\bZ_{s} = \big\{ \vx = (x_{j})_{0\leq j < m} : x_{j} = 0 \text{ for all but finitely many } j < m \big\}.  \] Let \[U= \left\{\vx \in A : \sum_{0 \leq j < m }x_{j} \equiv 0 \pmod{s}\right\} \leq A.\]

	  Let $G = U \rtimes \langle \chi \rangle$ where $\chi: \vx \to -\vx$ for every $\vx \in U$.
	  Consider the vectors $\va_{j} = \ve_{j}-\ve_{0} \in U$, where $\ve_{i}$ denotes the vector of $A$ with $i^{th}$ entry equal to $1$ and every other entry equal to $0$.
      Note that $\va_{j} = (-1, \dots, 0, 1, 0 \dots )$ if $0<j$ and that $\va_{0}=0$.
      Observe that the set $\{(\va_j, \chi) : 0 \leq j < m\}$ is a generating set of involutions for $G$. If we let $\xi(F_j)=(\va_j,\chi)$, then $\calK^{\xi}$ is the maniplex $\hat{2}s^{\calK -1}$ (see \cite{Pellicer_2009_ExtensionsRegularPolytopes,Montero_2021_SchlaefliSymbolChiral}).
	\end{example}

\subsection{Universal extensions}

Given a pre-extender $(\cal K, r_n)$, we can complete it to make a Cayley extender whose derived graph is the freest extension of that type (meaning the freest extension with that extender as its quotient by the action of the voltage group). 

\begin{definition} \label{def:univ-ext}
The \emph{universal Cayley extension of $\calK$ with respect to $r_n$}, denoted by
$\cal U(\cal K, r_n)$, is the maniplex $\cal K_{r_n}^\xi$, where 
\[
    G(\cal K,r_n)=\gen{\{\alpha_F\}_{F\in (\cal K)_{n-1}} : \alpha_F\alpha_{r_n F} =1},
\]
and $\xi(F) = \alpha_F$.
\end{definition}

 In other words, $G(\cal K,r_n)$ is the freest group generated by one element of infinite order for each pair $\{F,r_n F\}$ with $F\neq r_n F$ and one involution for each facet fixed by $r_n$.
Using Proposition\nobreakspace \ref {prop:covers_in_Cayley}, we can see that indeed, $\cal U(\cal K,r_n)$ covers all the Cayley extensions of $\cal K$ having $(\cal K,r_n)$ as the underlying pre-extender.
Note that despite not being necessarily free, $G(\cal K,r_n)$ has the property that each element can be written uniquely as a reduced word in $\{\alpha_F\}_{F\in (\cal K)_{n-1}}$.

A particularly nice case is when $r_n = Id$, in which case we get the universal canonical Cayley extension of $\calK$, which we denote by $\calU(\calK)$.

\begin{proposition}
    If $\cal K$ is polytopal then $\cal U(\cal K, r_n)$ is polytopal.
\end{proposition}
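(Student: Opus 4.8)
The plan is to apply Proposition~\ref{prop:CayleyExtPoly} to the Cayley extender $(\calK, r_n, \xi)$ underlying $\calU(\calK, r_n)$, where $\xi(F) = \alpha_F$ in the group $G(\calK, r_n)$ of Definition~\ref{def:univ-ext}. Condition~(i) of that proposition holds by hypothesis, so the whole proof reduces to checking condition~(ii): given flags $\Phi,\Psi$ of $\calK$ and a path $W \in \fg^{\Phi,\Psi}_{[k,n]}(\calK_{r_n})$ with $\xi(W)=1$, I must show that $\fg^{\Phi,\Psi}_{[k,n-1]}(\calK_{r_n})$ is nonempty, i.e. that $\Phi$ and $\Psi$ lie in the same $(k,n-1)$-section of $\calK$. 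I would write $W = P_0 e_1 P_1 \cdots e_r P_r$, where each $e_j$ is a dart of color $n$ and each $P_j$ is a (possibly trivial) path with colors in $[k,n-1]$, and induct on the number $r$ of color-$n$ darts; the base case $r=0$ is immediate since then $W$ itself avoids color $n$.

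The key structural input is the free-product nature of $G(\calK,r_n)$. Since only color-$n$ darts carry nontrivial voltage, $\xi(W) = \xi(e_r)\cdots \xi(e_1)$, where $\xi(e_j) = \alpha_{G_j}$ and $G_j$ is the facet of $\calK$ containing the starting flag of $e_j$. As $G(\calK,r_n)$ is the free product of the infinite cyclic groups $\langle \alpha_F\rangle$ (one per pair $\{F, r_n F\}$ with $F \ne r_n F$) together with the order-two groups coming from the facets fixed by $r_n$, every element has a \emph{unique} reduced expression in the $\alpha_F$; equivalently, the Cayley graph of $G(\calK,r_n)$ on these generators is a tree. Hence a nonempty word representing the identity cannot be reduced, so it must contain an adjacent cancellation: there is an index $j$ with $\alpha_{G_{j+1}} = \alpha_{G_j}^{-1}$, which (since the only relations are $\alpha_F\alpha_{r_n F}=1$) means exactly that $G_{j+1} = r_n G_j$. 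In particular $r\neq 1$, because every $\alpha_F \neq 1$.

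For the inductive step I fix such a $j$ and reroute across the cancellation. The dart $e_j$ runs from its start $\Phi_j \in G_j$ to $r_n\Phi_j \in r_n G_j = G_{j+1}$, while $e_{j+1}$ starts at $\Phi_{j+1}\in G_{j+1}$; thus the intermediate path $P_j$ joins $r_n\Phi_j$ and $\Phi_{j+1}$, \emph{both lying in the single ridge $G_{j+1}$}. \textbf{The main obstacle} is that $P_j$ may use color $n-1$ and wander out of $G_{j+1}$, so I cannot simply transport it by $r_n$, which commutes only with $r_0,\dots,r_{n-2}$. I would resolve this using that $\calK$ is a polytope: because $r_n\Phi_j$ and $\Phi_{j+1}$ are joined by the $[k,n-1]$-path $P_j$ and also lie in the common facet $G_{j+1}$ (a $[0,n-2]$-section), the ordinary path intersection property of $\calK$ (Theorem~\ref{thm:IntProp} with trivial voltage group, taking $m=n-2$) yields a path $\hat P_j$ with colors in $[k,n-2]$ from $r_n\Phi_j$ to $\Phi_{j+1}$ that stays inside $G_{j+1}$. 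Now $r_n$ restricts to a color-preserving isomorphism $G_{j+1}\to G_j$, so $r_n\hat P_j$ is a $[k,n-2]$-path from $\Phi_j$ to $r_n\Phi_{j+1}$. Since the segment $e_j P_j e_{j+1}$ also runs from $\Phi_j$ to $r_n\Phi_{j+1}$, replacing it by $r_n\hat P_j$ produces a path $W'$ from $\Phi$ to $\Psi$ with two fewer color-$n$ darts; and because I deleted a canceling pair of voltages and inserted a trivial-voltage path, $\xi(W') = \xi(W) = 1$. The induction hypothesis applied to $W'$ then completes the verification of condition~(ii), and Proposition~\ref{prop:CayleyExtPoly} gives that $\calU(\calK,r_n)$ is polytopal.
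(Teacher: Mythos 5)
Your proof is correct and takes essentially the same route as the paper's: both reduce to condition (ii) of Proposition~\ref{prop:CayleyExtPoly}, use the unique-reduced-word (free product) structure of $G(\calK,r_n)$ to find an adjacent cancellation $\xi(e_{j+1})=\xi(e_j)^{-1}$ among the color-$n$ darts, invoke polytopality of $\calK$ to replace the intermediate $[k,n-1]$-path by a $[k,n-2]$-path inside the shared facet, transport it by $r_n$, and thereby remove two color-$n$ darts at a time. The only cosmetic difference is that you organize the reduction as an induction on the number of color-$n$ darts, while the paper iterates the same replacement; your explicit identification of the obstacle (that the intermediate path may use color $n-1$ and so cannot be transported directly) is exactly the point the paper handles by passing to colors $[k,n-2]$.
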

\begin{proof}
    Let $W$ be a path in $\cal K_{r_n}$ with voltage 1 and colors in $[k,n]$.
    By Proposition\nobreakspace \ref {prop:CayleyExtPoly}, it suffices to show that there is a path (with voltage 1)
with the same starting and end point as $W$ and with colors in $[k, n-1]$. 
    We may write $W$ as $W=W_0 d_1 W_1 d_2 \ldots d_k W_k$ where $W_i$ uses colors in $[k,n-1]$ and $d_i$ has color $n$ for each $i$. Then $1=\xi(W)=\xi(d_k)\xi(d_{k-1})\ldots \xi(d_1)$.
    Because of the presentation of $G(\cK, r_{n})$ the fact that $\xi(d_k)\xi(d_{k-1})\ldots \xi(d_1)=1$ implies that there are two consecutive terms that are inverse to one another.
    Let $j$ be such that $\xi(d_j)=\xi(d_{j+1})^{-1}$. This means that $d_j$ starts at the same facet $F$ of $\calK$ as $d_{j+1}^{-1}$. Then, the endpoint $\Phi$ of $d_j$ and the starting point $\Psi$ of $d_{j+1}$ are connected by the path $W_j$ with colors in $[k,n-1]$ and also share a facet (namely, $r_n F$). But, since $\cal K$ is polytopal, this means that there is a path $V$ with colors in $[k,n-2]$ that goes from $\Phi$ to $\Psi$.
    Then, since $\restr{r_n}{F}$ is an isomorphism, the path $r_n (V)$ (the image of $V$ under this isomorphism) is a path with colors in $[k,n-2]$ that connects the starting point of $d_j$ with the endpoint of $d_{j+1}$ (see Figure\nobreakspace \ref {fig:univExt}).
    Now we consider the path  $W'=W_0d_1\ldots d_{j-1} W_{j-1} r_n(V) W_{j+1} d_{j+1}\ldots W_k$, which has the same end points  and voltage as $W$, and it uses colors in $[k,n]$, but it uses 2 darts of color $n$ less than $W$.
    If we do the same procedure to $W'$ and repeat the process $k/2$ times  (any trivial word in $G(\cK,r_{n})$ must have even length), we get a path of trivial voltage with the same end points as $W$ that does not use any dart of color $n$, proving the claim.
\end{proof}

\begin{figure}
		\centering
			\begin{scriptsize}
\def\svgwidth{.8\textwidth}
		\begingroup \makeatletter \providecommand\color[2][]{\errmessage{(Inkscape) Color is used for the text in Inkscape, but the package 'color.sty' is not loaded}\renewcommand\color[2][]{}}\providecommand\transparent[1]{\errmessage{(Inkscape) Transparency is used (non-zero) for the text in Inkscape, but the package 'transparent.sty' is not loaded}\renewcommand\transparent[1]{}}\providecommand\rotatebox[2]{#2}\newcommand*\fsize{\dimexpr\f@size pt\relax}\newcommand*\lineheight[1]{\fontsize{\fsize}{#1\fsize}\selectfont}\ifx\svgwidth\undefined \setlength{\unitlength}{856.86845434bp}\ifx\svgscale\undefined \relax \else \setlength{\unitlength}{\unitlength * \real{\svgscale}}\fi \else \setlength{\unitlength}{\svgwidth}\fi \global\let\svgwidth\undefined \global\let\svgscale\undefined \makeatother \begin{picture}(1,0.60302257)\lineheight{1}\setlength\tabcolsep{0pt}\put(0,0){\includegraphics[width=\unitlength,page=1]{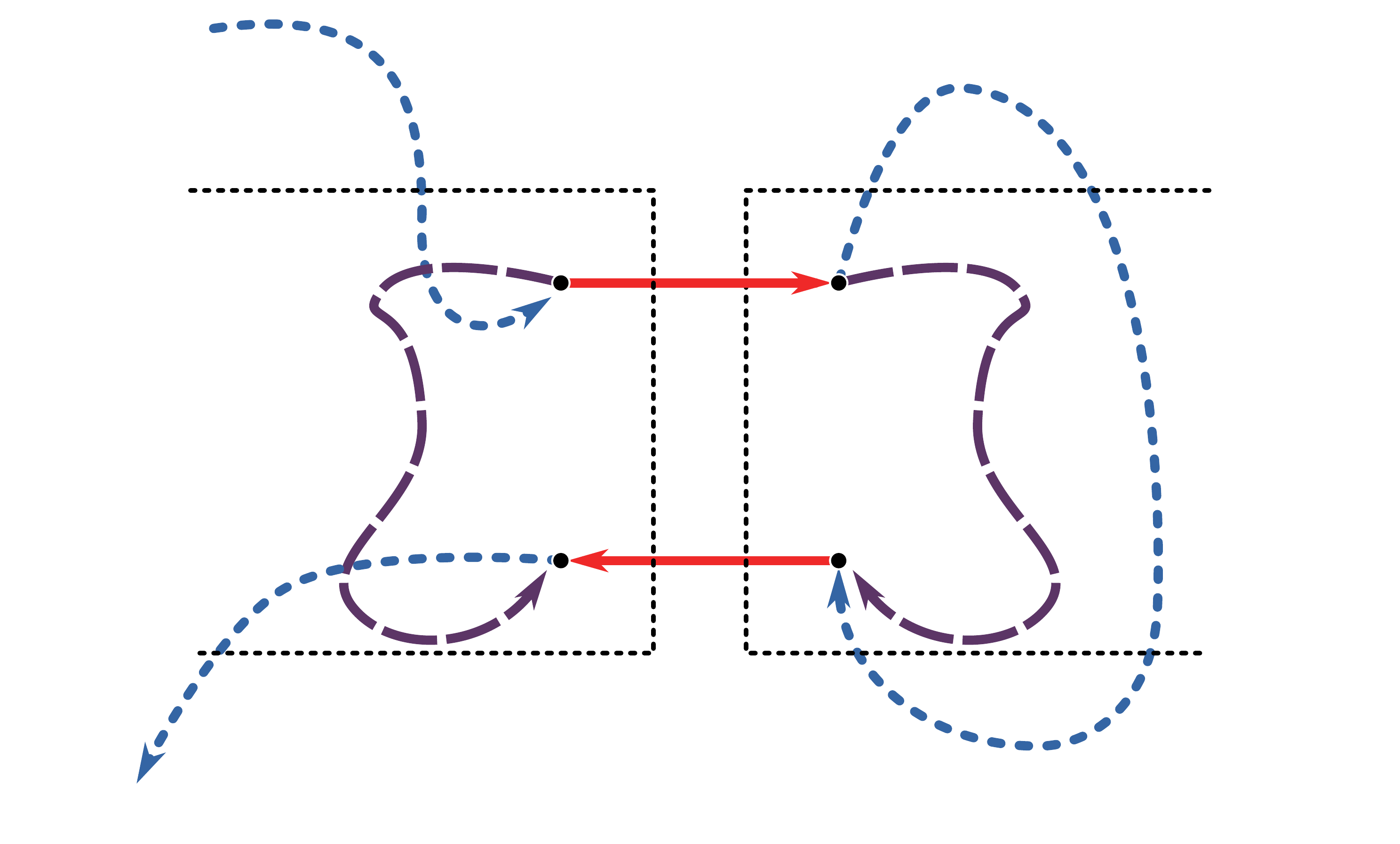}}\put(0.5992444,0.36767417){\makebox(0,0)[t]{\lineheight{1.25}\smash{\begin{tabular}[t]{c}$\Phi$\end{tabular}}}}\put(0.50000003,0.41729636){\makebox(0,0)[t]{\lineheight{1.25}\smash{\begin{tabular}[t]{c}$d_{j}$\end{tabular}}}}\put(0.46691855,0.48345927){\makebox(0,0)[rt]{\lineheight{1.24999964}\smash{\begin{tabular}[t]{r}$F$\end{tabular}}}}\put(0.11956326,0.53308149){\makebox(0,0)[rt]{\lineheight{1.24999964}\smash{\begin{tabular}[t]{r}$W_{j-1}$\end{tabular}}}}\put(0.11956326,0.11956328){\makebox(0,0)[rt]{\lineheight{1.24999964}\smash{\begin{tabular}[t]{r}$W_{j+1}$\end{tabular}}}}\put(0.84735532,0.30151126){\makebox(0,0)[lt]{\lineheight{1.24999964}\smash{\begin{tabular}[t]{l}$W_{j}$\end{tabular}}}}\put(0.5992444,0.22834608){\makebox(0,0)[t]{\lineheight{1.25}\smash{\begin{tabular}[t]{c}$\Psi$\end{tabular}}}}\put(0.53308149,0.48345927){\makebox(0,0)[lt]{\lineheight{1.24999964}\smash{\begin{tabular}[t]{l}$r_n F$\end{tabular}}}}\put(0.5,0.2188076){\makebox(0,0)[t]{\lineheight{1.25}\smash{\begin{tabular}[t]{c}$d_{j+1}$\end{tabular}}}}\put(0.68194804,0.30151126){\makebox(0,0)[rt]{\lineheight{1.24999964}\smash{\begin{tabular}[t]{r}$V$\end{tabular}}}}\put(0.31805199,0.30151126){\makebox(0,0)[lt]{\lineheight{1.24999964}\smash{\begin{tabular}[t]{l}$r_n(V)$\end{tabular}}}}\end{picture}\endgroup  \end{scriptsize}
		\caption{Polytopality of $\cU(\cK,r_{n})$}\label{fig:univExt}
\end{figure}

    Now let us see how we can adapt universal extensions to describe other classes of natural extensions. We start by considering the order of $r_{n-1} r_n$ in $\Mon(\calK^\xi)$ (we are slightly abusing notation and identifying $r_{i} \in W^{n}$ with the induced permutation of flags of $\cK^{\xi}$ ).

    \begin{proposition} \label{prop:rn-order}
    Let $(\calK, \xi)$ be a canonical Cayley extender with voltage group
    \[ G=\gen{\{\alpha_F\}_{F\in (\cal K)_{n-1}}} \]
    where $\alpha_F=\xi(F)$ for each facet $F$. Let $\Mon(\cal K^\xi) = \langle r_0, \ldots, r_n \rangle$. Then the order of $r_{n-1} r_n$ is twice the least common multiple of the orders of all $\alpha_{F'} \alpha_F$ such that $F$ shares a subfacet with $F'$.
    \end{proposition}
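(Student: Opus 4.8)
The plan is to compute directly with the action of the monodromies on the flags of $\calK^\xi$, which are the pairs $(\Phi,\gamma)$ with $\Phi$ a flag of $\calK$ and $\gamma\in G$. Since $(\calK,\xi)$ is canonical, the construction of the derived graph tells us that the generators act (on the left) by
\[
  r_n(\Phi,\gamma) = (\Phi,\,\alpha_{\Phi_{n-1}}\gamma), \qquad r_{n-1}(\Phi,\gamma) = (\Phi^{n-1},\gamma),
\]
the first because the unique color-$n$ dart at $\Phi$ is a semi-edge ($r_n = Id$) carrying voltage $\xi(\Phi_{n-1})=\alpha_{\Phi_{n-1}}$, and the second because color $n-1<n$ carries trivial voltage.

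First I would compose these. Writing $F = \Phi_{n-1}$ and $F' = (\Phi^{n-1})_{n-1}$, a short computation gives
\[
  (r_{n-1}r_n)(\Phi,\gamma) = (\Phi^{n-1},\,\alpha_F\gamma), \qquad (r_{n-1}r_n)^2(\Phi,\gamma) = (\Phi,\,\alpha_{F'}\alpha_F\,\gamma),
\]
using $(\Phi^{n-1})^{n-1}=\Phi$. Thus $(r_{n-1}r_n)^2$ fixes every first coordinate and left-multiplies the second by $\alpha_{F'}\alpha_F$, so $(r_{n-1}r_n)^{2k}(\Phi,\gamma) = (\Phi,\,(\alpha_{F'}\alpha_F)^k\gamma)$. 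The orbit of $(\Phi,\gamma)$ therefore returns to its starting flag after exactly $2\operatorname{ord}(\alpha_{F'}\alpha_F)$ steps: no odd power can fix $(\Phi,\gamma)$, since every odd power sends the first coordinate to $\Phi^{n-1}\neq\Phi$ (the permutation induced by $r_{n-1}$ on a maniplex is fixed-point-free). Hence the cycle of $r_{n-1}r_n$ through $(\Phi,\gamma)$ has length precisely $2\operatorname{ord}(\alpha_{F'}\alpha_F)$.

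To finish I would take the least common multiple of the cycle lengths over all flags, as the order of a permutation is the lcm of its cycle lengths. Two bookkeeping points match this to the stated formula. First, $\alpha_{F'}\alpha_F$ and $\alpha_F\alpha_{F'}$ are conjugate, so $\operatorname{ord}(\alpha_{F'}\alpha_F)$ depends only on the unordered pair $\{F,F'\}$; and as $\Phi$ ranges over all flags the pair $\{\Phi_{n-1},(\Phi^{n-1})_{n-1}\}$ ranges exactly over all pairs of facets of $\calK$ sharing a subfacet, the shared $(n-2)$-face being $\Phi_{n-2}$, which $r_{n-1}$ preserves. Second, $\lcm\{2a_i\}=2\lcm\{a_i\}$ (check prime by prime), so
\[
  \operatorname{ord}(r_{n-1}r_n) = \lcm_{\Phi}\, 2\operatorname{ord}\!\big(\alpha_{(\Phi^{n-1})_{n-1}}\alpha_{\Phi_{n-1}}\big) = 2\lcm\{\operatorname{ord}(\alpha_{F'}\alpha_F): F,\,F' \text{ share a subfacet}\},
\]
as claimed.

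The point demanding the most care — the main obstacle — is the parity argument forcing the factor of $2$: one must justify that no odd iterate of $r_{n-1}r_n$ fixes a flag, which rests on $\Phi\neq\Phi^{n-1}$ for every flag, i.e.\ on the fixed-point-freeness of $r_{n-1}$ guaranteed by $\calK^\xi$ being a maniplex. Equally, one should verify the coverage claim that every subfacet-sharing pair of facets really arises as $\{\Phi_{n-1},(\Phi^{n-1})_{n-1}\}$ for some flag, so the lcm is taken over exactly the stated index set and not a proper subset; this uses that each subfacet lies in precisely two facets. If some $\alpha_{F'}\alpha_F$ has infinite order, the same computation shows $r_{n-1}r_n$ has infinite order, with the convention that the lcm is then infinite.
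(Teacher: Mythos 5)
Your proof is correct and follows essentially the same route as the paper: both arguments compute that $(r_{n-1}r_n)^2$ fixes the first coordinate of each flag $(\Phi,\gamma)$ of the derived graph and left-multiplies the second coordinate by $\alpha_{F'}\alpha_F$, where $\{F,F'\}$ is the subfacet-sharing pair of facets determined by $\Phi$ and $\Phi^{n-1}$, and then read off the order of the permutation. The paper leaves the remaining bookkeeping implicit (the parity argument giving the factor of $2$, the fact that every subfacet-sharing pair is realized by some flag, and $\lcm\{2a_i\}=2\lcm\{a_i\}$), whereas you spell these steps out.
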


    \begin{proof}
    For each flag $(\Phi, x)$ we have that $(r_n r_{n-1})^2 (\Phi, x)  = (\Phi,  \alpha_{F} \alpha_{F'} x)$,
where $F$ is the facet of $\Phi$ and $F'$ is the facet of $\Phi^{n-1}$. In particular, since $\Phi$ and $\Phi^{n-1}$ have the same subfacet, $F$ and $F'$ share a subfacet, and the result follows.
    \end{proof}

    Now, the maniplex $\calU(\calK)$ has voltage group
    \[ G=\gen{\{\alpha_F\}_{F\in (\cal K)_{n-1}} : \alpha_F^2 = 1}. \]
    Then, for each pair of facets $F$ and $F'$ that share a subfacet, the element $\alpha_F \alpha_{F'}$ has infinite order, and so Proposition\nobreakspace \ref {prop:rn-order} implies that $r_{n-1} r_n$ has infinite order. Suppose that we instead wanted $r_{n-1} r_n$ to have order $2s$ for some $s \geq 2$. By Proposition\nobreakspace \ref {prop:rn-order}, the freest way to accomplish this is to add the relation $(\alpha_F \alpha_{F'})^s = 1$ for each pair of facets $F$ and $F'$ that share a subfacet.
    Let us denote the extension of $\cal K$ that uses this group by $\cal U_{2s}(\cal K)$. Note that by Corollary\nobreakspace \ref {coro:FAP-poly}, if $\cal K$ is a polytope, then so is $\cal U_{2s}(\cal K)$.

    We can obtain another, easier to describe extension using the group
    \[ G'_{2s}=\gen{\{\alpha_F\}_{F\in (\cal K)_{n-1}} : \alpha_F^2 = (\alpha_F \alpha_{F'})^s = 1}. \]
    In other words, the product $\alpha_F \alpha_{F'}$ has order $s$ regardless of whether the facets $F$ and $F'$ share a subfacet. We will denote this extension by $\cal U_{2s}'(\cal K)$. Note that $\cal U_{4}'(\cal K) \cong \hat{2}^{\cal K}$.
    
    Finally, let us reconsider the construction in Example\nobreakspace \ref {eg:2s^K-1}. Let us label the facets $\{F_j : 0 \leq j < m\}$ as in that example, and let $\alpha_i$ be $\xi(F_i)$. Then we note that $\alpha_i^2 = (\alpha_i \alpha_j)^s = 1$. Furthermore, each $\alpha_i \alpha_j$ commutes with $\alpha_k \alpha_{\ell}$. Thus, the voltage group $G$ of Example\nobreakspace \ref {eg:2s^K-1} is a quotient of
    \[ \hat{G}=\gen{\{\alpha_i\}_{i\in (\cal K)_{n-1}} : \alpha_i^2 = (\alpha_0 \alpha_i)^s = 1, [\alpha_0 \alpha_i, \alpha_0 \alpha_j]}. \]
    Now, setting $H= \gen{(\alpha_0 \alpha_i)_{i\in (\cal K)_{n-1}}}$, we find that $H \cong \bZ_s^{m-1}$. Furthermore, $\hat{G}= \gen{H, \alpha_0} = H \rtimes \gen{\alpha_0}$ of order $2s^{m-1}$. Since $G$ has the same order, $G \cong \hat{G}$. So this gives us a way to describe the voltage group abstractly.
    
    Since we have constructed several group quotients, Proposition\nobreakspace \ref {prop:covers_in_Cayley} implies the following covering relations:
    
    \[ \cal U(\cal K) \covers \calU_{2s}(\cal K) \covers \calU'_{2s}(\cal K)
    \covers \hat{2}s^{\cal K -1} \covers \cal K | 2s, \]
    where the last maniplex is only well-defined if $\cal K$ is facet-bipartite.
    Furthermore, note that for $s \geq 3$, these five constructions are distinct, though they may give the same result for some choices of $\cal K$. (For example, $\calU_{2s}$ and $\calU'_{2s}$ give the same result when applied to a simplex, where every pair of faces is adjacent.)

\begin{example}

    Let us see what we get when we apply these extensions to a square, since that is the smallest polytope such that the extensions are all distinct. 

    Let us label the facets of the square in cyclic order as $0, 1, 2, 3$. Let $\Gamma = \langle x_0, x_1, x_2, x_3 : x_i^2 = 1 \rangle$.
    Then each extension corresponds to introducing some extra relations.
    We summarise the extensions in Table\nobreakspace \ref {tab:extSquare}.

    \begin{table}[hbt]
    \begin{tabularx}{\textwidth}{|\ll{1.6}|\ll{.8}|\ll{.8}|\ll{.8}|} \hline
    \textbf{Extra relations} & \textbf{Extension} & \textbf{Size} & \textbf{Also known as} \\ \hline
    None & $\cal U(\{4\})$ & Infinite & $\{4, \infty\}$ \\ \hline
    $(x_i x_{i+1})^s = 1$ & $\cal U_{2s}(\{4\})$ & Infinite for $s \geq 2$ &  \\ \hline
    $(x_i x_j)^s = 1$ & $\cal U_{2s}'(\{4\})$ & Infinite for $s \geq 3$ & \\ \hline
    $(x_i x_j)^s = 1$, $\gen{x_0 x_1, x_0 x_2, x_0 x_3}$ is abelian & $\hat{2}s^{\{4\}-1}$ & $16s^3$ & \\ \hline
    $(x_i x_j)^2 = 1$ & $\hat{2}^{\{4\}}$ & 128 & $\{4,4\}_{(4,0)}$ \\ \hline
    $x_0 = x_2$, $x_1 = x_3$, $(x_0 x_1)^{s} = 1$ & $\{4\} | 2s$ & $16s$ & $\{4, 2s \mid 2\}$ \\ \hline
    \end{tabularx}
    \caption{Universal Cayley extensions of the square.}
    \label{tab:extSquare}
    \end{table}

    The universal extension $\cal U(\{4\})$ is the universal polyhedron of type $\{4, \infty\}$ by Corollary\nobreakspace \ref {cor:univ-reg}.

    The polyhedra $\hat{2}s^{\{4\}-1}$ are a bit obscure, but to give some idea of what we get, we used the GAP package RAMP \cite{ramp} to find that, when $s = 3$, this polyhedron is the one known as $\{4,6\}*432b$ in \cite{atlas}.
\end{example}
 \section{Symmetries of Cayley extensions} \label{sec:symmetries}

Our final goal is to determine the automorphism group and the symmetry type graph of a Cayley extension. We start with the following result which essentially states that if an automorphism projects locally, then it projects completely.

    \begin{proposition}\label{prop:projects}
        Let $\cM$ and $\cX$ be connected premaniplexes and let $p:\cal M \to \cal X$ be a covering. Let $\os{\tau}$ be an automorphism of $\cal M$. If there exists a flag $\Phi$ in $\cal M$ and an automorphism $\tau$ of $\cal X$ such that $\Phi \os{\tau} p= \Phi p \tau$, then $\os{\tau}$ projects to $\tau$, that is $\os{\tau} p = p \tau$.
    \end{proposition}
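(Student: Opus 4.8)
The plan is to exploit the single feature that makes the statement work: because automorphisms of premaniplexes act on the right while monodromies act on the left, both $\os{\tau}$ and $\tau$ commute with every monodromy, and so does the covering $p$. Connectivity of $\cM$ then lets me propagate the hypothesis, which is only assumed at the one flag $\Phi$, to every flag.

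First I would record the three commutation facts, all of which are just the statement (from the preliminaries) that the maps in question intertwine the monodromy action. Since $p$ is a covering, $\cM$ and $\cX$ share the color set $\{0,\dots,n-1\}$, so $W^{n}$ acts on the flags of each by $r_i x = x^i$. Then for every $w \in W^{n}$: as $\os{\tau}\in\aut(\cM)$ is color-preserving, $(w x)\os{\tau} = w(x\os{\tau})$ for each flag $x$ of $\cM$; as $\tau\in\aut(\cX)$ is color-preserving, $(w y)\tau = w(y\tau)$ for each flag $y$ of $\cX$; and as $p$ is a color-preserving homomorphism, $(w x)p = w(x p)$.

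Next, since $\cM$ is connected, $\mon(\cM)$ acts transitively on its flags, so every flag $\Psi$ of $\cM$ can be written as $\Psi = w\Phi$ for some $w\in W^{n}$. I would then evaluate both sides of the desired identity at $\Psi$. On one side,
\[
\Psi \os{\tau} p = (w\Phi)\os{\tau}p = w\big((\Phi\os{\tau})p\big) = w(\Phi p\tau),
\]
using that $\os{\tau}$ commutes with $w$, then that $p$ does, and finally the hypothesis $(\Phi\os{\tau})p = \Phi p\tau$. On the other side,
\[
\Psi p \tau = (w\Phi)p\tau = w\big((\Phi p)\tau\big) = w(\Phi p\tau),
\]
using that $p$ and then $\tau$ each commute with $w$. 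The two right-hand sides coincide, so $\Psi\os{\tau}p = \Psi p\tau$ for the arbitrary flag $\Psi$.

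Finally, both $\os{\tau}p$ and $p\tau$ are color-preserving graph homomorphisms $\cM\to\cX$, and such a homomorphism is determined by its action on flags (the image of the unique $i$-dart at a flag is forced by color preservation), so agreement on all flags yields $\os{\tau}p = p\tau$. I do not expect a genuine obstacle here; the only thing requiring care is the bookkeeping of the left/right conventions, making sure the monodromy word $w$ slides past $\os{\tau}$, $p$, and $\tau$ in the correct order. That the conclusion follows from so weak a ``local'' hypothesis is exactly the payoff of having automorphisms and monodromies act on opposite sides.
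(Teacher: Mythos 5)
Your proposal is correct and is essentially the paper's own proof: both use connectivity to write an arbitrary flag as $w\Phi$ for a monodromy $w$, slide $w$ past $\os{\tau}$, $p$, and $\tau$ (since all three intertwine the monodromy action), and then invoke the hypothesis at $\Phi$. The only difference is cosmetic --- the paper collapses the computation into a single chain of equalities, while you compute the two sides separately and add the (harmless, implicitly assumed) remark that agreement on flags forces equality of the homomorphisms.
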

    \begin{proof}
        Let $\Psi$ be any flag of $\cal M$. Then there is a monodromy $w$ such that $\Psi=w\Phi$. Then,
        \[
        \begin{aligned}
            \Psi \os{\tau} p &= (w \Phi) (\os{\tau} p)\\
            &= w (\Phi \os{\tau} p)\\
            &= w (\Phi p \tau)\\
            &= (w \Phi) (p\tau)\\
            &= \Psi p\tau.
        \end{aligned}
        \]
\end{proof}

    \begin{corollary}\label{coro:reg_projects}
        If $p:\cal M\to \cal X$ is a covering and $\cal X$ is regular, every automorphism of $\cal M$ projects to $\cal X$.
    \end{corollary}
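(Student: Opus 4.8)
The plan is to deduce the corollary from Proposition \ref{prop:projects} by exploiting the flag-transitivity encoded in the word \emph{regular}. The proposition reduces the global statement ``$\os{\tau}$ projects'' to the purely local requirement that $\os{\tau}$ agree, after projecting, with \emph{some} automorphism of $\cX$ at a \emph{single} flag. So all I need to supply is, for each automorphism $\os{\tau}$ of $\cM$, one such witnessing automorphism of $\cX$, and regularity is exactly the tool that produces it.

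Concretely, I would fix an arbitrary automorphism $\os{\tau} \in \aut(\cM)$ and choose any flag $\Phi$ of $\cM$. Pushing forward along the covering $p$ yields two flags of $\cX$, namely $\Phi p$ and $\Phi \os{\tau} p$. Because $\cX$ is regular, $\aut(\cX)$ acts transitively on the flags of $\cX$, so there is an automorphism $\tau \in \aut(\cX)$ with $\Phi p \tau = \Phi \os{\tau} p$. This is precisely the hypothesis of Proposition \ref{prop:projects}, with this choice of $\Phi$ and $\tau$. Applying that proposition gives $\os{\tau} p = p \tau$, i.e.\ $\os{\tau}$ projects to $\tau$. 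Since $\os{\tau}$ was an arbitrary automorphism of $\cM$, every automorphism of $\cM$ projects to $\cX$, which is the claim.

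There is essentially no serious obstacle here; the content is entirely absorbed by Proposition \ref{prop:projects}. The only points that need a word of care are the standing hypotheses of that proposition: that both $\cM$ and $\cX$ are connected (inherited from the setting in which the covering is considered, with $\cX$ connected as a regular premaniplex) and that the $\tau$ produced is a genuine automorphism rather than a mere flag-to-flag bijection. Both are handled by regularity, which in fact gives more than is strictly needed, since the action of $\aut(\cX)$ is also free; here only transitivity is used to manufacture $\tau$, and the freeness guarantees that $\tau$ is uniquely determined by the condition $\Phi p \tau = \Phi \os{\tau} p$.
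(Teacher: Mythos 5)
Your proposal is correct and is exactly the argument the paper intends: the corollary is stated without proof precisely because, as you observe, regularity (flag-transitivity of $\aut(\cX)$) supplies the witnessing automorphism $\tau$ at a single flag, and Proposition\nobreakspace \ref{prop:projects} does the rest. Your remarks on connectivity and on freeness giving uniqueness of $\tau$ are accurate but not needed beyond what you already used.
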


It is clear that in general, $\Gamma(\calK_{r_n})$ is a subgroup of $\Gamma(\calK)$; the edges that we add with label $n$ may disrupt some of the symmetry of $\calK$. Let us consider what we can say about the automorphisms of a Cayley extension when $\Gamma(\calK_{r_n}) = \Gamma(\calK)$.

  \begin{proposition}\label{prop:Cayley_projects}
        If $\Gamma(\calK_{r_n}) = \Gamma(\calK)$, then every automorphism of $\calK_{r_n}^\xi$ projects to $\calK_{r_n}$.
    \end{proposition}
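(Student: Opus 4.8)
The plan is to reduce the whole statement to Proposition~\ref{prop:projects}, which says that an automorphism of $\calK_{r_n}^\xi$ projects to $\calK_{r_n}$ as soon as it agrees with a projected automorphism at a \emph{single} flag. Write $p:\calK_{r_n}^\xi \to \calK_{r_n}$ for the natural covering $(\Phi,\gamma)\mapsto\Phi$ (both premaniplexes are connected, and $p$ is a covering, so the hypotheses of Proposition~\ref{prop:projects} are available). Given an automorphism $\os{\tau}$ of $\calK_{r_n}^\xi$, I would first fix the base flag $(\Phi_0,1)$ and record its image $(\Phi_0,1)\os{\tau}=(\Psi_0,\delta)$, so that $(\Phi_0,1)\os{\tau}p=\Psi_0$. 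The goal then becomes producing an automorphism $\tau$ of $\calK_{r_n}$ with $\Phi_0\tau=\Psi_0$; once we have it, $\Phi_0 p\tau=\Psi_0=(\Phi_0,1)\os{\tau}p$ and Proposition~\ref{prop:projects} immediately upgrades this local agreement to the global identity $\os{\tau}p=p\tau$.

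To build $\tau$, I would exploit that $\os{\tau}$, being color-preserving, permutes the facets of $\calK_{r_n}^\xi$, which are exactly the copies $(\calK,\gamma)$ obtained as the connected components after deleting the $n$-colored edges. Hence $\os{\tau}$ restricts to a color-preserving isomorphism from the facet $(\calK,1)$ to the facet $(\calK,\delta)$. Because darts of color $i<n$ carry trivial voltage, each projection $(\calK,\gamma)\to\calK$, $(\Phi,\gamma)\mapsto\Phi$, is a color-preserving isomorphism onto $\calK$. Composing the inclusion $\Phi\mapsto(\Phi,1)$, the restriction $\restr{\os{\tau}}{(\calK,1)}$, and the projection off $(\calK,\delta)$ yields a color-preserving automorphism $\tau_0$ of $\calK$, namely $\Phi\tau_0=$ the first coordinate of $(\Phi,1)\os{\tau}$; in particular $\Phi_0\tau_0=\Psi_0$.

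The crux, and the only place the hypothesis is needed, is to upgrade $\tau_0\in\Gamma(\calK)$ to an automorphism of $\calK_{r_n}$. Proposition~\ref{prop:projects} demands that $\tau$ be an automorphism of the base premaniplex $\calK_{r_n}$, which means it must also respect the new $n$-edges, i.e.\ commute with $r_n$. This is precisely what the assumption $\Gamma(\calK_{r_n})=\Gamma(\calK)$ supplies: under it, every color-preserving automorphism of $\calK$ automatically lies in $\Gamma(\calK_{r_n})$. Setting $\tau:=\tau_0$, we therefore have $\tau\in\Gamma(\calK_{r_n})$ with $\Phi_0 p\tau=\Psi_0=(\Phi_0,1)\os{\tau}p$, and Proposition~\ref{prop:projects} gives $\os{\tau}p=p\tau$, completing the argument.

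The main obstacle I anticipate is exactly this last step. A priori the facet-restriction automorphism $\tau_0$ recovered from $\os{\tau}$ is only guaranteed to be an automorphism of $\calK$, and it could fail to preserve the $n$-adjacencies encoded by $r_n$; in that case it would not be a legitimate automorphism of $\calK_{r_n}$ and Proposition~\ref{prop:projects} could not be invoked. The hypothesis $\Gamma(\calK_{r_n})=\Gamma(\calK)$ is what removes this difficulty and makes the local-to-global projection go through; everything else is bookkeeping with the derived-graph structure.
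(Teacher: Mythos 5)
Your proof is correct and follows essentially the same route as the paper: extract from the action of $\os{\tau}$ on a single facet an automorphism of $\calK$ (the paper does this by composing $\os{\tau}$ with the deck transformation $\beta^{-1}\alpha$ so that a facet is stabilized, while you use the facet projections directly, which amounts to the same thing), promote it to an automorphism of $\calK_{r_n}$ via the hypothesis $\Gamma(\calK_{r_n})=\Gamma(\calK)$, and then invoke Proposition\nobreakspace \ref{prop:projects} at one flag to globalize. No gaps; the argument matches the paper's proof in substance.
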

    \begin{proof} 
        Let $\os{\tau}$ be an automorphism of $\calK_{r_n}^\xi$ and let $p:\calK_{r_n}^\xi\to \calK_{r_n}$ be the natural projection.
        Let $(\Phi,\alpha)$ be any flag of $\calK_{r_n}^\xi$ and let $(\Psi,\beta) = (\Phi,\alpha)\os{\tau}$. Then $(\Phi,\alpha)\os{\tau}\beta^{-1}\alpha = (\Psi,\alpha)$.
        This means that $\os{\tau}\beta^{-1}\alpha$ stabilizes the facet $(\calK, \alpha)$, which is isomorphic to $\calK$.
        Therefore, there is an automorphism $\tau$ of $\calK$ that maps $\Phi$ to $\Psi$, and by hypothesis, $\tau$ is also an automorphism of $\calK_{r_n}$. Proposition\nobreakspace \ref {prop:projects} finishes the proof.
    \end{proof}
    
    Proposition\nobreakspace \ref {prop:Cayley_projects} lets us find the whole automorphism group and symmetry type graph of certain Cayley extensions. To do so, we will use the following Theorem:
    
    \begin{theorem}[{\cite[Corollary 5.2]{Malnic_1998_GroupActionsCoverings}}]\label{thm:split_extensions}
    Let $X$ be a graph, $\xi:\Pi(X)\to G$ a voltage assignment, $A$ a subgroup of the automorphism group of $X$ and $\Omega$ a set of vertices of $X$ invariant under the action of $A$.
    If it is true that for every path $W$ with both ends on $\Omega$ with trivial voltage and every $\tau \in A$ then $\xi(W\tau)=1$, then every element of $A$ lifts to $X^\xi$ and the set of all these lifts is a semidirect product $G\rtimes A$.
    
\end{theorem}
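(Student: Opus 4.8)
The plan is to construct, for each $\tau\in A$, a \emph{canonical} lift $\os\tau$ of $\tau$ to $X^{\xi}$ by parallel transport, using the fibres over $\Omega$ as normalization points; then to show that the collection of \emph{all} lifts of elements of $A$ forms a group fitting into a short exact sequence with kernel the deck group $G$ and quotient $A$, and that $\tau\mapsto\os\tau$ splits it. Throughout I write $p\colon X^{\xi}\to X$ for the natural projection $(v,g)\mapsto v$, and I may assume (working one connected component at a time, as in the applications to maniplexes) that $X^{\xi}$ is connected, so that any lift is determined by the image of a single vertex, exactly as in the argument of Proposition~\ref{prop:projects}.

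\emph{Construction of $\os\tau$, and the role of the hypothesis.} Fix $\tau\in A$ and define $\os\tau$ on vertices by $(v,g)\os\tau=(v\tau,\theta_v(g))$, where $\theta_v\colon G\to G$ is prescribed by transporting the base fibres: for a path $Q$ from a vertex $w\in\Omega$ to $v$ set $\theta_v(\xi(Q))=\xi(Q\tau)$. Since $\Omega$ is $A$-invariant, this normalization sends $(w,1)\mapsto(w\tau,1)$ for every $w\in\Omega$ simultaneously. The substance of the theorem is that $\theta_v$ is well defined, and this is where the hypothesis enters; it is the main obstacle. Concretely, if $Q$ runs from $w\in\Omega$ to $v$ and $Q'$ runs from $w'\in\Omega$ to $v$ with $\xi(Q)=\xi(Q')$, then $QQ'^{-1}$ is a path with both ends in $\Omega$ and $\xi(QQ'^{-1})=\xi(Q')^{-1}\xi(Q)=1$; the hypothesis yields $\xi((QQ'^{-1})\tau)=1$, i.e. $\xi(Q\tau)=\xi(Q'\tau)$, so the two prescriptions for $\theta_v$ coincide. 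Connectivity of $X^{\xi}$ guarantees that every element of $G$ occurs as some such $\xi(Q)$, and applying the same argument to $\tau^{-1}$ shows $\theta_v$ is a bijection.

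\emph{Verification that $\os\tau$ is an automorphism lifting $\tau$.} The identity $p\,\os\tau=\tau\,p$ is immediate from the first coordinate. For the adjacency condition I check voltage compatibility: if $Q$ ends at $I(d)$ with $\xi(Q)=g$, then $Qd$ reaches $I(d^{-1})$ with voltage $\xi(d)g$, so
\[
\theta_{I(d^{-1})}(\xi(d)g)=\xi((Qd)\tau)=\xi((Q\tau)(d\tau))=\xi(d\tau)\,\xi(Q\tau)=\xi(d\tau)\,\theta_{I(d)}(g).
\]
This is exactly the requirement that $\os\tau$ carry the dart $(d,g)$, which joins $(I(d),g)$ to $(I(d^{-1}),\xi(d)g)$, to the dart $(d\tau,\theta_{I(d)}(g))$. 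Hence $\os\tau$ preserves adjacency (and colours, when $\tau$ does), and being a bijection it is an automorphism. In particular every $\tau\in A$ lifts.

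\emph{Assembling the semidirect product.} Let $L$ be the set of all automorphisms of $X^{\xi}$ that project under $p$ to some element of $A$. Since $A$ is a group and composites and inverses of lifts are again lifts, $L$ is a subgroup of $\Gamma(X^{\xi})$, and $p$ induces a surjective homomorphism $L\to A$ whose kernel is the group of lifts of the identity, namely the deck transformations $(v,g)\mapsto(v,gh)$ for $h\in G$; this kernel is isomorphic to $G$ and is normal in $L$ (conjugating a lift of $\mathrm{id}$ by a lift of $\tau$ projects to $\tau^{-1}\tau=\mathrm{id}$). It remains to split the sequence. The assignment $s\colon\tau\mapsto\os\tau$ is a section: $\os{\tau_1}\,\os{\tau_2}$ and $\os{\tau_1\tau_2}$ are both lifts of $\tau_1\tau_2$, and by the $\Omega$-normalization they agree on a base vertex $(w_0,1)$ with $w_0\in\Omega$, both sending it to $(w_0\tau_1\tau_2,1)$; since a lift is determined by the image of a single vertex, they coincide. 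Therefore $L=G\rtimes A$, as claimed.
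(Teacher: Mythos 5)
Your proposal cannot be compared against an internal argument, because the paper gives none: this statement is imported verbatim (up to notational conventions) from Malni\v{c}'s Corollary 5.2 and is used as a black box. Judged on its own, your reconstruction is correct and is essentially the standard proof of that result: the crux is exactly where you place it, namely that the hypothesis on paths with both endpoints in $\Omega$ makes the transport maps $\theta_v$ well defined, and your computation $\xi(QQ'^{-1})=\xi(Q')^{-1}\xi(Q)$ correctly uses the paper's anti-homomorphism convention $\xi(W_1W_2)=\xi(W_2)\xi(W_1)$; the dart computation and the ``lifts are determined by the image of one vertex'' argument (the same mechanism as Proposition~\ref{prop:projects}) then assemble the split exact sequence $1\to G\to L\to A\to 1$ correctly. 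Two caveats, both of which you handle reasonably but which are worth making explicit since the paper's statement omits them: you need $X^{\xi}$ connected (otherwise deck transformations are not determined by one vertex image and the set of lifts can be strictly larger than $G\rtimes A$, e.g.\ lifts of the identity over an edgeless base give a full symmetric group on each fibre), and you need $\Omega\neq\emptyset$ (otherwise the hypothesis is vacuous while the conclusion can fail); both assumptions are implicit in Malni\v{c}'s setting, where graphs are connected and $\Omega$ is an $A$-orbit, and the first you state explicitly while the second you use silently when choosing $w\in\Omega$. What your self-contained argument buys over the paper's citation is consistency with this paper's right-action conventions, which is genuinely useful since translating Malni\v{c}'s left-handed formulas is error-prone.
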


    Recall that a polytope is \emph{hereditary} if every automorphism of each facet can be extended to an automorphism of the whole polytope that fixes that facet \cite{hereditary}.

	\begin{theorem}\label{cor:aut-gp-of-extension}
	Let $(\calK,r_n,\xi)$ be a Cayley extender.
	Suppose that $\Gamma(\cal K_{r_n})=\Gamma(\cal K)$ and that every automorphism of $\calK$ induces an automorphism of the voltage group $\Gamma$.
	Then $\calK_{r_n}^\xi$ is hereditary, $\Gamma(\calK_{r_n}^\xi) = \Gamma \rtimes \Gamma(\calK)$, and the symmetry type graph of $\calK_{r_n}^\xi$ is $\calK_{r_n}/\Gamma(\cal K)$.
\end{theorem}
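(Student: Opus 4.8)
The plan is to identify $\Gamma(\calK_{r_n}^\xi)$ with the split extension produced by the lifting theorem \ref{thm:split_extensions}, and then use Proposition \ref{prop:Cayley_projects} to show that nothing outside that extension can occur. Throughout I write $p\colon \calK_{r_n}^\xi \to \calK_{r_n}$ for the natural projection, and for $\alpha\in\Gamma$ I let $R_\alpha$ denote the map $(\Phi,g)\mapsto(\Phi,g\alpha)$.

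First I would establish the inclusion $\Gamma\rtimes\Gamma(\calK)\le\Gamma(\calK_{r_n}^\xi)$. The voltage group $\Gamma$ sits inside $\Gamma(\calK_{r_n}^\xi)$ as the maps $R_\alpha$: one checks directly that each $R_\alpha$ commutes with every $r_i$ (trivially for $i<n$, and for $i=n$ because $R_\alpha$ multiplies on the right while $r_n$ multiplies on the left), and these act regularly on the facets, as in Proposition \ref{prop:DerivedIsCayExt}. To lift $A:=\Gamma(\calK)=\Gamma(\calK_{r_n})$ I would apply Theorem \ref{thm:split_extensions} with $X=\calK_{r_n}$ and $\Omega$ the full vertex set, so the hypothesis to verify is that $\xi(W)=1$ forces $\xi(W\tau)=1$ for every path $W$ and every $\tau\in A$. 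This is exactly where the second standing assumption is used: each $\tau$ induces a \emph{group} automorphism $\bar\tau$ of $\Gamma$ with $\xi(F\tau)=\bar\tau(\xi(F))$ on facets; since a color-$n$ dart carries the voltage of its initial facet and all other darts carry trivial voltage, this propagates to $\xi(W\tau)=\bar\tau(\xi(W))$ for every path $W$, and trivial voltage is preserved because $\bar\tau(1)=1$. Theorem \ref{thm:split_extensions} then gives that every element of $A$ lifts and that the set of all lifts is a semidirect product $\Gamma\rtimes\Gamma(\calK)$ inside $\Gamma(\calK_{r_n}^\xi)$. I would record the explicit lift $\hat\tau\colon(\Phi,g)\mapsto(\Phi\tau,\bar\tau(g))$, whose automorphism property is the same computation, for later use.

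For the reverse inclusion, take any $\tilde\tau\in\Gamma(\calK_{r_n}^\xi)$. Because $\Gamma(\calK_{r_n})=\Gamma(\calK)$, Proposition \ref{prop:Cayley_projects} applies and $\tilde\tau$ projects to some $\tau\in\Gamma(\calK_{r_n})$; then $\tilde\tau$ and the lift $\hat\tau$ both cover $\tau$, so $\tilde\tau\hat\tau^{-1}$ covers the identity. An automorphism covering the identity preserves every fiber and, since $\calK_{r_n}^\xi$ is connected and $\Aut$ acts freely on a connected maniplex, is determined by its value at a single flag; comparing with the $R_\alpha$ (which realize every value on a fixed fiber) shows it equals some $R_\alpha\in\Gamma$. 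Hence $\tilde\tau\in\Gamma\cdot\hat\tau\subseteq\Gamma\rtimes\Gamma(\calK)$, and equality $\Gamma(\calK_{r_n}^\xi)=\Gamma\rtimes\Gamma(\calK)$ follows.

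Heredity is then immediate from the explicit lift: restricted to the base facet $(\calK,1)$, the lift $\hat\tau$ sends $(\Phi,1)\mapsto(\Phi\tau,1)$, so it fixes that facet setwise and restricts to $\tau$ there; since $\Gamma$ acts transitively on facets, conjugating by a suitable $R_\alpha$ handles every other facet, and each facet-automorphism is some $\tau\in\Gamma(\calK_{r_n})=\Gamma(\calK)$ because the facets are copies of $\calK$. For the symmetry type graph I would quotient in two stages: $\Gamma$ is normal in $\Gamma(\calK_{r_n}^\xi)$ with $\calK_{r_n}^\xi/\Gamma=\calK_{r_n}$, and the residual group $\Gamma(\calK_{r_n}^\xi)/\Gamma\cong\Gamma(\calK)$ acts on $\calK_{r_n}$ as its full automorphism group $\Gamma(\calK_{r_n})=\Gamma(\calK)$, whence $\calK_{r_n}^\xi/\Gamma(\calK_{r_n}^\xi)=\calK_{r_n}/\Gamma(\calK)$. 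The main obstacle is the verification of the Malni\v{c} hypothesis, that is, converting the qualitative assumption ``$\tau$ induces an automorphism of $\Gamma$'' into the concrete identity $\xi(W\tau)=\bar\tau(\xi(W))$ for all paths; once that is in hand, the remaining steps are bookkeeping with the two standing hypotheses.
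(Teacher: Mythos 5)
Your proposal is correct and takes essentially the same approach as the paper: verify that trivial-voltage paths keep trivial voltage under automorphisms (the Malni\v{c} hypothesis), apply Theorem~\ref{thm:split_extensions} with $\Omega$ the full vertex set to lift $\Gamma(\calK)=\Gamma(\calK_{r_n})$ as $\Gamma\rtimes\Gamma(\calK)$, and then use Proposition~\ref{prop:Cayley_projects} to conclude that every automorphism of $\calK_{r_n}^\xi$ is such a lift, so nothing else occurs. Your write-up is simply more explicit than the paper's (the explicit lift $\hat\tau\colon(\Phi,g)\mapsto(\Phi\tau,\bar\tau(g))$, the fiber-plus-freeness argument identifying deck transformations with the $R_\alpha$, and the heredity and symmetry-type-graph verifications are all left implicit in the paper), but the underlying argument is the same.
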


	\begin{proof}
	Let $\Gamma$ be the voltage group of $(\calK,r_n,\xi)$.
	Note that our hypotheses imply that if $W$ is a path in $\calK_{r_n}$ with trivial voltage, then its image under any automorphism of $\cal K_{r_n}$ also has trivial voltage.
	Therefore, we can use Theorem\nobreakspace \ref {thm:split_extensions} taking $\Omega$ as the whole set of vertices, and we get that the automorphism group of $\cal K$ lifts as a semidirect product $\Gamma \rtimes \Gamma(\cal K)\leq \Gamma(\calK_{r_n}^\xi)$.
But, because of Proposition\nobreakspace \ref {prop:Cayley_projects} every automorphism of $\calK_{r_n}^\xi$ projects to $\cal K_{r_n}$. This proves that $\Gamma \rtimes \Gamma(\cal K)$ is the whole automorphism group of $\calK_{r_n}^\xi$.
	\end{proof}

Note that in Theorem\nobreakspace \ref {cor:aut-gp-of-extension}, the symmetry type graph that we obtain, $\calK_{r_n} / \Gamma(\calK)$, is just the symmetry type graph of $\calK$ but with additional edges of label $n$.

\begin{corollary}
If $(\calK, \xi)$ is a canonical Cayley extender and every automorphism of $\calK$ induces an automorphism of the voltage group $\Gamma$, then $\calK^\xi$ is hereditary, $\Gamma(\calK^\xi) = \Gamma \rtimes \Gamma(\calK)$, and the symmetry type graph of $\calK^\xi$ is obtained from the symmetry type graph of $\calK$ by adding semi-edges of label $n$ to every node.
\end{corollary}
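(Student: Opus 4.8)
The plan is to observe that a canonical Cayley extender is precisely a Cayley extender with $r_n = Id$, and then to verify that the hypotheses of Theorem~\ref{cor:aut-gp-of-extension} hold automatically in this case, so that the corollary follows by specialization.

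First I would check the remaining hypothesis of that theorem, namely $\Gamma(\calK_{r_n}) = \Gamma(\calK)$. Recall from the discussion following the construction of the pre-extension that $\Gamma(\calK_{r_n})$ consists exactly of those automorphisms of $\calK$ that commute with the monodromy $r_n$. For a canonical extender we have $r_n = Id$, and every automorphism commutes with the identity, so $\Gamma(\calK_{Id}) = \Gamma(\calK)$ with no further assumption. The only other hypothesis of Theorem~\ref{cor:aut-gp-of-extension}, that every automorphism of $\calK$ induces an automorphism of the voltage group $\Gamma$, is exactly what we assume here. Invoking the theorem then immediately gives that $\calK^\xi = \calK_{Id}^\xi$ is hereditary and that $\Gamma(\calK^\xi) = \Gamma \rtimes \Gamma(\calK)$.

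It remains to identify the symmetry type graph. Theorem~\ref{cor:aut-gp-of-extension} gives it as $\calK_{Id}/\Gamma(\calK)$, so I would describe $\calK_{Id}$ explicitly. Since $r_n = Id$ means the $n$-adjacent flag of each flag $\Phi$ is $\Phi$ itself, the pre-extension $\calK_{Id}$ is just $\calK$ with a semi-edge of color $n$ attached at every flag. Passing to the quotient by $\Gamma(\calK)$, the flags of $\calK$ collapse to the nodes of the symmetry type graph of $\calK$, while each semi-edge of color $n$ descends to a semi-edge of color $n$ at the corresponding node. Hence the symmetry type graph of $\calK^\xi$ is obtained from that of $\calK$ by attaching a semi-edge of label $n$ at every node, as claimed.

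The argument presents no genuine obstacle, since all the content lives in Theorem~\ref{cor:aut-gp-of-extension}; the only points requiring care are that $\Gamma(\calK_{Id}) = \Gamma(\calK)$ holds for free, and that the quotient $\calK_{Id}/\Gamma(\calK)$ is described correctly. The one mild subtlety worth spelling out is the last: one should confirm that a color-$n$ semi-edge at each flag quotients to a single color-$n$ semi-edge at each node (rather than becoming a link or several edges), which holds because $r_n = Id$ is fixed by every automorphism of $\calK$ and the action of $\Gamma(\calK)$ on the flags is free.
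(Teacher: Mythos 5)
Your proposal is correct and follows exactly the route the paper intends: the corollary is the specialization of Theorem~\ref{cor:aut-gp-of-extension} to $r_n = Id$, where $\Gamma(\calK_{Id}) = \Gamma(\calK)$ holds automatically since every automorphism commutes with the identity, and the quotient $\calK_{Id}/\Gamma(\calK)$ is visibly the symmetry type graph of $\calK$ with a color-$n$ semi-edge at each node. Your added care about the semi-edges descending to semi-edges in the quotient matches the paper's own remark following the theorem.
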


\subsection{Symmetry type graphs and $r_n$-friendly sets}

Our goal now is to extend Theorem\nobreakspace \ref {cor:aut-gp-of-extension} to see what more we can say about the symmetry type graphs of Cayley extensions. In particular, we will calculate the symmetry type graph of the universal extension $\calU(\calK,r_n)$ in terms of $\calK$ and $r_n$ (see Definition\nobreakspace \ref {def:univ-ext}). 

Note that since $G$ acts regularly on the facets of $\cal K_{r_n}^\xi$ we get that $\Gamma(\cal K_{r_n}^\xi) = SG$ where $S$ is the stabilizer of $(\cal K,1)$.
Indeed, if $\tau \in \Gamma(\cal K_{r_n}^\xi)$ maps $(\Phi, 1)$ to $(\Psi, \alpha)$, then $\tau \alpha^{-1} \in S$.
However, this might not be a semidirect product, as $G$ is not necessarily normal.

It is known (see , for example, \cite[Theorem 2.3.1]{Mochan_2021_AbstractPolytopesTheir_PhDThesis}), that an automorphism of a derived graph projects if and only if it normalizes the voltage group, so $G$ is normal if and only if $S$ is the subgroup of $\Gamma(\cal K_{r_n})$ that  induces automorphisms of $G$. 

In any case, we know that $\cal K_{r_n}^\xi/G$ is isomorphic to $\cal K_{r_n}$, but the actual symmetry type graph might be a quotient of this.
However, since every flag is in the same orbit as a flag in the base facet $({\cal K},1)$, we can find out which orbits have to be identified in $\cal K_{r_n}$ to get the actual symmetry type graph just by looking at the quotient of $\cal K_{r_n}$ by the stabilizer of $({\cal K}, 1)$.
This quotient is well-defined despite the fact that the stabilizer of $(\cal K,1)$ does not act by automorphisms on $\cal K_{r_n}$. We will dig deeper into this fact after we have defined $r_n$-friendly sets.
So let us turn to the problem of determining that stabilizer.

First, let us motivate the following technical definition. Consider an automorphism $\tau$ of a Cayley extension $\cal K_{r_n}^\xi$, and fix a flag $\Phi$ of $\cal K$. Then $\tau$ takes $(\Phi, \gamma)$ to some flag $(\Phi', \gamma')$, and the flag $\Phi'$ depends not only on $\Phi$ but possibly on $\gamma$. Let us define $\tau_\gamma$ in $\Gamma(\cal K)$ by
\[ \Phi \tau_\gamma = ((\Phi, \gamma) \tau) p , \]
where $p$
is the projection onto the first coordinate.
In other words,
$\tau_\gamma$ is the automorphism of $\cal K$ induced by the restriction of $\tau$ to the facet $(\cK, \gamma)$
Thus, each automorphism $\tau$ may induce several automorphisms of $\cal K$, and we denote the set of such automorphisms by $S_{\tau}$. More generally, we may adapt the preceding process even to homomorphisms between Cayley extensions.

\begin{definition}\label{def:s_phi}
    Let $(\cal K, r_n,\xi)$ and $(\cal K, r_n',\xi')$ be Cayley extenders with voltage groups $G$ and $G'$ respectively.
    Suppose that $\cal K_{r_n}^\xi$ covers $\cal K_{r_n'}^{\xi'}$, and let $\varphi:\cal K_{r_n}^\xi\to \cal K_{r_n'}^{\xi'}$ be a covering.
    For each $\gamma\in G$ and each $\Phi\in\cal K$ define $\varphi_\gamma$ by $\Phi\varphi_\gamma = ((\Phi,\gamma)\varphi)p$, where $p$ is the projection on the first coordinate.
    Then
    \[
        S_\varphi:=\{\varphi_\gamma:\gamma\in G\}.
    \]
\end{definition}

\begin{proposition}\label{prop:FriendlyStab}
    Let $\cal K_{r_n}^\xi$ be a Cayley extension. The stabilizer of the facet $(\cal K,1)$ under the action of $\Gamma(\cal K_{r_n}^\xi)$ is isomorphic to $$\bigcup_{\tau\in \Gamma(\cal K_{r_n}^\xi)}S_\tau.$$
\end{proposition}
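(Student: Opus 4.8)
The plan is to realize the stabilizer $S$ of the facet $(\calK,1)$ concretely as a subgroup of $\Gamma(\calK)$ via restriction to the base facet, and then to identify that subgroup with the set $\bigcup_{\tau}S_\tau$. First I would define the map $\rho\colon S\to\Gamma(\calK)$ by $\rho(\sigma)=\sigma_1$, where $\sigma_1$ is the automorphism of $\calK$ from Definition~\ref{def:s_phi}, so that $\Phi\sigma_1=((\Phi,1)\sigma)p$. Since $\sigma$ stabilizes $(\calK,1)$ we have $(\Phi,1)\sigma=(\Phi\sigma_1,1)$, and using the right-action convention this immediately gives $(\sigma\sigma')_1=\sigma_1\sigma'_1$, so $\rho$ is a group homomorphism. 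Injectivity follows from the fact (recalled in the Preliminaries) that $\Gamma(\calK_{r_n}^\xi)$ acts freely on the connected maniplex $\calK_{r_n}^\xi$: if $\sigma_1=\mathrm{id}$ then $\sigma$ fixes every flag of $(\calK,1)$, hence $\sigma=\mathrm{id}$. Thus $\rho$ is an isomorphism onto its image, and the whole problem reduces to proving that $\operatorname{im}\rho=\bigcup_{\tau}S_\tau$.

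The inclusion $\operatorname{im}\rho\subseteq\bigcup_{\tau}S_\tau$ is immediate, because for $\sigma\in S$ the element $\sigma_1$ already lies in $S_\sigma$ (take $\gamma=1$). The reverse inclusion is the heart of the argument, and the key tool is the regular action of $G$ on the facets. Given an arbitrary $\tau\in\Gamma(\calK_{r_n}^\xi)$ and $\gamma\in G$, the automorphism $\tau$ maps the facet $(\calK,\gamma)$ onto some facet $(\calK,\delta)$, with $\delta$ depending only on $\tau$ and $\gamma$; hence $(\Phi,\gamma)\tau=(\Phi\tau_\gamma,\delta)$ for all $\Phi$. Identifying $\gamma$ and $\delta^{-1}$ with the corresponding automorphisms in $G\le\Gamma(\calK_{r_n}^\xi)$ (which act on the right by $(\Phi,\epsilon)\gamma=(\Phi,\epsilon\gamma)$), I would set $\sigma:=\gamma\,\tau\,\delta^{-1}$ and compute
\[
(\Phi,1)\sigma=\big((\Phi,\gamma)\tau\big)\delta^{-1}=(\Phi\tau_\gamma,\delta)\delta^{-1}=(\Phi\tau_\gamma,1).
\]
This shows that $\sigma$ stabilizes $(\calK,1)$, so $\sigma\in S$, and that $\sigma_1=\tau_\gamma$. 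Therefore every $\tau_\gamma$ lies in $\operatorname{im}\rho$, giving $\bigcup_{\tau}S_\tau\subseteq\operatorname{im}\rho$ and hence equality.

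Combining the two parts, $\rho$ restricts to a group isomorphism $S\to\bigcup_{\tau}S_\tau$; in particular the union, which is presented only as a set, is in fact a subgroup of $\Gamma(\calK)$. I expect the main obstacle to be exactly the inclusion $\bigcup_\tau S_\tau\subseteq\operatorname{im}\rho$: one has to observe that an induced automorphism $\tau_\gamma$ arising from the restriction of $\tau$ to a possibly non-base facet can be ``transported back'' to the base facet by pre-composing with the translation $\gamma$ and post-composing with $\delta^{-1}$, and that the resulting composite genuinely stabilizes $(\calK,1)$. Throughout, care is needed with the right-action conventions, both for composing automorphisms and for the action of $G$ on the second coordinate, since it is precisely the compatibility $1\cdot\gamma=\gamma$ and $\delta\cdot\delta^{-1}=1$ that makes the computation collapse to $(\Phi\tau_\gamma,1)$.
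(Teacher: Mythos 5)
Your proposal is correct and follows essentially the same route as the paper: both identify the stabilizer of $(\calK,1)$ with $\bigcup_\tau S_\tau$ via the restriction-to-base-facet map $\sigma\mapsto\sigma_1$, observe it is a homomorphism precisely because the domain stabilizes the base facet, get injectivity from the free action of automorphisms on a connected maniplex, and prove surjectivity by transporting an arbitrary $\tau_\gamma$ back to the base facet using the regular action of $G$. Your explicit formula $\sigma=\gamma\,\tau\,\delta^{-1}$ is just a concrete rendering of the paper's orbit argument, so the two proofs coincide in substance.
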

\begin{proof}
    Let $\Gamma=\Gamma(\cal K_{r_n}^\xi)$, let $\Gamma_{(\cal K,1)}$ be the stabilizer of the facet $(\cal K,1)$  and  $S=\bigcup_{\tau\in \Gamma}S_\tau$.
    We will define a group homomorphism $\varphi:\Gamma_{(\cal K,1)}\to S$ given by $\varphi:\tau\mapsto \tau_1$ (see Definition\nobreakspace \ref {def:s_phi}).
    In other words, we consider the action of $\tau$ on $\cal K$ by looking at its action on the base facet of $\cal K_{r_n}^\xi$.
    Observe that $\varphi$ is a homomorphism only because the domain is the stabilizer of the facet $(\cal K,1)$; in general $(\tau \alpha)_1 \neq \tau_1 \alpha_1$.

    Let us show that $\varphi$ is surjective. If $\tau_\gamma$ is an element of $S$, then there is a $\tau\in\Gamma$ such that $(\Phi,\gamma)\tau = (\Phi\tau_\gamma,\sigma)$ for every $\Phi \in \cal K$ and some fixed $\sigma \in G$.
    But $(\Phi,\gamma)$ is in the same $\Gamma$-orbit as $(\Phi,1)$, and $(\Phi\tau_\gamma,\sigma)$ is in the same $\Gamma$-orbit as $(\Phi\tau_\gamma,1)$.
    This means that there is an automorphism $\tau'\in \Gamma$ such that $(\Phi,1)\tau' = (\Phi\tau_\gamma,1)$.
    In particular $\tau'$ stabilizes $(\cal K,1)$.
    Now by definition of $\tau'_1$ we have that $(\Phi,1)\tau' = (\Phi\tau'_1,1)$, but this was $(\Phi\tau_\gamma,1)$.
    By projecting on the first coordinate and using semi-regularity we get that $\tau'_1=\tau_\gamma$, in other words $\varphi:\tau'\mapsto \tau_\gamma$.
    Note that, by definition, the image of $\varphi$ is contained in $S$, so we have proved that $S$ is the image of $\varphi$; in particular it is a group.

    Note that $\tau_1=\tau'_1$ if and only if $\tau$ and $\tau'$ act the same way on the base facet, which happens if and only if $\tau=\tau'$. This proves that $\varphi$ is an isomorphism.
\end{proof}

\begin{corollary}\label{coro:SymTypeCayExt}
    Let $(\calK,r_n,\xi)$ be a Cayley extender.
    The symmetry type graph of $\calK_{r_n}^\xi$ is the quotient of $\calK_{r_n}$ by
    $\bigcup_{\tau\in \Gamma(\cal K_{r_n}^\xi)}S_\tau.$
\end{corollary}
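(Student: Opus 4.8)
The plan is to unwind the definition of the symmetry type graph and then feed it into Proposition~\ref{prop:FriendlyStab}. Write $\Gamma = \Gamma(\calK_{r_n}^\xi)$ and $H = \bigcup_{\tau \in \Gamma} S_\tau$. By definition the symmetry type graph of $\calK_{r_n}^\xi$ is the quotient $\calK_{r_n}^\xi/\Gamma$. First I would observe that, since the voltage group $G$ is contained in $\Gamma$, every $G$-orbit is contained in a $\Gamma$-orbit, so the quotient map $\calK_{r_n}^\xi \to \calK_{r_n}^\xi/\Gamma$ is constant on $G$-orbits and therefore factors through $\calK_{r_n}^\xi/G \cong \calK_{r_n}$. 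Moreover, because $G$ acts transitively on the facets, every $\Gamma$-orbit of flags meets the base facet $(\calK,1)$; hence it suffices to understand which flags of the base facet become identified in $\calK_{r_n}^\xi/\Gamma$, and this identification, read off on first coordinates, is exactly the partition of the flags of $\calK_{r_n}$ that produces the symmetry type graph.

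Next I would pin down that identification. Two base-facet flags $(\Phi,1)$ and $(\Psi,1)$ lie in the same $\Gamma$-orbit if and only if some $\tau \in \Gamma$ sends one to the other; any such $\tau$ carries the facet $(\calK,1)$ to the facet $(\calK,1)$, so it lies in the stabilizer of $(\calK,1)$, and then by the definition of $\tau_1$ we have $\Psi = \Phi\tau_1$. Conversely, if $\Psi = \Phi\tau_1$ for some $\tau$ stabilizing $(\calK,1)$, then $(\Phi,1)\tau = (\Psi,1)$. Thus the flags of $\calK_{r_n}$ that get identified are precisely the orbits of the induced maps $\tau_1$ as $\tau$ ranges over the stabilizer of $(\calK,1)$. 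Proposition~\ref{prop:FriendlyStab} identifies that stabilizer, via $\tau \mapsto \tau_1$, with $H$, so these orbits are exactly the $H$-orbits of flags of $\calK$. This shows that the vertex identifications defining the symmetry type graph coincide with those defining $\calK_{r_n}/H$.

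The step I expect to be the main obstacle is justifying that $\calK_{r_n}/H$ is a bona fide premaniplex even though, as noted before Proposition~\ref{prop:FriendlyStab}, the elements of $H$ do not act as automorphisms of $\calK_{r_n}$: they permute the flags of $\calK$ but need not preserve the edges of color $n$. For the colors $i<n$ there is no issue, since each $\tau_1$ is an automorphism of $\calK$ and hence commutes with $r_i$. For color $n$ I would argue by lifting. Given $\tau$ in the stabilizer of $(\calK,1)$ with $\tau_1 = h$, the map $\tau$ is an automorphism of $\calK_{r_n}^\xi$ and therefore commutes with $r_n$. Applying this equality to $(\Phi,1)$ and comparing first coordinates yields $r_n(\Phi h) = (r_n\Phi)\,\tau_{\xi(\Phi_{n-1})}$, and since $\tau_{\xi(\Phi_{n-1})} \in S_\tau \subseteq H$, the flags $r_n\Phi$ and $r_n(\Phi h)$ lie in the same $H$-orbit. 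Hence the partition into $H$-orbits is compatible with every $r_i$, so the quotient is well defined and equals the symmetry type graph, as claimed.
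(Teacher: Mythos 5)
Your proposal is correct and follows essentially the same route as the paper: the paper's (implicit) proof of this corollary is exactly the reduction to the base facet via transitivity of the voltage group, followed by Proposition~\ref{prop:FriendlyStab} identifying the facet stabilizer with $\bigcup_{\tau}S_\tau$ via $\tau\mapsto\tau_1$. Your final lifting computation establishing compatibility of the $H$-orbit partition with $r_n$ is the same calculation the paper packages later as Proposition~\ref{prop:s_phi} and Corollary~\ref{coro:S_tau} (the $r_n$-friendliness of $S_\tau$), which is how the paper justifies that the quotient $\calK_{r_n}/H$ is well defined.
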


We now consider the problem of what possible symmetry type graphs we obtain from varying the voltage assignment on a fixed pre-extension $\calK_{r_n}$. 
To start with, we note that (having fixed a voltage assignment), each set $S_\alpha$ has the property that, if $\tau \in S_\alpha$, then for every flag $\Phi$ of $\cal K$, the flags $r_n \Phi$ and $r_n (\Phi \tau)$ are in the same orbit of $\Gamma(\cal K)$ (moreover, they differ by an element of $S_\alpha$ itself).
This ends up being the key property of these sets, and we will need a minor generalization in the following subsection, so let us introduce those ideas here.

\begin{definition}
Consider two pre-extensions $(\calK, r_n)$ and $(\calK, r_n')$.
We will say that a set $S\subset\Gamma(\calK)$ is \emph{$(r_n,r_n')$-friendly} if for every flag $\Phi$ of $\calK$ and every $\tau\in S$ there is some $\os{\tau}$ in $S$ such that $r_n'(\Phi\tau) = (r_n\Phi)\os{\tau}$. We will say that $\os{\tau}$ is an  \emph{$(r_n, r_n')$-friend} of $\tau$, or just a \emph{friend} if $r_n$ and $r_n'$ are clear from context. 
If a set is $(r_n,r_n)$-friendly we simply say that it is \emph{$r_n$-friendly.} \end{definition}

Consider two pre-extensions as in the definition. If $\Phi$ and $\Psi$ are in the same facet of $\cal K$, there is a monodromy $w$ that does not use $r_{n-1}$ and such that $\Psi=w\Phi$.
Suppose that $r_n'(\Phi\tau) = (r_n\Phi)\os{\tau}$.
Using the fact that $\tau$ and $\os{\tau}$ are automorphisms of $\cal K$ and hence they commute with $w$ and that $r_n$ and $r_n'$ also commute with $w$ (since $w$ does not use $r_{n-1}$) we get the following calculation:
\[
\begin{aligned}
        r_n'(\Psi\tau) &= r_n'((w\Phi)\tau)\\
        &= r_n' (w (\Phi\tau))\\
        &= w (r_n'(\Phi\tau))\\
        &= w ((r_n\Phi)\os{\tau})\\
        &= (w (r_n\Phi))\os{\tau}\\
        &= (r_n(w \Phi))\os{\tau}\\
        &= (r_n\Psi)\os{\tau}.\\
\end{aligned}
\]
This shows that $\os{\tau}$ does not depend on the flag $\Phi$ itself, but only on its facet on $\cal K$.
Therefore, we may write $\os{\tau}_F$ for the element in $S$ satisfying that $r_n'(\Phi\tau) = (r_n\Phi)\os{\tau}_F$ for every flag $\Phi$ in the facet $F$.

\begin{lemma}\label{lemma:r_n-friendly2}
    Let $(\cal K,r_n)$ and $(\cal K,r_n')$ be pre-extenders. Then:
    \begin{enumerate}
        \item If $S_1$ is $(r_n,r_n')$-friendly and $S_2$ is any set of isomorphisms from $\calK_{r_n}$ to $\calK_{r_n'}$, then $S_2$ and $S_1\cup S_2$ are $(r_n,r_n')$-friendly.
        \item If $S$ is $(r_n,r_n')$-friendly, then $S^{-1}$ is $(r_n',r_n)$-friendly.
        \item If $S_1$ is $(r_n,r_n')$-friendly and $S_2$ is $(r_n',r_n'')$-friendly, then $S_1S_2$ is $(r_n,r_n'')$-friendly.
    \end{enumerate}
\end{lemma}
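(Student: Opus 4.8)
The plan is to verify the three parts by direct computation, exhibiting the required friend explicitly in each case. The one preliminary observation worth isolating concerns the maps appearing in part (i): a bijection $\tau \in \Gamma(\calK)$ is an isomorphism $\calK_{r_n} \to \calK_{r_n'}$ precisely when it carries color-$n$ edges to color-$n$ edges, that is, when $r_n'(\Phi\tau) = (r_n\Phi)\tau$ for every flag $\Phi$. In the language of the lemma, this says that such a $\tau$ is its own $(r_n, r_n')$-friend.

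With this in hand, part (i) is immediate. Each $\tau \in S_2$ is its own friend, so $S_2$ is $(r_n, r_n')$-friendly. For $S_1 \cup S_2$, take $\tau$ in the union and a flag $\Phi$: if $\tau \in S_1$ use the friend guaranteed by the friendliness of $S_1$, which lies in $S_1$, and if $\tau \in S_2$ use $\tau$ itself; either way the friend lies in $S_1 \cup S_2$.

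For part (ii), write a typical element of $S^{-1}$ as $\sigma = \tau^{-1}$ with $\tau \in S$, and fix a flag $\Phi$. Applying the $(r_n, r_n')$-friendliness of $S$ to the flag $\Phi\tau^{-1}$ produces $\os{\tau} \in S$ with $r_n'((\Phi\tau^{-1})\tau) = (r_n(\Phi\tau^{-1}))\os{\tau}$, that is, $r_n'\Phi = (r_n(\Phi\sigma))\os{\tau}$; right-multiplying by $\os{\tau}^{-1}$ gives $r_n(\Phi\sigma) = (r_n'\Phi)\os{\tau}^{-1}$, so $\os{\tau}^{-1} \in S^{-1}$ is a $(r_n', r_n)$-friend of $\sigma$. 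This is the step most easily mishandled, since friendliness must be invoked at the shifted flag $\Phi\tau^{-1}$ and the resulting relation inverted in the correct order, automorphisms acting on the right.

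For part (iii), write an element of $S_1 S_2$ as $\tau_1 \tau_2$ with $\tau_1 \in S_1$ and $\tau_2 \in S_2$, fix a flag $\Phi$, and peel off the two conditions in turn: the $(r_n', r_n'')$-friendliness of $S_2$ applied to $\Phi\tau_1$ yields $\os{\tau_2} \in S_2$ with $r_n''((\Phi\tau_1)\tau_2) = (r_n'(\Phi\tau_1))\os{\tau_2}$, and the $(r_n, r_n')$-friendliness of $S_1$ applied to $\Phi$ gives $\os{\tau_1} \in S_1$ with $r_n'(\Phi\tau_1) = (r_n\Phi)\os{\tau_1}$. Substituting yields $r_n''(\Phi\tau_1\tau_2) = (r_n\Phi)(\os{\tau_1}\,\os{\tau_2})$, and $\os{\tau_1}\,\os{\tau_2} \in S_1 S_2$ is the desired friend. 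In all three parts the only real care is the bookkeeping: checking that each friend produced lands in the claimed set and depends only on the facet of $\Phi$, in accordance with the discussion preceding the lemma. I anticipate no genuine obstacle beyond this.
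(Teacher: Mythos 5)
Your proof is correct and follows essentially the same route as the paper's: isomorphisms $\calK_{r_n}\to\calK_{r_n'}$ serve as their own friends for part (i), the inverse relation is obtained by invoking friendliness at the shifted flag $\Phi\tau^{-1}$ and inverting for part (ii), and the friends are chained as $\os{\tau_1}\,\os{\tau_2}$ for part (iii), exactly as in the paper's computation.
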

\begin{proof}
First notice that if $\tau$ is an isomorphism from $\cal K_{r_n}$ to $\cal K_{r_n'}$ we can define $\os{\tau}_F=\tau$ for any facet $F$.
This proves that any set of such isomorphisms is $(r_n,r_n')$-friendly and that adding isomorphisms to an $(r_n,r_n')$-friendly set preserves the $(r_n,r_n')$-friendliness.

Let $S$ be $(r_n,r_n')$-friendly and let $\tau\in S$. Let $\Phi$ be a flag in a facet $F$ of $\cal K$.

Then
\[
    r_n'\Phi = r_n'(\Phi\tau^{-1}\tau) =  (r_n(\Phi\tau^{-1}))\os{\tau}_{F\tau^{-1}},
\]
Which implies that
\[
    r_n(\Phi\tau^{-1}) = (r_n'\Phi)\os{\tau}_{F\tau^{-1}}^{-1}.
\]
In other words, $S^{-1}$ is $(r_n',r_n)$-friendly and $\os{(\tau^{-1})}_F = \os{\tau}_{F\tau^{-1}}^{-1}$.

Now let $S_1$ and $S_2$ be $(r_n,r_n')$-friendly and $(r_n',r_n'')$-friendly, respectively, and let $\sigma\in S_1$ and $\tau\in S_2$.
Let $\Phi$ be a flag in a facet $F$.
If we calculate $r_n''(\Phi\sigma\tau)$ we get
\[
    r_n''(\Phi\sigma\tau) = (r_n'(\Phi\sigma))\os{\tau}_{F\sigma} = (r_n\Phi)\os{\sigma}_F\os{\tau}_{F\sigma}.
\]
This shows that $S_1 S_2$ is also $(r_n,r_n'')$-friendly (with $\os{(\sigma\tau)}_F = \os{\sigma}_F\os{\tau}_{F\sigma}$).
\end{proof}

By making $r_n=r_n'=r_n''$ we get the following lemma as a corollary.
\begin{lemma}\label{lemma:r_n-friendly}
    Let $(\cal K,r_n)$ be a pre-extender:
    \begin{enumerate}
        \item \label{item:rn-f_trivial} If $S_1$ is $r_n$-friendly and $S_2$ is any subset of $\Gamma(\calK_{r_n})$, then $S_2$ and $S_1\cup S_2$ are $r_n$-friendly.
        In particular $\{1\}$ is $r_n$-friendly.
        \item  \label{item:rn-f_inverse} If $S$ is $r_n$-friendly, then $S^{-1}$ is $r_n$-friendly.
        \item \label{item:rn-f_union} If $S_1$ and $S_2$ are $r_n$-friendly, then $S_1S_2$ is $r_n$-friendly.
    \end{enumerate}
    In particular, the group generated by the union of $r_n$-friendly sets is also $r_n$-friendly.\end{lemma}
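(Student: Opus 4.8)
The plan is to read parts~\ref{item:rn-f_trivial}--\ref{item:rn-f_union} directly off Lemma~\ref{lemma:r_n-friendly2} by specializing $r_n = r_n' = r_n''$, and then to bootstrap the closing assertion about generated groups from these three parts together with one elementary observation.

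First I would record two identifications that make the specialization literal. When $r_n = r_n'$, an $(r_n,r_n')$-friendly set is exactly an $r_n$-friendly set, and an isomorphism from $\calK_{r_n}$ to $\calK_{r_n'}=\calK_{r_n}$ is precisely an automorphism of $\calK_{r_n}$, i.e.\ an element of $\Gamma(\calK_{r_n})$. With these in hand, the first part of Lemma~\ref{lemma:r_n-friendly2} (with $r_n = r_n'$) says that adjoining a subset of $\Gamma(\calK_{r_n})$ to an $r_n$-friendly set preserves $r_n$-friendliness and that such a subset is itself $r_n$-friendly; this is exactly part~\ref{item:rn-f_trivial}, and taking the adjoined set to be $\{1\}$ gives the ``in particular'' that $\{1\}$ is $r_n$-friendly. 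Likewise the second part with $r_n=r_n'$ yields part~\ref{item:rn-f_inverse}, and the third part with $r_n = r_n' = r_n''$ yields part~\ref{item:rn-f_union}.

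For the final claim I would first note an elementary fact, immediate from the definition: a union of $r_n$-friendly sets is $r_n$-friendly, since any element of the union lies in one of the constituent sets and the friend supplied there already lies in the union. Now let $\{S_i\}$ be a family of $r_n$-friendly sets and set $S = \bigcup_i S_i$, which is $r_n$-friendly by this observation. By part~\ref{item:rn-f_inverse} so is $S^{-1}$, whence $T := S \cup S^{-1} \cup \{1\}$ is $r_n$-friendly (a union of $r_n$-friendly sets, using that $\{1\}$ is $r_n$-friendly). Applying part~\ref{item:rn-f_union} and induction on $k$, each power $T^{k}$ is $r_n$-friendly. Since $T \supseteq \{1\} \cup S \cup S^{-1}$ we have $\gen{\bigcup_i S_i} = \bigcup_{k \geq 0} T^{k}$, so the generated group is once more a union of $r_n$-friendly sets and is therefore $r_n$-friendly.

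The argument is entirely formal, so I expect no serious obstacle. The only step deserving care is the last one: one must express $\gen{\bigcup_i S_i}$ as the ascending union $\bigcup_{k}T^{k}$ of the friendly sets $T^{k}$ so that the union principle applies, rather than trying to produce a single friend for an arbitrary word directly. Everything else is a verbatim specialization of Lemma~\ref{lemma:r_n-friendly2}.
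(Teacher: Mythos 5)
Your proposal is correct and follows essentially the same route as the paper: parts (i)--(iii) are obtained by specializing Lemma~\ref{lemma:r_n-friendly2} to $r_n = r_n' = r_n''$, and the final claim is proved by writing each element of the generated group inside a finite product of $r_n$-friendly sets (the paper uses $S_{i_1}^{\pm 1}\cdots S_{i_k}^{\pm 1}$ where you use powers $T^k$ of $T = S \cup S^{-1} \cup \{1\}$, which is only a cosmetic difference). Your explicit statement of the union principle --- that a union of $r_n$-friendly sets is $r_n$-friendly because the friend supplied by a constituent set already lies in the union --- is a nice touch; the paper uses this fact implicitly in its closing sentence.
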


\begin{proof}
To prove the last claim, let
\[ S = \bigcup_{i \in I} S_i \]
be an arbitrary union of $r_n$-friendly sets. If $\tau \in \langle S \rangle$, then $\tau$ has finite length and $\tau \in S_{i_1}^{\pm 1} \cdots S_{i_k}^{\pm 1}$ for some $k$. Each $S_{i_1}^{\pm 1}$ is $r_n$-friendly by Item\nobreakspace \ref {item:rn-f_inverse}, and thus this set is $r_n$-friendly by Item\nobreakspace \ref {item:rn-f_union}, meaning that $\tau$ has an $r_n$-friend in this set for each flag $\Phi$, and thus $\tau$ has an $r_n$-friend in $S$ for each $\Phi$.\end{proof}

\begin{corollary}\label{coro:r_n-friendly-grp}
    Given $(\cal K,r_n)$, there is a unique greatest $r_n$-friendly set $\frnd(\cal K,r_n)$ and it is a subgroup of $\Gamma(\cal K)$ containing $\Gamma(\cal K_{r_n})$. In particular, if $r_n = Id$, then $\frnd(\cal K, r_n) = \Gamma(\calK)$.
\end{corollary}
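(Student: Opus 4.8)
The plan is to construct $\frnd(\cal K, r_n)$ directly as the union of all $r_n$-friendly sets, and then use the closure properties packaged in Lemma~\ref{lemma:r_n-friendly} to upgrade that union to a subgroup.

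First I would define $\frnd(\cal K, r_n)$ to be the union of all $r_n$-friendly subsets of $\Gamma(\cal K)$. This is the unique greatest $r_n$-friendly set essentially by construction. On one hand, any $r_n$-friendly set is contained in it. On the other hand, it is itself $r_n$-friendly: if $\tau$ lies in the union, then $\tau \in S$ for some $r_n$-friendly $S$, and the friend $\os{\tau}$ provided by the friendliness of $S$ lies in $S$, hence in the union. Uniqueness is then immediate from maximality, since two greatest friendly sets would each contain the other.

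Next I would show $\frnd(\cal K, r_n)$ is a subgroup. The key input is the final claim of Lemma~\ref{lemma:r_n-friendly}, that the group generated by a union of $r_n$-friendly sets is again $r_n$-friendly. Applying this to $\frnd(\cal K, r_n)$ itself, the group $\langle \frnd(\cal K, r_n)\rangle$ is $r_n$-friendly, so it is one of the sets appearing in the union and is therefore contained in $\frnd(\cal K, r_n)$. Together with the trivial inclusion $\frnd(\cal K, r_n) \subseteq \langle \frnd(\cal K, r_n)\rangle$, this forces equality, so $\frnd(\cal K, r_n)$ is closed under products and inverses; being a union of subsets of $\Gamma(\cal K)$, it is a subgroup of $\Gamma(\cal K)$. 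For the containment $\Gamma(\cal K_{r_n}) \subseteq \frnd(\cal K, r_n)$, I would invoke Item~\ref{item:rn-f_trivial} of Lemma~\ref{lemma:r_n-friendly} with $S_2 = \Gamma(\cal K_{r_n})$, which states that any subset of $\Gamma(\cal K_{r_n})$ is $r_n$-friendly; hence $\Gamma(\cal K_{r_n})$ is itself an $r_n$-friendly set and sits inside the greatest one. Finally, for the special case $r_n = Id$, I would recall that $\Gamma(\cal K_{r_n})$ consists exactly of the automorphisms of $\cal K$ commuting with the monodromy $r_n$; when $r_n = Id$ every automorphism commutes with it, so $\Gamma(\cal K_{Id}) = \Gamma(\cal K)$. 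The containment just established gives $\Gamma(\cal K) \subseteq \frnd(\cal K, Id)$, while $\frnd(\cal K, Id) \subseteq \Gamma(\cal K)$ holds by definition, yielding equality.

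I do not anticipate a genuine obstacle here, as the statement is a formal consequence of the closure properties already proved. The only points requiring a moment's care are the logical step that the union of all friendly sets is itself friendly (because the witness $\os{\tau}$ must lie in the \emph{same} set $S$, not merely somewhere in $\Gamma(\cal K)$) and the bootstrapping observation that a maximal friendly set is automatically closed under the group operations, since the group it generates is friendly and hence cannot be strictly larger.
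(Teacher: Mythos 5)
Your proposal is correct and follows essentially the intended route: the paper states this as an immediate corollary of Lemma\nobreakspace \ref {lemma:r_n-friendly}, and your argument—taking the union of all $r_n$-friendly sets, noting it is friendly (the witness $\os{\tau}$ stays in the same $S$), bootstrapping to a subgroup via the lemma's final claim that $\langle\,\text{union of friendly sets}\,\rangle$ is friendly, and getting $\Gamma(\calK_{r_n})\subseteq\frnd(\calK,r_n)$ from Item\nobreakspace \ref {item:rn-f_trivial}—is exactly the derivation the paper leaves implicit. Your handling of the case $r_n=Id$ (using that $\Gamma(\calK_{Id})=\Gamma(\calK)$ since every automorphism commutes with the identity monodromy) is also the intended one.
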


We will call the group $\frnd(\cal K,r_n)$ the \emph{$r_n$-friendly group of $\cal K$}. We say that an automorphism $\tau$ of $\Gamma(\cal K)$ is \emph{$r_n$-friendly} if it is an element of $\frnd(\cal K,r_n)$.

\begin{remark}
It is tempting to start by defining an automorphism $\tau$ to be $r_n$-friendly if, for every flag $\Phi$ there is an automorphism $\os{\tau}$ such that $r_n(\Phi \tau) = (r_n \Phi) \os{\tau}$. However, this misses an important nuance; namely, even if $\tau$ satisfies this condition, it need not be the case that all of its $r_n$-friends $\os{\tau}$ satisfy this condition. 
\end{remark}

We now describe a more constructive method for finding the $r_n$-friendly group of a pre-extension.

We will construct two sequences of partitions of $\cal K$, $\{\Part_k\}_{k\in \bN}$ and $\{\Part_k'\}_{k\in\bN}$ and such that
$\Part_{k+1}$ is a refinement of $\Part'_{k}$ which is itself a refinement of $\Part_{k}$ for every $k \in \bN$.
We will also use these partitions to create a decreasing sequence $\{G_k\}_{k\in\bN}$ of groups whose intersection will be $\frnd(\cal K,r_n)$.

Given a partition $\Part$ we use the expression $\Phi\sim_{\Part} \Psi$ to say that $\Phi$ and $\Psi$ are in the same element of $\Part$. 

Let us start by defining $\Part_0 = \{\cal K\}$.
We will use recursion for the rest of the construction.
If $\Part_k$ is defined, let us define
\[
    G_k=\{\tau \in \Gamma(\cal K): \Phi\sim_{\Part_k} \Phi\tau\, \  \text{for all}\ \Phi \in \cal K\}.
\]
Now define
\[
    \Part_k' = \{\Phi G_k: \Phi \in \cal K\}.
\]
Note that $\Part_k'$ refines $\Part_k$ by the definition of $G_k$.

Given two sets $Q,R\subset \cal K$, we define $A_{QR}=\{\Phi\in Q: r_n\Phi \in R\}$. Then we define:
\[
    \Part_{k+1}=\{A_{QR}:Q,R\in \Part'_k\}.
\]
In other words, $\Phi\sim_{\Part_{k+1}} \Psi$ if and only if $\Phi\sim_{\Part_k'} \Psi$ and $r_n\Phi\sim_{\Part_k'} r_n\Psi$.
Note that $\Part_{k+1}$ refines $\Part_k'$, as $A_{QR}$ is a subset of $Q$.
Since $\Part_{k+1}$ refines $\Part_k$ (by transitivity), we also know that $G_{k+1}\leq G_k$.

Let us illustrate the first few steps of the construction. As mentioned, $\Part_0 = \calK$, and so $G_0 = \Gamma(\calK)$. Then $\Part_0'$ consists of the flag-orbits of $\calK$. To obtain $\Part_1$, we essentially partition each flag-orbit; given a flag $\Phi$ in that orbit, we look at what orbit $r_n \Phi$ is in. So we have effectively labeled each flag $\Phi$ of $\calK$ by a pair consisting of its flag-orbit and the flag-orbit of $r_n \Phi$. Now $G_1$ is the subgroup of $\Gamma(\calK)$ that respects this new partition, $\Part_1'$ is the set of orbits under $G_1$, and we continue the process.

\begin{proposition}\label{prop:r_n-friendly-as-intersection}
    Let $(\cal K,r_n)$ be a pre-extender and let $\Part_k, G_k$ and $\Part'_k$ be constructed as above. Then,
    \[
        \frnd(\cal K,r_n) = \bigcap_{k=0}^\infty G_k.
    \]
\end{proposition}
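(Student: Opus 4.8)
The plan is to prove the two inclusions $\frnd(\cal K, r_n)\subseteq \bigcap_{k} G_k$ and $\bigcap_k G_k \subseteq \frnd(\cal K, r_n)$ separately; write $G_\infty := \bigcap_{k=0}^\infty G_k$. Throughout I would use two facts without further comment: since $\cal K$ is a connected maniplex, $\Gamma(\cal K)$ acts freely on its flags; and, by construction, $\Part_k'$ is exactly the partition of $\cal K$ into $G_k$-orbits, so $\Phi\sim_{\Part_k'}\Psi$ holds precisely when some element of $G_k$ carries $\Phi$ to $\Psi$.

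For the inclusion $\frnd(\cal K,r_n)\subseteq G_\infty$, I would show by induction on $k$ that $\frnd(\cal K,r_n)\subseteq G_k$. The base case is immediate since $G_0=\Gamma(\cal K)$. For the inductive step, assuming $\frnd(\cal K,r_n)\subseteq G_k$, take $\tau\in\frnd(\cal K,r_n)$ and any flag $\Phi$; I must verify the two conditions characterizing $\Phi\sim_{\Part_{k+1}}\Phi\tau$. The condition $\Phi\sim_{\Part_k'}\Phi\tau$ holds because $\tau\in G_k$ places $\Phi$ and $\Phi\tau$ in a common $G_k$-orbit. For $r_n\Phi\sim_{\Part_k'}r_n(\Phi\tau)$, I would invoke that $\frnd(\cal K,r_n)$ is $r_n$-friendly (Corollary\nobreakspace \ref {coro:r_n-friendly-grp}): there is a friend $\os{\tau}\in\frnd(\cal K,r_n)\subseteq G_k$ with $r_n(\Phi\tau)=(r_n\Phi)\os{\tau}$, so $r_n\Phi$ and $r_n(\Phi\tau)$ lie in the same $G_k$-orbit. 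Hence $\tau\in G_{k+1}$, completing the induction.

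The reverse inclusion is where the real content lies. It suffices to prove that $G_\infty$ is itself $r_n$-friendly, since $\frnd(\cal K,r_n)$ is the greatest $r_n$-friendly set by Corollary\nobreakspace \ref {coro:r_n-friendly-grp}. So I would fix $\tau\in G_\infty$ and a flag $\Phi$, and aim to produce an $r_n$-friend of $\tau$ lying in $G_\infty$. For each $k$, the membership $\tau\in G_{k+1}$ unwinds, through the defining description of $\Part_{k+1}$, to $r_n\Phi\sim_{\Part_k'}r_n(\Phi\tau)$; that is, there exists $\sigma_k\in G_k$ with $(r_n\Phi)\sigma_k=r_n(\Phi\tau)$.

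The main obstacle, and the crux of the argument, is that a priori these candidate friends $\sigma_k$ could be different automorphisms living in different groups $G_k$, so the existence of a single friend inside the intersection is not automatic — this is precisely the nuance flagged in the remark preceding the statement. The observation that dissolves the difficulty is freeness of the action: the equation $(r_n\Phi)\sigma_k=r_n(\Phi\tau)$ determines $\sigma_k$ uniquely, so all the $\sigma_k$ coincide with one automorphism $\os{\tau}$. This single element satisfies $\os{\tau}\in G_k$ for every $k$, hence $\os{\tau}\in G_\infty$, and it is the required friend. Therefore $G_\infty$ is $r_n$-friendly, giving $G_\infty\subseteq\frnd(\cal K,r_n)$ and, together with the first inclusion, the desired equality.
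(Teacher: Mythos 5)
Your proposal is correct and follows essentially the same route as the paper: an induction showing $\frnd(\calK,r_n)\leq G_k$ for all $k$ (using $r_n$-friendliness of $\frnd(\calK,r_n)$ to verify both conditions defining $\Part_{k+1}$), followed by showing the intersection $\bigcap_k G_k$ is itself $r_n$-friendly, where freeness of the $\Gamma(\calK)$-action forces the candidate friends $\sigma_k$ to be a single automorphism lying in every $G_k$. The crux you identify — that uniqueness from freeness is what places the friend inside the intersection — is exactly the key step in the paper's proof.
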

\begin{proof}
    First we will prove by induction that $\frnd(\cal K,r_n)\leq G_k$.
    For $k=0$ we get that $G_0=\Gamma(\cal K)$, so there is nothing to prove.
    Now suppose that $G_k$ contains $\frnd(\cal K,r_n)$.
    Let $\Phi$ be a flag of $\cal K$ and let $\tau\in \frnd(\cal K,r_n)$.
    Then $r_n(\Phi\tau)=(r_n\Phi)\os{\tau}$ for some $\os{\tau}\in \frnd(\cal K,r_n)\leq G_k$, in other words, $r_n(\Phi\tau)\sim_{\Part_k'} r_n\Phi$.
    On the other hand $\Phi\tau \sim_{\Part_k'} \Phi$ (with $\tau$ as witness).
    Therefore, $\Phi\tau \sim_{\Part_{k+1}} \Phi$.
    But this is precisely the definition of $\tau$ being in $G_{k+1}$.
    It follows by induction that $\frnd(\cal K,r_n) \leq \bigcap_{k=0}^\infty G_k$.

    Now we will prove that $\bigcap_{k=0}^\infty G_k$ is $r_n$-friendly.
    Let $\tau \in \bigcap_{k=0}^\infty G_k$ and let $\Phi$ be any flag of $\cal K$.
    For every $k$ we know that $\tau \in G_{k+1}$, therefore $\Phi \sim _{\Part_{k+1}'} \Phi\tau$.
    Since $\Part_{k+1}'$ refines $\Part_{k+1}$ we get that $\Phi \sim_{\Part_{k+1}} \Phi \tau$.
    By definition of $\Part_{k+1}$, in particular we have that $r_n\Phi \sim_{\Part_k'} r_n(\Phi\tau)$.
    This means that there exists some $\os{\tau}_k\in G_k$ such that $r_n(\Phi\tau) = (r_n\Phi)\os{\tau}_k$.
    But, since the action of $\Gamma(\cal K)$ on $\cal K$ is free, we get that $\os{\tau}_k$ actually does not depend on $k$, so this unique $\os{\tau}_k$ is in $\bigcap_{k=0}^{\infty}G_{k}$.
    This proves that $\bigcap_{k=0}^{\infty} G_{k} $ is $r_n$-friendly, and therefore contained in $\frnd(\cal K,r_n)$.
\end{proof}

Note that in the previous proposition the common limit of the partitions $\Part_k$ and $\Part'_k$ as $k$ goes to infinity are the orbits of flags of $\cal K$ under the action of $\frnd(\cal K, r_n)$.

To prove that each automorphism of $\calK_{r_n}$ induces an $r_n$-friendly set we first prove the following more general proposition:

\begin{proposition}\label{prop:s_phi}
    Given a homomorphism $\phi:\cal K_{r_n}^\xi\to \cal K_{r_n'}^{\xi'}$,
    the set $S_\phi$ is $(r_n,r_n')$-friendly.
\end{proposition}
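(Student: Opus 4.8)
The plan is to unwind the definition of $S_\phi$ using a single structural fact: a premaniplex homomorphism commutes with every monodromy, in particular with the color-$n$ monodromy. Let me write $\rho := r_n \in W^{n+1}$ for that monodromy (using a separate symbol to avoid conflating it with the extender permutation $r_n$ acting on $\cal K$). Recall that $\rho$ acts on the derived graph $\cal K_{r_n}^\xi$ by $\rho(\Phi,\gamma) = (r_n\Phi,\, \xi(\Phi_{n-1})\gamma)$ and on $\cal K_{r_n'}^{\xi'}$ by $\rho(\Psi,\delta) = (r_n'\Psi,\, \xi'(\Psi_{n-1})\delta)$. The proposition will follow by computing $(\rho(\Phi,\gamma))\phi$ in two ways and comparing first coordinates.

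First I would record two preliminary observations, both obtained by moving within a fixed facet $(\cal K,\gamma)$. Since the colors $0,\dots,n-1$ carry trivial voltage, any two flags of that facet are joined by a word $w\in\langle r_0,\dots,r_{n-1}\rangle$ that leaves the second coordinate unchanged; applying $\phi$ and using that $\phi$ commutes with $w$ shows, on the one hand, that $\phi_\gamma$ is a color-preserving endomorphism of $\cal K$ (and, exactly as for the $\tau_\gamma$ discussed before Definition\nobreakspace \ref {def:s_phi}, an automorphism, since the covering $\phi$ restricts to an isomorphism of the connected facet onto its image; hence $S_\phi\subseteq\Gamma(\cal K)$ as required). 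On the other hand, it shows that there is a single $\delta_\gamma\in G'$ with $(\Phi,\gamma)\phi=(\Phi\phi_\gamma,\,\delta_\gamma)$ for \emph{all} $\Phi$, i.e.\ the second coordinate does not depend on $\Phi$. This well-definedness of $\delta_\gamma$ is the only mildly technical point, and it is precisely where connectivity of the facet and triviality of the low-color voltages are used.

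The core is then a two-way computation. Fix a flag $\Phi$ with facet $F=\Phi_{n-1}$ and fix $\gamma$, so $\tau=\phi_\gamma\in S_\phi$. Computing $(\rho(\Phi,\gamma))\phi$ directly gives $(r_n\Phi,\,\xi(F)\gamma)\phi=\big((r_n\Phi)\phi_{\xi(F)\gamma},\,\ast\big)$ by the very definition of $\phi_{\xi(F)\gamma}$; computing $\rho((\Phi,\gamma)\phi)=\rho(\Phi\phi_\gamma,\delta_\gamma)$ gives $\big(r_n'(\Phi\phi_\gamma),\,\ast\big)$. Since $\phi$ commutes with $\rho$, the two first coordinates coincide, yielding
\[
    r_n'(\Phi\phi_\gamma) = (r_n\Phi)\,\phi_{\xi(F)\gamma}.
\]

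Finally I would read off the conclusion: setting $\os{\tau}:=\phi_{\xi(F)\gamma}$, which lies in $S_\phi$ because $\xi(F)\gamma\in G$, the displayed identity is exactly $r_n'(\Phi\tau)=(r_n\Phi)\os{\tau}$, so $S_\phi$ meets the definition of an $(r_n,r_n')$-friendly set. I do not expect a genuine obstacle; the argument is a direct calculation whose only delicate ingredient is the independence of $\delta_\gamma$ from $\Phi$ (equivalently, that $\phi$ carries the whole facet $(\cal K,\gamma)$ into a single facet), which is exactly what makes $\phi_{\xi(F)\gamma}$ an honest element of $S_\phi$ rather than merely some automorphism of $\cal K$.
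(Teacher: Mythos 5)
Your proof is correct and is essentially the same argument as the paper's: the paper also evaluates $\phi$ on the $n$-adjacent flag $(\Phi,\gamma)^n=(r_n\Phi,\xi(F)\gamma)$ in two ways, uses that homomorphisms commute with the monodromy $r_n$, and compares first coordinates to obtain $r_n'(\Phi\phi_\gamma)=(r_n\Phi)\phi_{\xi(F)\gamma}$, taking $\os{\tau}=\phi_{\xi(F)\gamma}\in S_\phi$ as the friend. Your preliminary observations (that $\phi$ maps each facet into a single facet and that $\phi_\gamma$ is an automorphism) make explicit points the paper leaves implicit, but they do not change the route.
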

\begin{proof}
    We want to prove that $r_n (\Phi\phi_\gamma) = (r_n\Phi)\phi_{\gamma_1}$ for some $\gamma_1\in G$.
    Let $F$ be the facet of $\cal K$ containing $\Phi$. Then,
    \[
    \begin{aligned}
      \phi (r_n\Phi,\xi(F)\gamma) &= \phi((\Phi,\gamma)^n)\\
      &= (\phi(\Phi,\gamma))^n\\
      &= (\Phi\phi_\gamma,\gamma')^n  \quad \text { for some } \gamma'\in G'\\
      &= (r_n(\Phi\phi_\gamma),\gamma'') \quad \text { for some } \gamma''\in G'.\\
    \end{aligned}
    \]
If we apply $p$ on both sides we get $(r_n\Phi)\phi_{\xi(F)\gamma} = r_n(\Phi\phi_\gamma)$, so by making $\gamma_1 = \xi(F)\gamma$ we get our result.
\end{proof}

\begin{corollary}\label{coro:S_tau}
    Let $(\cal K, r_n, \xi)$ be a Cayley extender and let $\tau\in \Gamma(\cal K_{r_n}^\xi)$. Then the set $S_\tau$ is $r_n$-friendly.
\end{corollary}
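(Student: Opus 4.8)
The plan is to obtain this as an immediate specialization of Proposition~\ref{prop:s_phi}. The set $S_\tau$ attached to an automorphism $\tau$ was introduced (in the paragraph preceding Definition~\ref{def:s_phi}) by exactly the same formula that defines $S_\varphi$ for a covering $\varphi$, namely $\Phi\tau_\gamma = ((\Phi,\gamma)\tau)p$ with $p$ the projection onto the first coordinate. So the whole content of the corollary is already carried by the proposition; the only thing I need to verify is that $\tau$ fits its hypotheses, and that the specialized conclusion reduces to $r_n$-friendliness.

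Concretely, I would take $r_n' = r_n$, $\xi' = \xi$, and $G' = G$, so that the target Cayley extender $(\calK, r_n', \xi')$ coincides with the source $(\calK, r_n, \xi)$, and consider the covering $\calK_{r_n}^\xi \to \calK_{r_n}^\xi$ given by $\tau$ itself. The key observation is that $\tau \in \Gamma(\calK_{r_n}^\xi)$ is an automorphism, hence in particular a surjective homomorphism of premaniplexes, and so (by the discussion of coverings in the preliminaries, where every homomorphism onto a connected premaniplex is a covering) it qualifies as a homomorphism $\varphi = \tau$ in the sense required by Definition~\ref{def:s_phi}. With this identification the set $S_\varphi$ produced by that definition is literally the set $S_\tau$ appearing in the statement.

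Applying Proposition~\ref{prop:s_phi} to $\varphi = \tau$ then yields that $S_\tau$ is $(r_n, r_n')$-friendly, i.e.\ $(r_n, r_n)$-friendly. By the last sentence of the definition of friendliness, a set that is $(r_n, r_n)$-friendly is by convention simply called $r_n$-friendly, which is exactly the assertion of the corollary.

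I do not expect a genuine obstacle here: the argument is a one-step invocation of the preceding proposition. The only points that deserve an explicit line are (i) confirming that an automorphism of $\calK_{r_n}^\xi$ legitimately plays the role of the covering $\varphi$ in Definition~\ref{def:s_phi}, so that the proposition is applicable, and (ii) checking that the two notations $S_\tau$ (from the motivating paragraph) and $S_\varphi$ (from the definition) agree under this identification. Both are matters of bookkeeping rather than substance, so the proof should be essentially a single sentence that cites Proposition~\ref{prop:s_phi} with $r_n' = r_n$.
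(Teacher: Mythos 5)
Your proposal is correct and matches the paper exactly: the paper states this corollary without a separate proof, immediately after Proposition~\ref{prop:s_phi}, which it explicitly introduces as the ``more general proposition'' proved precisely so that the corollary follows by specializing $\varphi = \tau$, $r_n' = r_n$, $\xi' = \xi$. Your two bookkeeping checks (that an automorphism qualifies as the covering in Definition~\ref{def:s_phi}, and that $S_\tau$ agrees with $S_\varphi$ under this identification) are exactly the right points to make explicit.
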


Finally, we get Theorem\nobreakspace \ref {thm:SymTypeCayExt} as a corollary of Lemma\nobreakspace \ref {lemma:r_n-friendly} and Corollary\nobreakspace \ref {coro:S_tau}:

\begin{theorem}\label{thm:SymTypeCayExt}
    Let $(\calK,r_n,\xi)$ be a Cayley extender.
    The symmetry type graph of $\calK_{r_n}^\xi$ is a quotient of $\calK_{r_n}$ by some $r_n$-friendly group $H\leq \frnd(\calK,r_n)$.
\end{theorem}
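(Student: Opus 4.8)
The plan is to assemble this as a direct corollary of the machinery just developed, chaining Corollary~\ref{coro:SymTypeCayExt}, Corollary~\ref{coro:S_tau}, and Lemma~\ref{lemma:r_n-friendly}. First I would recall that Corollary~\ref{coro:SymTypeCayExt} already tells us that the symmetry type graph of $\calK_{r_n}^\xi$ is the quotient of $\calK_{r_n}$ by the set
\[
    S = \bigcup_{\tau \in \Gamma(\calK_{r_n}^\xi)} S_\tau.
\]
So the entire content of the theorem is to identify this quotienting set with an $r_n$-friendly group $H$ contained in the maximal such group $\frnd(\calK, r_n)$.

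The key step is to show that $S$ is $r_n$-friendly. By Corollary~\ref{coro:S_tau}, each individual $S_\tau$ is $r_n$-friendly, so $S$ is a union of $r_n$-friendly sets. The final ``in particular'' clause of Lemma~\ref{lemma:r_n-friendly} states precisely that the group generated by an arbitrary union of $r_n$-friendly sets is again $r_n$-friendly. Thus I would take $H = \langle S \rangle$, the group generated by $S$, and conclude that $H$ is $r_n$-friendly, hence $H \leq \frnd(\calK, r_n)$ by the maximality in Corollary~\ref{coro:r_n-friendly-grp}.

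The one point requiring a little care is that Corollary~\ref{coro:SymTypeCayExt} gives the symmetry type graph as the quotient by $S$ itself, whereas the theorem asserts the quotient is by the \emph{group} $H = \langle S \rangle$. I would argue that these two quotients coincide: the partition of the flags of $\calK_{r_n}$ induced by $S$ (identifying $\Phi$ with $\Phi\tau$ for $\tau \in S$) already agrees with the partition into orbits of $\langle S \rangle$. This is because $S$ contains the identity (each $S_\tau$ does, taking $\tau = \mathrm{id}$ or via Lemma~\ref{lemma:r_n-friendly}\ref{item:rn-f_trivial}) and, more substantively, because $S$ arises as a union of the sets $S_\tau$ as $\tau$ ranges over the \emph{whole} automorphism group $\Gamma(\calK_{r_n}^\xi)$; the composition and inversion structure of that group forces $S$ to already be closed enough that its orbits on $\calK_{r_n}$ coincide with those of the group it generates. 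Consequently the quotient by $S$ equals the quotient by $H$, and the theorem follows.

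I expect the main obstacle to be exactly this last bookkeeping: verifying that passing from $S$ to $\langle S \rangle$ does not coarsen the quotient, i.e.\ that orbits under $S$ and under $\langle S \rangle$ are the same on $\calK_{r_n}$. Everything else is a formal invocation of already-established results. If the orbit-coincidence needed more than a one-line remark, the cleanest route would be to observe that $\Phi \sim \Phi\tau$ for all $\tau \in S$ is an equivalence relation whose classes are automatically $\langle S \rangle$-invariant, so the $S$-quotient and the $\langle S \rangle$-quotient of $\calK_{r_n}$ are identical as premaniplexes.
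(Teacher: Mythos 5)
Your proposal is correct and follows essentially the same route as the paper, which likewise obtains the theorem by chaining Corollary~\ref{coro:SymTypeCayExt}, Corollary~\ref{coro:S_tau}, and Lemma~\ref{lemma:r_n-friendly} with the maximality of $\frnd(\calK,r_n)$ from Corollary~\ref{coro:r_n-friendly-grp}. The bookkeeping you flag about passing from $S=\bigcup_{\tau}S_\tau$ to $H=\langle S\rangle$ is handled correctly by your equivalence-relation remark, but it is in fact moot: Proposition~\ref{prop:FriendlyStab} already shows that $S$ is itself a group (it is the isomorphic image of the stabilizer of the base facet under $\tau\mapsto\tau_1$), so $H=S=\langle S\rangle$ and the two quotients coincide trivially.
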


Note that despite the fact that
$H$ might  have elements not in $\Gamma(\calK_{r_n})$
the quotient  $\calK_{r_n} / H$ is well defined because of the definition of $r_n$-friendliness.

The previous theorem tells us that by knowing the subgroups of $\frnd(\calK,r_n)$ we know all the possible symmetry type graphs for Cayley extensions of $\cal K$ having $\calK_{r_n}$ as its quotient by the Cayley group. In particular, the most regular a Cayley extension of this type can be would be one of symmetry type $\calK_{r_n}/\frnd(\calK,r_n)$.
We will see now that universal extensions have this symmetry type.

\begin{theorem}\label{thm:UnivSymType}
    The symmetry type graph of $\calU(\calK, r_n)$ is isomorphic to $\calK_{r_n} / \frnd(\cal K,r_n)$.
\end{theorem}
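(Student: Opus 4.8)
The plan is to combine the upper bound already available from Theorem~\ref{thm:SymTypeCayExt} with a matching lower bound obtained by an explicit lifting construction. By Theorem~\ref{thm:SymTypeCayExt} together with Corollary~\ref{coro:SymTypeCayExt}, the symmetry type graph of $\calU(\calK,r_n)=\calK_{r_n}^\xi$ is $\calK_{r_n}/H$, where $H=\langle\bigcup_{\tau}S_\tau\rangle$ is an $r_n$-friendly group satisfying $H\leq\frnd(\calK,r_n)$. So it suffices to prove the reverse inclusion $\frnd(\calK,r_n)\leq H$, and for this it is enough to show that \emph{every} $\sigma\in\frnd(\calK,r_n)$ occurs as $\tau_1$ for some $\tau\in\Gamma(\calU(\calK,r_n))$, since then $\sigma\in S_\tau\subseteq H$.

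Fix $\sigma\in\frnd(\calK,r_n)$ and a base flag $\Phi_0$. I would define a candidate automorphism by $(\Phi,\gamma)\tau=(\Phi\tau_\gamma,\psi(\gamma))$, where $\tau_\gamma\in\frnd(\calK,r_n)$ and $\psi\colon G\to G$ (with $G=G(\calK,r_n)$) are specified by recursion on the length of a reduced word in the generators $\{\alpha_F\}$. The base case is $\tau_1=\sigma$ and $\psi(1)=1$. For the inductive step, set $\tau_{\alpha_F\gamma}=\os{(\tau_\gamma)}_F$, the unique $F$-friend of $\tau_\gamma$ — which lies in $\frnd(\calK,r_n)$ precisely because $\frnd(\calK,r_n)$ is $r_n$-friendly — and set $\psi(\alpha_F\gamma)=\alpha_{F\tau_\gamma}\,\psi(\gamma)$.

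The crux is well-definedness, and this is exactly where the universality of $G(\calK,r_n)$ enters: its only defining relations are $\alpha_F\alpha_{r_nF}=1$, and each element has a unique reduced word. I would verify that both recursions are unchanged when a trivial pair $\alpha_{r_nF}\alpha_F$ is inserted. For $\tau_\gamma$ this amounts to the identity $\os{\big(\os{\rho}_F\big)}_{r_nF}=\rho$, which follows from $r_n^2=\mathrm{Id}$ and the freeness of the action of $\Gamma(\calK)$ on $\calK$. For $\psi$ it amounts to $\alpha_{(r_nF)\tau_{\alpha_F\gamma}}\alpha_{F\tau_\gamma}=1$, i.e.\ to $(r_nF)\tau_{\alpha_F\gamma}=r_n(F\tau_\gamma)$, which is obtained by reading off the facets in the friend relation $r_n(\Phi\tau_\gamma)=(r_n\Phi)\os{(\tau_\gamma)}_F$. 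Unique reduced words then guarantee that $\tau_\gamma$ and $\psi(\gamma)$ depend only on $\gamma$. (For a non-universal voltage group the extra relations would in general spoil this consistency, which is the structural reason the universal extension is the most symmetric one.)

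Finally I would check that $\tau$ is an automorphism and conclude. Commutation with $r_i$ for $i<n$ is immediate since $\tau_\gamma\in\Gamma(\calK)$, while for $i=n$ both coordinates of $\big((\Phi,\gamma)^n\big)\tau$ and $\big((\Phi,\gamma)\tau\big)^n$ agree by the very choices of $\tau_{\alpha_F\gamma}$ and $\psi(\alpha_F\gamma)$. For bijectivity, I would run the same construction on $\sigma^{-1}\in\frnd(\calK,r_n)$ to obtain an endomorphism $\tau'$; the composite $\tau\tau'$ is a premaniplex homomorphism fixing $(\Phi_0,1)$, hence equals the identity because it commutes with the (transitive) monodromy action on the connected maniplex $\calU(\calK,r_n)$, and likewise $\tau'\tau=\mathrm{Id}$. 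Thus $\tau\in\Gamma(\calU(\calK,r_n))$ with $\tau_1=\sigma$, giving $\sigma\in H$. Hence $H=\frnd(\calK,r_n)$ and the symmetry type graph is $\calK_{r_n}/\frnd(\calK,r_n)$. The main obstacle is the well-definedness argument of the third paragraph; everything else is bookkeeping.
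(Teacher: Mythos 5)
Your proposal is correct and takes essentially the same route as the paper: the upper bound comes from Theorem\nobreakspace \ref{thm:SymTypeCayExt}, and the lower bound $\frnd(\calK,r_n)\subseteq\bigcup_\tau S_\tau$ is obtained by recursively building, over reduced words in $G(\calK,r_n)$, an automorphism of $\calU(\calK,r_n)$ acting as the prescribed friendly automorphism on the base facet, with well-definedness resting on the unique-reduced-word property and the identity $\os{\bigl(\os{\rho}_F\bigr)}_{r_nF}=\rho$. Indeed, your recursion $\tau_{\alpha_F\gamma}=\os{(\tau_\gamma)}_F$ agrees with the paper's $\os{(\sigma_\gamma\tau)}_F\,\os{\tau}_{F\sigma_\gamma}^{-1}$ by Lemma\nobreakspace \ref{lemma:r_n-friendly2}, and your inverse-from-$\sigma^{-1}$ argument for bijectivity is a clean replacement for the paper's (terser) claim that each partial map $\varphi_k$ is a permutation.
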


\begin{proof}
We know from Theorem\nobreakspace \ref {thm:SymTypeCayExt} that the symmetry type graph of $\calU(\calK, r_n)$ is $\cal K_{r_n}/\bigcup_{\tau\in \Gamma(\cal K_{r_n}^\xi)}S_\tau$.
    We also know that $\bigcup_{\tau\in \Gamma(\cal K_{r_n}^\xi)}S_\tau$ is $r_n$-friendly, and therefore contained in $\frnd(\cal K,r_n)$.
    Therefore, if we prove that $\frnd(\cal K,r_n) \subset \bigcup_{\tau\in \Gamma(\cal K_{r_n}^\xi)}S_\tau$, we get our result.

    To prove this, we will take an element $\tau \in \frnd(\cal K,r_n)$ and construct an automorphism $\varphi$ of $\cal U(\cal K, r_n)$ that acts as $\tau$ in some facet (the base facet, for simplicity). This will prove that $\tau \in S_\varphi$ and therefore our result.

    The basic idea is that we want $(\Psi, 1) \varphi = (\Psi \tau, 1)$ for every flag $\Psi$ of $\calK$. Now, we need the action of $\varphi$ to commute with the action of $r_n$. We see that
    \[ r_n ((\Psi, 1) \varphi) = r_n (\Psi \tau, 1) = (r_n (\Psi \tau), \xi(\Psi \tau)) \]
    whereas
    \[ (r_n (\Psi, 1)) \varphi = (r_n \Psi, \xi(\Psi)) \varphi. \]
    Since $\tau$ is $r_n$-friendly, that implies that $r_n(\Psi \tau) = (r_n \Psi) \os{\tau}$ for some $r_n$-friendly automorphism $\os{\tau}$. So
    we find that 
    \[ (r_n \Psi, \xi(\Psi)) \varphi = ((r_n \Psi) \os{\tau}, \xi(\Psi \tau)). \]
    Furthermore, using the fact that $\varphi$ commutes with each $r_i$ with $i < n$, we find that the action of $\varphi$ does not depend on the first coordinate; i.e., for every flag $\Lambda$ we have
    \[ (\Lambda, \xi(\Psi)) \varphi = (\Lambda \os{\tau}, \xi(\Psi \tau)). \]
    Thus, this shows that once we decide how $\varphi$ acts on flags of the form $(\Psi, 1)$, we can deduce how it must act on flags of the form $(\Psi, \gamma)$, where $\gamma$ has `length' 1 (that is, it consists of the voltage of a single flag). Using the same sort of process, we can deduce how $\varphi$ acts when $\gamma$ has length 2 and so on. Let us describe the process formally.

    Let $\Phi$ be a flag of $\cal K$ and $\Phi'=\Phi\tau$. We will construct an automorphism of $\cal U(\cal K, r_n)$ that maps $(\Phi,1)$ to $(\Phi',1)$.
    To do this, we will construct a sequence of functions $\{\varphi_k\}_{k\in \bN}$ such that:
    \begin{enumerate}
        \item\label{item:basefacet} $(\Psi,1)\varphi_k = (\Psi\tau,1)$ for every flag $\Psi$ of $\cal K$. In particular, $(\Phi,1) \varphi_k = (\Phi',1)$,
        \item\label{item:domain} $\varphi_k$ is a permutation of the set
        \[\left\{ (\Psi,\gamma) \in \cal U (\cal K, r_n) : \Psi \in \cK, \gamma \in G(\cK, r_{n}) \text{ and $\gamma$ has length $k$ or less}\right\}, \]
        where the \emph{length} of $\gamma$ is the smallest $k$ such that $\gamma$ can be written as a product of $k$ elements in $\left\{ \alpha_{F} : F \in (\cK)_{n-1} \right\} $ (see Definition\nobreakspace \ref {def:univ-ext}).
\item\label{item:sigma} For every $\gamma\in G(\cal K, r_n)$ with length $k$ or less, there is an $r_n$-friendly automorphism $\sigma_\gamma$ of $\cal K$ and an element $\gamma'\in G(\cal K,r_n)$ satisfying that $(\Psi,\gamma)\varphi_k=(\Psi\sigma_\gamma,\gamma')$
        (note that this property implies that $\varphi_k$ preserves $i$-adjacencies for $i<n$).
\item\label{item:n-adjacent} For any two $n$-adjacent flags in the domain of $\varphi_k$, their images are also $n$-adjacent.
        \item\label{item:extension} The function $\varphi_{k+1}$ is an extension of $\varphi_k$; that is, $\varphi_{k+1}$ coincides with $\varphi_k$ in the domain of $\varphi_k$. \end{enumerate}
    Once we succeed in creating this sequence, we can define $\varphi$ as its limit and it will (clearly) be an automorphism of $\cal U(\cal K,r_n)$ that acts like $\tau$ on the base facet.

First we define $\varphi_0$ by $(\Psi,1)\varphi_0=(\Psi\tau,1)$ for all flags $\Psi$ of $\calK$.

    Now suppose we already have constructed $\varphi_k$ with the desired properties. In particular, for each $\gamma\in G(\cal K,r_n)$ with length $k$ and for every flag $\Psi$ we have $(\Psi, \gamma) \varphi_k = (\Psi \sigma_{\gamma}, \gamma')$. For every $\alpha_F$ that does not cancel the first character of $\gamma$, let us define $(\Psi,\alpha_F\gamma)\varphi_{k+1}$ as
    \[
        (\Psi,\alpha_F\gamma)\varphi_{k+1} = (\Psi\os{(\sigma_\gamma\tau)}_F\os{\tau}_{F\sigma_\gamma}^{-1}, \alpha_{F\sigma_\gamma}\gamma').
    \](Recall that, if $\alpha$ is $r_n$-friendly and $\Phi$ has facet $F$, then $\os{\alpha}_F$ is the automorphism such that $r_n(\Phi \alpha) = (r_n \Phi) \os{\alpha}_F$.)  
    For the other flags in the domain of $\varphi_{k+1}$ we just make it coincide with $\varphi_k$.

    We will now prove that $\varphi_{k+1}$ satisfies the desired conditions.
    We automatically get
Items\nobreakspace \ref {item:basefacet},  \ref {item:domain} and\nobreakspace  \ref {item:extension}.
    Also, by defining $\sigma_{\alpha_F\gamma}:=\os{(\sigma_\gamma\tau)}_F\os{\tau}_{F\sigma_\gamma}^{-1}$ and $(\alpha_F\gamma)':=\alpha_{F\sigma_\gamma}\gamma'$ we get Item\nobreakspace \ref {item:sigma}.

    Let us verify that Item\nobreakspace \ref {item:n-adjacent} holds. Suppose that $(\Psi,\gamma)$ is in the domain of $\varphi_k$ and let $F$ be the facet of $\cal K$ containing $\Psi$.
    If $(\Psi,\gamma)^n$ is also in the domain of $\varphi_k$, then there is nothing to prove, since $\varphi_k$ satisfies Item\nobreakspace \ref {item:n-adjacent} and $\varphi_{k+1}$ is an extension of it.
    If $(\Psi,\gamma)^n$ is not in the domain of $\varphi_k$ we have that $(\Psi,\gamma)^n=(r_n\Psi,\alpha_F\gamma)$, and we notice that $\alpha_F\gamma$ must have length $k+1$.
    Therefore, our definition tells us that
    \[
    \begin{aligned}
        (\Psi,\gamma)^n\varphi_{k+1} &= (r_n\Psi,\alpha_F\gamma)\varphi_{k+1}\\
        &=((r_n\Psi)\os{(\sigma_\gamma\tau)}_F\os{\tau}_{F\sigma_\gamma}^{-1}, \alpha_{F\sigma_\gamma}\gamma')\\
        &=((r_n(\Psi\sigma_\gamma\tau)\os{\tau}_{F\sigma_\gamma}^{-1}, \alpha_{F\sigma_\gamma}\gamma')\\
        &=(((r_n(\Psi\sigma_\gamma))\os{\tau}_{F\sigma_\gamma}\os{\tau}_{F\sigma_\gamma}^{-1}, \alpha_{F\sigma_\gamma}\gamma')\\
        &=(r_n(\Psi\sigma_\gamma), \alpha_{F\sigma_\gamma}\gamma')\\
        &=(\Psi\sigma_\gamma,\gamma')^n\\
        &=((\Psi,\gamma)\varphi_{k+1})^n.
    \end{aligned}
    \].

    Now let us suppose that $(\Psi,\gamma)$ is in the domain of $\varphi_{k+1}$ but not in the domain of $\varphi_k$. If $(\Psi,\gamma)^n$ is also in the domain of $\varphi_{k+1}$, then, since $(\Psi,\gamma)^n=(r_n\Psi,\alpha_F\gamma)$ we must have that $\gamma$ has length $k+1$ and $\alpha_F\gamma$ has length $k$.
    We can then do a change of variable $\os{\Psi}=r_n\Psi$, $\os{\gamma}=\alpha_F\gamma$ and we are back to the previous case.
    This concludes the proof that $\varphi_{k+1}$ satisfies the desired properties.

    Then we simply define $\varphi$ as the limit of the sequence $\{\varphi_k\}_{k\in\bN}$. It is clear that $\varphi$ is an automorphism of $\cal U(\calK, r_n)$ and that it acts as $\tau$ on the base facet, so 
    $\tau \in S_{\varphi}$ and thus $\frnd(\cal K, r_n) \subseteq \bigcup_{\tau\in \Gamma(\cal K_{r_n}^\xi)}S_\tau$, which proves the theorem.

\end{proof}

\begin{corollary}
    The symmetry type graph of the universal canonical Cayley extension $\calU(\calK)$ is the symmetry type graph of $\calK$ but with semi-edges of color $n$ at every node.
\end{corollary}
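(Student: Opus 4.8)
The plan is to derive this statement as the immediate specialization of Theorem~\ref{thm:UnivSymType} to the canonical case $r_n = Id$. First I would recall that, by definition, the universal canonical Cayley extension $\calU(\calK)$ is exactly $\calU(\calK, Id)$, so Theorem~\ref{thm:UnivSymType} already tells us that its symmetry type graph is $\calK_{Id}/\frnd(\calK, Id)$. The entire task therefore reduces to identifying both the friendly group $\frnd(\calK,Id)$ and the pre-extension $\calK_{Id}$ explicitly.

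Next I would invoke Corollary~\ref{coro:r_n-friendly-grp}, whose last sentence states precisely that when $r_n = Id$ one has $\frnd(\calK, Id) = \Gamma(\calK)$. Hence the symmetry type graph in question is $\calK_{Id}/\Gamma(\calK)$. For the pre-extension itself, I would return to the construction of $\calK_{r_n}$: its flags are the flags of $\calK$, its $i$-adjacencies for $i<n$ agree with those of $\calK$, and the $n$-adjacent flag of $\Phi$ is $r_n\Phi$. Setting $r_n = Id$ gives $\Phi^n = \Phi$, so the color-$n$ edge at each flag is a semi-edge; thus $\calK_{Id}$ is simply $\calK$ with one semi-edge of color $n$ attached at every flag.

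It then remains to pass to the quotient. Forgetting the color-$n$ edges, the quotient of $\calK$ by $\Gamma(\calK)$ is, by definition, the symmetry type graph of $\calK$, so the only point to verify is that the added semi-edges descend correctly. This is the one piece of genuine (though light) bookkeeping, and I expect it to be the sole obstacle, if any: one must check that the projection sends each $n$-semi-edge at a flag $\Phi$ to an $n$-semi-edge at the orbit of $\Phi$, and that no two distinct orbits get joined by a color-$n$ edge. Both follow at once from $r_n = Id$ fixing every flag together with the fact that $\Gamma(\calK)$ preserves colors, so each node of the symmetry type graph of $\calK$ inherits exactly one semi-edge of color $n$. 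Assembling these observations yields that the symmetry type graph of $\calU(\calK)$ is the symmetry type graph of $\calK$ with a semi-edge of color $n$ at every node, as claimed.
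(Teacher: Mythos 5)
Your proposal is correct and follows exactly the route the paper intends: the corollary is stated without proof immediately after Theorem~\ref{thm:UnivSymType}, and the implicit derivation is precisely your specialization to $r_n = Id$, using Corollary~\ref{coro:r_n-friendly-grp} to identify $\frnd(\calK, Id) = \Gamma(\calK)$ and observing that $\calK_{Id}$ is $\calK$ with a semi-edge of color $n$ at each flag, so the quotient is the symmetry type graph of $\calK$ with a semi-edge of color $n$ at each node. Your extra bookkeeping about the semi-edges descending to the quotient is harmless and indeed trivial since $\Gamma(\calK_{Id}) = \Gamma(\calK)$ acts by genuine automorphisms of $\calK_{Id}$, so the quotient is a standard one.
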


As we remarked after Proposition\nobreakspace \ref {prop:r_n-friendly-as-intersection}, the limit of the partitions $\Part_k$ (and $\Part_k'$) is some partition $\Part$ corresponding to the orbits of flags of $\calK$ under the action of $\frnd(\calK, r_n)$. 
Thus it follows that the symmetry type graph of $\cal U(\cal K,r_n)$ is $\calK_{r_n}/\Part$.

\subsection{Isomorphic universal extensions}

It may happen that two universal extensions with respect to a different $r_n$ are actually the same. In what follows we will characterize when this happens.
In particular, we will show that if $\cal K$ is regular, then all the universal extensions are equal (and in fact they are equal to the free extensions described in \cite[Thm. 4D4]{McMullenSchulte_2002_AbstractRegularPolytopes}).

\begin{theorem}\label{thm:EqualUniversalExt}
   Let $\cal U(\cal K,r_n)$ and $\cal U(\cal K, r_n')$ be universal extensions of a polytope $\cal K$.
   If $\calK_{r_n} / \frnd(\cal K,r_n) = \calK_{r_n'} / \frnd(\cal K,r_n')$ then $\cal U(\cal K,r_n) \cong \cal U(\cal K, r_n')$ .
   Moreover, there is an isomorphism $\varphi:\cal U(\cal K,r_n)\to \cal U(\cal K,r_n')$ that acts as the identity on the facet $(\calK, 1)$.
\end{theorem}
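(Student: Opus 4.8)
The plan is to reduce the hypothesis on symmetry type graphs to a statement purely about friendly sets, and then to build the desired isomorphism by the same length-induction used in the proof of Theorem~\ref{thm:UnivSymType}.

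First I would extract the algebraic content of the equality of the two symmetry type graphs. The vertex set of $\calK_{r_n}/\frnd(\calK,r_n)$ is exactly the set of orbits of flags of $\calK$ under $\frnd(\calK,r_n)$, and likewise for $r_n'$. Since $\Gamma(\calK)$ acts freely on $\calK$, two subgroups of $\Gamma(\calK)$ with the same orbit partition must coincide; hence the hypothesis forces $H:=\frnd(\calK,r_n)=\frnd(\calK,r_n')$. Comparing the edges of color $n$ in the two (equal) quotients then shows that for every flag $\Phi$ the flags $r_n\Phi$ and $r_n'\Phi$ lie in the same $H$-orbit, say $r_n'\Phi=(r_n\Phi)h_\Phi$ with $h_\Phi\in H$. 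Combining this with the fact that $H$ is $r_n$-friendly (Corollary~\ref{coro:r_n-friendly-grp}) yields, for each $\tau\in H$ and each flag $\Phi$, an element $\os\tau\in H$ with $r_n'(\Phi\tau)=(r_n\Phi)\os\tau$; that is, $H$ is $(r_n,r_n')$-friendly. By the symmetry of the hypothesis, $H$ is also $(r_n',r_n)$-friendly.

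Next I would construct a homomorphism $\varphi:\calU(\calK,r_n)\to\calU(\calK,r_n')$ acting as the identity on $(\calK,1)$, following the inductive scheme in the proof of Theorem~\ref{thm:UnivSymType}, but with $\tau=\mathrm{Id}$ and with the $(r_n,r_n')$-friends now translating $n$-adjacencies of the source into $n$-adjacencies of the target. Writing $\{\alpha_F\}$ and $\{\beta_F\}$ for the generators of $G(\calK,r_n)$ and $G(\calK,r_n')$, I would define $\varphi$ by induction on the reduced length of the second coordinate: set $(\Psi,1)\varphi=(\Psi,1)$, maintain the invariant that $(\Psi,\gamma)\varphi=(\Psi\sigma_\gamma,\gamma')$ for some $\sigma_\gamma\in H$ and $\gamma'\in G(\calK,r_n')$, and for a reduced word $\alpha_F\gamma$ put
\[ (\Psi,\alpha_F\gamma)\varphi=\bigl(\Psi\,\os{(\sigma_\gamma)}_F,\ \beta_{F\sigma_\gamma}\gamma'\bigr), \]
where $\os{(\sigma_\gamma)}_F\in H$ is the $(r_n,r_n')$-friend of $\sigma_\gamma$ at the facet $F$. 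This choice of first coordinate is exactly what makes $\varphi$ commute with the $n$-darts, since friendliness gives $r_n'(\Lambda\sigma_\gamma)=(r_n\Lambda)\os{(\sigma_\gamma)}_F$ for $\Lambda$ in $F$; commuting with $r_i$ for $i<n$ is automatic because $\varphi$ acts on first coordinates through the automorphism $\sigma_\gamma$ of $\calK$. The consistency of the recursion, including flags reachable by two reduced words, is handled by the same change-of-variable argument used at the end of the proof of Theorem~\ref{thm:UnivSymType}.

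Finally I would construct $\psi:\calU(\calK,r_n')\to\calU(\calK,r_n)$ in the identical manner, using that $H$ is $(r_n',r_n)$-friendly, again fixing $(\calK,1)$. Both composites $\psi\varphi$ and $\varphi\psi$ are then premaniplex endomorphisms of connected premaniplexes fixing the base flag, so by the principle that a homomorphism of a connected premaniplex is determined by the image of a single flag, each composite is the identity. Hence $\varphi$ is an isomorphism restricting to the identity on $(\calK,1)$, which proves the theorem. The main obstacle is the bookkeeping in the inductive step: verifying that the recursion is well defined on reduced words of each length and genuinely commutes with the color-$n$ darts. This is precisely where the full $(r_n,r_n')$-friendliness of $H$, rather than merely the relation $r_n'\Phi=(r_n\Phi)h_\Phi$, is indispensable, exactly as the remark after Corollary~\ref{coro:r_n-friendly-grp} warns.
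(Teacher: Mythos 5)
Your proposal is correct and follows essentially the same path as the paper's proof: extract from the equality of symmetry type graphs that $\frnd(\calK,r_n)=\frnd(\calK,r_n')$ (via freeness of the $\Gamma(\calK)$-action) together with the flag-wise relation $r_n'\Phi=(r_n\Phi)h_\Phi$, and then build $\varphi$ by the same length-induction as in Theorem\nobreakspace \ref {thm:UnivSymType}; your packaging of the data as $(r_n,r_n')$-friendliness of $H$ is exactly what the paper encodes through the composite $\os{(\sigma_\gamma)}_F\tau_{F\sigma_\gamma}$ in its recursive formula. Your explicit construction of the inverse $\psi$ and the base-flag rigidity argument is a welcome tightening of the final step, which the paper dispatches with ``it will clearly be an isomorphism.''
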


\begin{proof}
    We will denote by $\alpha_F$ the generator of $G(\cal K,r_n)$ corresponding to the facet $F$ and by $\alpha_F'$ the generator of $G(\cal K,r_n')$ corresponding to $F$.

    Note that when we write $\calK_{r_n} / \frnd(\cal K,r_n) = \calK_{r_n'} / \frnd(\cal K,r_n')$ in our hypothesis, we are using the symbol for equality and not for isomorphism. This means that for every flag $\Phi$, the flag orbits $(r_n\Phi)\frnd(\cal K,r_n)$ and $(r_n'\Phi)\frnd(\cal K,r_n')$ are the same.
    Since the action of $\Gamma(\cal K)$ on the flags of $\cal K$ is free, this also implies that $\frnd(\cal K,r_n)=\frnd(\cal K,r_n')$.
    In particular, for every flag $\Phi$ in a facet $F$ of $\cal K$, there is an $r_n$-friendly automorphism $\tau_F$ depending only on $F$, satisfying that $r_n'\Phi = (r_n\Phi)\tau_F$.

    The rest of this proof will follow the same techniques as the first part of the proof for Theorem\nobreakspace \ref {thm:UnivSymType}:
We will construct a sequence of functions $\{\varphi_k\}_{k\in \bN}$ such that:
    \begin{enumerate}
        \item\label{item:domain_} The domain of $\varphi_k$ is the set of flags in $\cal U (\cal K, r_n)$ of type $(\Psi,\gamma)$ with $\Psi$ a flag of $\cal K$ and $\gamma$ an element of $G(\cal K, r_n)$ of length at most $k$.
        \item\label{item:sigma_} For every $\gamma\in G(\cal K, r_n)$ of length at most $k$, there is an automorphism $\sigma_\gamma$ in the $r_n$-friendly group of $\cal K$ and an element $\gamma'\in G(\cal K,r_n')$ satisfying that $(\Psi,\gamma) \varphi_k=(\Psi\sigma_\gamma,\gamma')$
        (note that this property implies that $\varphi_k$ preserves $i$-adjacencies for $i<n$).
        \item\label{item:n-adjacent_} If there are two $n$-adjacent flags in the domain of $\varphi_k$, their images are also $n$-adjacent.
        \item\label{item:extension_} The function $\varphi_{k+1}$ is an extension of $\varphi_k$.
    \end{enumerate}
    Once we succeed in creating this sequence, we can define $\varphi$ as its limit and it will clearly be an isomorphism between $\cal U(\cal K,r_n)$ and $\cal U(\cal K,r_n')$.

   First we define $\varphi_0$ as the identity (that is, $(\Psi,1)\varphi_0=(\Psi,1)$ for every flag $\Psi$ in $\cal K$). (Note that in principle, we could have $\varphi_0$ act on $(\Psi, 1)$ as $(\Psi \beta, 1)$ where $\beta$ is any automorphism of $\calK$, but choosing the identity simplifies matters.) Now suppose we already have constructed $\varphi_k$ with the desired properties. Now for every $\gamma\in G(\cal K,r_n)$ with length $k$ and every $\alpha_F$ that does not cancel the first character of $\gamma$, let us define:
   \[
        (\Psi,\alpha_F\gamma)\varphi_{k+1} = (\Psi\os{(\sigma_\gamma)}_F \tau_{F\sigma_\gamma}, \alpha_{F\sigma_\gamma}'\gamma').
   \]
   On the remainder of the domain of $\varphi_{k+1}$ we just make it coincide with $\varphi_k$.

   We will now prove that $\varphi_{k+1}$ satisfies the desired conditions.
   We automatically get Items\nobreakspace \ref {item:domain_} and\nobreakspace  \ref {item:extension_}. Also, by defining $\sigma_{\alpha_F\gamma}:=\os{(\sigma_\gamma)}_F \tau_{F\sigma_\gamma}$ and $(\alpha_F\gamma)':=\alpha_{F\sigma_\gamma}'\gamma'$ we get Item\nobreakspace \ref {item:sigma_}.

    Let us verify  that $\varphi_{k+1}$ satisfies Item\nobreakspace \ref {item:n-adjacent_}.
    Just as in Theorem\nobreakspace \ref {thm:UnivSymType}, we only need to prove that $((\Psi,\gamma)^n)\varphi_{k+1}=((\Psi,\gamma)\varphi_{k+1})^n$ in the case where $(\Psi,\gamma)$ is in the domain of $\varphi_k$ but $(\Psi,\gamma)^n$ is not. So we simply calculate:
    \[
    \begin{aligned}
        ((\Psi,\gamma)^n)\varphi_{k+1} &= (r_n\Psi,\alpha_F\gamma)\varphi_{k+1} \\
        &=((r_n\Psi)\os{(\sigma_\gamma)}_F \tau_{F\sigma_\gamma}, \alpha_{F\sigma_\gamma}'\gamma')\\
        &=((r_n(\Psi \sigma_\gamma)) \tau_{F\sigma_\gamma}, \alpha_{F\sigma_\gamma}'\gamma')\\
        &=(r_n'(\Psi \sigma_\gamma), \alpha_{F\sigma_\gamma}'\gamma')\\
        &= (\Psi\sigma_\gamma,\gamma')^n\\
        &= ((\Psi,\gamma)\varphi_{k+1})^n.
    \end{aligned}
   \]

    Then we simply define $\varphi$ as the limit of the sequence $\{\varphi_k\}_{k\in\bN}$ and it will clearly be an isomorphism.
\end{proof}

The converse of the previous theorem is also true. In fact:

\begin{theorem}\label{thm:EqualUnivExt2}
    Let $(\cal K,r_n)$ and $(\cal K,r_n')$ be pre-extenders.
    Then the following are equivalent:
    \begin{enumerate}
        \item\label{item:UnivIsom} There is an isomorphism $\varphi:\calU(\cal K,r_n)\to\calU(\cal K,r_n')$ that acts as the identity on the base facet $(\calK, 1)$.
        \item\label{item:friendly} There is an $(r_n,r_n')$-friendly set $S$ that includes the identity on $\cal K$.
        \item \label{item:SameSTG} $\calK_{r_n} / \frnd(\cal K,r_n) = \calK_{r_n'} / \frnd(\cal K,r_n')$.
    \end{enumerate}
\end{theorem}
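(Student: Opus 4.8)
The plan is to prove the three statements equivalent by the cycle $(i)\Rightarrow(ii)\Rightarrow(iii)\Rightarrow(i)$, where the last implication is free and the middle one carries all the weight. For $(iii)\Rightarrow(i)$ I would simply invoke Theorem \ref{thm:EqualUniversalExt}: its \emph{moreover} clause produces exactly an isomorphism $\calU(\calK,r_n)\to\calU(\calK,r_n')$ fixing the base facet, and the construction there does not actually use polytopality of $\calK$. For $(i)\Rightarrow(ii)$, observe that an isomorphism $\varphi\colon\calU(\calK,r_n)\to\calU(\calK,r_n')$ is in particular a covering, so Proposition \ref{prop:s_phi} gives that $S_\varphi$ is $(r_n,r_n')$-friendly; and since $\varphi$ fixes $(\calK,1)$, we get $\Phi\varphi_1=((\Phi,1)\varphi)p=\Phi$, i.e.\ $\varphi_1=\mathrm{id}\in S_\varphi$ (notation as in Definition \ref{def:s_phi}). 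Hence $S=S_\varphi$ witnesses $(ii)$.

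The real content is $(ii)\Rightarrow(iii)$. Write $H=\frnd(\calK,r_n)$ and $H'=\frnd(\calK,r_n')$, and let $S$ be an $(r_n,r_n')$-friendly set with $\mathrm{id}\in S$. The key is to run the products and inverses of Lemma \ref{lemma:r_n-friendly2} and then read off containments using that $\frnd$ is the \emph{greatest} set of its type (Corollary \ref{coro:r_n-friendly-grp}). First, $S^{-1}$ is $(r_n',r_n)$-friendly, so $SS^{-1}$ is $(r_n,r_n)$-friendly and therefore $SS^{-1}\subseteq H$; since $\mathrm{id}\in S$, any $\tau\in S$ satisfies $\tau=\tau\cdot\mathrm{id}^{-1}\in SS^{-1}$, whence $S\subseteq H$. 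Next, $SH'S^{-1}$ is $(r_n,r_n)$-friendly (compose $(r_n,r_n')$, $(r_n',r_n')$, $(r_n',r_n)$), so $SH'S^{-1}\subseteq H$; and because $\mathrm{id}\in S\cap S^{-1}$ we have $H'=\mathrm{id}\,H'\,\mathrm{id}\subseteq SH'S^{-1}$, giving $H'\subseteq H$. The symmetric computation with $S^{-1}HS$ (which is $(r_n',r_n')$-friendly) yields $H\subseteq H'$, so $H=H'$.

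With $H=H'$ established, it remains to match the $n$-edges of the two quotients. Applying the friendliness of $S$ to the element $\tau=\mathrm{id}\in S$: for each flag $\Phi$, lying in a facet $F$, there is a friend $\os{\mathrm{id}}_F\in S\subseteq H$ with $r_n'\Phi=(r_n\Phi)\os{\mathrm{id}}_F$. As $\os{\mathrm{id}}_F\in H$, the flags $r_n\Phi$ and $r_n'\Phi$ lie in the same $H$-orbit. Therefore $\calK_{r_n}$ and $\calK_{r_n'}$ have the same vertex set after quotienting (the $H=H'$-orbits), the same $i$-edges for $i<n$ (inherited from $\calK$), and the same $n$-edges $[\Phi]\mapsto[r_n\Phi]=[r_n'\Phi]$; both quotients are well defined precisely because $H$ is $r_n$- (respectively $r_n'$-) friendly, as noted after Theorem \ref{thm:SymTypeCayExt}. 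Thus $\calK_{r_n}/\frnd(\calK,r_n)=\calK_{r_n'}/\frnd(\calK,r_n')$, which is $(iii)$, closing the cycle.

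I expect the main obstacle to be the careful bookkeeping of the \emph{mixed} friendliness types through Lemma \ref{lemma:r_n-friendly2}, and in particular ensuring that the friend $\os{\mathrm{id}}_F$ of the identity genuinely lands inside $S$ and hence inside $H$, so that the relation $r_n'\Phi=(r_n\Phi)\os{\mathrm{id}}_F$ holds \emph{within the friendly group} and not merely in $\Gamma(\calK)$. This is exactly the subtlety flagged in the Remark after Corollary \ref{coro:r_n-friendly-grp}: a single-element condition of the form ``$r_n(\Phi\tau)=(r_n\Phi)\os\tau$'' is not enough on its own, which is why the argument is routed through the whole friendly set $S$ and the maximality of $\frnd$ rather than through individual automorphisms.
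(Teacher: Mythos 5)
Your proposal is correct and takes essentially the same route as the paper's proof: the same cycle of implications, with (i)$\Rightarrow$(ii) via Proposition\nobreakspace \ref {prop:s_phi} and the observation that fixing the base facet gives $1\in S_\varphi$, (ii)$\Rightarrow$(iii) via the composition rules of Lemma\nobreakspace \ref {lemma:r_n-friendly2} together with the maximality of $\frnd$ (Corollary\nobreakspace \ref {coro:r_n-friendly-grp}), and (iii)$\Rightarrow$(i) by citing Theorem\nobreakspace \ref {thm:EqualUniversalExt}. The only cosmetic differences are that you extract $S\subseteq\frnd(\calK,r_n)$ from $SS^{-1}$ whereas the paper reads both $S$ and $\frnd(\calK,r_n')$ off the single set $S\,\frnd(\calK,r_n')\,S^{-1}$, and that you explicitly note Theorem\nobreakspace \ref {thm:EqualUniversalExt} does not need polytopality, a point the paper leaves implicit.
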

\begin{proof}

\ref{item:UnivIsom}$\Rightarrow$\ref{item:friendly}: Just let $S=S_\varphi$ as in Proposition\nobreakspace \ref {prop:s_phi}.
    The fact that $\varphi$ acts as the identity on the base facet implies that $1 \in S_\varphi$.

    \ref{item:friendly}$\Rightarrow$ \ref{item:SameSTG}:
    Note that because of Lemma\nobreakspace \ref {lemma:r_n-friendly2}, the set $S\frnd(\cal K,r_n')S^{-1}$ is $r_n$-friendly, and therefore contained in $\frnd(\calK, r_n)$.
    On the other hand, since $1$ is in $S$, $S\frnd(\cal K,r_n')S^{-1}$ contains $\frnd(\cal K,r_n')$ and $S$.
    This proves that $S$ and $\frnd(\cal K, r_n')$ are contained in $\frnd(\cal K, r_n)$.

    Analogously, we may prove that $\frnd(\cal K,r_n)$ and $S$ are contained in $\frnd(\cal K,r_n')$.
    Therefore $\frnd(\cal K,r_n)=\frnd(\cal K,r_n')$.

    Now let $\Phi\in\cal K$ and let $F$ be the facet of $\cal K$ including $\Phi$.
    Note that, since $1 \in S$ which is $(r_n,r_n')$-friendly, there must be some automororphism $\os{1}_F \in S$ such that $r_n \Phi = (r_n'\Phi)\os{1}_F$, but since $S \subset \frnd(\cal K, r_n)$, we get that $r_n\Phi$ and $r_n'\Phi$ are in the same orbit under $\frnd(\cal K, r_n)$.
    This proves that $\calK_{r_n} / \frnd(\cal K,r_n) = \calK_{r_n'} / \frnd(\cal K,r_n')$.

    \ref{item:SameSTG}$\Rightarrow$\ref{item:UnivIsom}: This is Theorem\nobreakspace \ref {thm:EqualUniversalExt}.
\end{proof}

To get an even stronger version of Theorem\nobreakspace \ref {thm:EqualUnivExt2} we introduce a new concept.
Given a Cayley extender $(\cal K, r_n,\xi)$ and an automorphism $\tau$ of $\cal K$ we define
\[
    \begin{aligned}
        r_n^\tau (\Phi) &= (r_n(\Phi\tau^{-1}))\tau,\\
        \xi^\tau (F) &= \xi (F\tau^{-1}).
    \end{aligned}
\]
Of course, we may also define $r_n^\tau$ for pre-extenders.

It should be obvious that $\tau$ acts as an isomorphism from $\cal K_{r_n}$ to $\cal K_{r_n^\tau}$, and that this automorphism lifts to an isomorphism $\tau^*$ from $\cal K_{r_n}^\xi$ to $\cal K_{r_n^\tau}^{\xi^\tau}$.
In particular, $\cal U(\cal K, r_n)\cong \cal U(\cal K,r_n^\tau)$.

\begin{lemma}\label{lemma:FriendlyConjugation}
    If $S$ is $(r_n,r_n')$-friendly and $\tau \in \Gamma(\cal K)$, then
    \begin{enumerate}
        \item $\tau^{-1}S$ is $(r_n^\tau,r_n')$-friendly.
        \item $S\tau$ is $(r_n,(r_n')^\tau)$-friendly.
        \item $S^\tau$ is $(r_n^\tau,(r_n')^\tau)$-friendly.
    \end{enumerate}
\end{lemma}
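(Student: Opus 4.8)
The plan is to verify each of the three claims directly from the definition of $(r_n,r_n')$-friendliness, taking care throughout that automorphisms of $\cal K$ act on the right. I would first record the reduction that the third claim follows formally from the first two: with the conjugation convention used earlier in the paper, $S^\tau = \tau^{-1}S\tau$, so applying the first claim to $S$ shows $\tau^{-1}S$ is $(r_n^\tau, r_n')$-friendly, and then applying the second claim to $\tau^{-1}S$ (with $r_n$ replaced by $r_n^\tau$) shows $(\tau^{-1}S)\tau = S^\tau$ is $(r_n^\tau,(r_n')^\tau)$-friendly. Thus the substance lies in the first two claims, each of which just relocates the appeal to friendliness of $S$ to a suitably shifted flag.

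For the first claim, I would fix a flag $\Phi$ and an element $\tau^{-1}\sigma\in\tau^{-1}S$ with $\sigma\in S$. Applying the friendliness of $S$ at the flag $\Phi\tau^{-1}$ produces $\os{\sigma}\in S$ with $r_n'((\Phi\tau^{-1})\sigma)=(r_n(\Phi\tau^{-1}))\os{\sigma}$, whose left-hand side is exactly $r_n'(\Phi\tau^{-1}\sigma)$. It then remains to check that $\tau^{-1}\os{\sigma}\in\tau^{-1}S$ is the required friend, which is the computation
\[
(r_n^\tau\Phi)(\tau^{-1}\os{\sigma}) = \big((r_n(\Phi\tau^{-1}))\tau\big)(\tau^{-1}\os{\sigma}) = (r_n(\Phi\tau^{-1}))\os{\sigma} = r_n'(\Phi\tau^{-1}\sigma),
\]
using the definition $r_n^\tau\Phi=(r_n(\Phi\tau^{-1}))\tau$ and cancelling $\tau\tau^{-1}$ under the right action. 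This shows $\tau^{-1}S$ is $(r_n^\tau,r_n')$-friendly.

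For the second claim, I would fix a flag $\Phi$ and an element $\sigma\tau\in S\tau$ with $\sigma\in S$, and unwind the definition of $(r_n')^\tau$:
\[
(r_n')^\tau(\Phi\sigma\tau) = \big(r_n'((\Phi\sigma\tau)\tau^{-1})\big)\tau = (r_n'(\Phi\sigma))\tau.
\]
Friendliness of $S$ at $\Phi$ supplies $\os{\sigma}\in S$ with $r_n'(\Phi\sigma)=(r_n\Phi)\os{\sigma}$, so the right-hand side becomes $((r_n\Phi)\os{\sigma})\tau=(r_n\Phi)(\os{\sigma}\tau)$, exhibiting $\os{\sigma}\tau\in S\tau$ as a friend of $\sigma\tau$; hence $S\tau$ is $(r_n,(r_n')^\tau)$-friendly. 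The third claim then follows by the reduction noted above.

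I do not expect a genuine conceptual obstacle here; the only thing that needs real attention is the bookkeeping of the right-action convention together with the precise forms of the conjugated matchings $r_n^\tau$ and $(r_n')^\tau$. The delicate point to state cleanly is that the friend of the transformed element lands in the correct transformed set (e.g.\ $\tau^{-1}\os{\sigma}\in\tau^{-1}S$ and $\os{\sigma}\tau\in S\tau$), which is precisely where the shift of the base flag by $\tau^{-1}$, respectively the post-composition by $\tau$, is doing the work.
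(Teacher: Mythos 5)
Your proposal is correct and follows essentially the same route as the paper: both prove claims (a) and (b) by direct computation from the definition, exhibiting $\tau^{-1}\os{\sigma}$ (respectively $\os{\sigma}\tau$) as the friend of $\tau^{-1}\sigma$ (respectively $\sigma\tau$), and both obtain claim (c) by composing the first two. The only cosmetic difference is that the paper tracks the facet-dependence of the friend explicitly (writing $\os{\sigma}_{F\tau^{-1}}$ and $\os{\sigma}_F$), whereas you only invoke existence at the relevant flag, which suffices.
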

\begin{proof}
    Let $\sigma\in S$ and $\Phi \in \cal K$, and let $F$ be the facet of $\Phi$.
    If we calculate $r_n'(\Phi\tau^{-1}\sigma)$ we get
    \[
    \begin{aligned}
        r_n'(\Phi\tau^{-1}\sigma) &= (r_n (\Phi\tau^{-1}))\os{\sigma}_{F\tau^{-1}}\\
        &= (r_n (\Phi\tau^{-1}))\tau\tau^{-1}\os{\sigma}_{F\tau^{-1}}\\
        &= (r_n^{\tau} \Phi)\tau^{-1}\os{\sigma}_{F\tau^{-1}}.
    \end{aligned}
    \]
    This proves that $\tau^{-1}S$ is $(r_n^\tau,r_n')$-friendly.

    If we calculate $(r_n')^\tau(\Phi\sigma\tau)$ we get
    \[
    \begin{aligned}
        (r_n')^\tau(\Phi\sigma\tau) &= (r_n'(\Phi\sigma\tau\tau^{-1}))\tau\\
        &= (r_n'(\Phi\sigma))\tau\\
        &= (r_n \Phi)\os{\sigma}_F\tau.
    \end{aligned}
    \]
    This proves that $S\tau$ is $(r_n,(r_n')^\tau)$-friendly.

    Combining these two results we get that $S^\tau$ is $(r_n^\tau,(r_n')^\tau)$-friendly.
\end{proof}

\begin{theorem}\label{thm:EqualUnivExt3}
    Let $(\cal K,r_n)$ and $(\cal K,r_n')$ be pre-extenders.
    Then the following are equivalent:
    \begin{enumerate}
        \item\label{item:UnivIsom'} There is an isomorphism $\varphi:\calU(\cal K,r_n)\to\calU(\cal K,r_n')$.
        \item\label{item:friendly'} There is a non-empty $(r_n,r_n')$-friendly set $S$.
        \item \label{item:SameSTG'} There is an automorphism $\tau$ of $\cal K$ such that $\calK_{r_n^\tau} / \frnd(\cal K,r_n)^\tau = \calK_{r_n'} / \frnd(\cal K,r_n')$.
    \end{enumerate}
\end{theorem}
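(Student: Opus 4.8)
The plan is to prove the cycle \ref{item:UnivIsom'}$\Rightarrow$\ref{item:friendly'}$\Rightarrow$\ref{item:SameSTG'}$\Rightarrow$\ref{item:UnivIsom'}, using Theorem~\ref{thm:EqualUnivExt2} as a black box together with Lemma~\ref{lemma:FriendlyConjugation} and the observation made just before that lemma, namely that every $\tau\in\Gamma(\cal K)$ lifts to an isomorphism $\cal U(\cal K,r_n)\cong\cal U(\cal K,r_n^\tau)$. The point is that Theorem~\ref{thm:EqualUnivExt2} already settles the ``base-point preserving'' situation, so all I need to do here is twist by a single automorphism $\tau$ to remove the hypotheses that the isomorphism fix the base facet and that the friendly set contain the identity. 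The one auxiliary fact I will isolate first is that conjugation of friendly groups is compatible with twisting of the matching: for every $\tau\in\Gamma(\cal K)$ we have $\frnd(\cal K,r_n^\tau)=\frnd(\cal K,r_n)^\tau$. This follows from Lemma~\ref{lemma:FriendlyConjugation}(iii) with $r_n'=r_n$, which gives that $\frnd(\cal K,r_n)^\tau$ is $r_n^\tau$-friendly, hence contained in $\frnd(\cal K,r_n^\tau)$; the reverse inclusion comes from applying the same item with $\tau^{-1}$ and the identity $(r_n^\tau)^{\tau^{-1}}=r_n$, which I would verify by the one-line computation $(r_n^\tau)^{\tau^{-1}}(\Phi)=(r_n^\tau(\Phi\tau))\tau^{-1}=((r_n\Phi)\tau)\tau^{-1}=r_n\Phi$.

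For \ref{item:UnivIsom'}$\Rightarrow$\ref{item:friendly'}, I simply recall that $\cal U(\cal K,r_n)=\cal K_{r_n}^\xi$ and $\cal U(\cal K,r_n')=\cal K_{r_n'}^{\xi'}$ for the universal voltage assignments, so an isomorphism $\varphi$ between them is in particular a covering, and Proposition~\ref{prop:s_phi} shows that $S=S_\varphi$ is $(r_n,r_n')$-friendly; it is non-empty since it contains $\varphi_1$.

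For \ref{item:friendly'}$\Rightarrow$\ref{item:SameSTG'}, I pick any $\tau\in S$. By Lemma~\ref{lemma:FriendlyConjugation}(i) the set $\tau^{-1}S$ is $(r_n^\tau,r_n')$-friendly, and since $\tau\in S$ it contains $\tau^{-1}\tau=1$, the identity of $\cal K$. Thus $(\cal K,r_n^\tau)$ and $(\cal K,r_n')$ satisfy condition \ref{item:friendly} of Theorem~\ref{thm:EqualUnivExt2}, so they satisfy its condition \ref{item:SameSTG}, that is $\calK_{r_n^\tau}/\frnd(\cal K,r_n^\tau)=\calK_{r_n'}/\frnd(\cal K,r_n')$. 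Substituting the auxiliary identity $\frnd(\cal K,r_n^\tau)=\frnd(\cal K,r_n)^\tau$ turns this into exactly the statement \ref{item:SameSTG'} for the witness $\tau$.

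For \ref{item:SameSTG'}$\Rightarrow$\ref{item:UnivIsom'}, starting from $\calK_{r_n^\tau}/\frnd(\cal K,r_n)^\tau=\calK_{r_n'}/\frnd(\cal K,r_n')$ and again replacing $\frnd(\cal K,r_n)^\tau$ by $\frnd(\cal K,r_n^\tau)$, I get condition \ref{item:SameSTG} of Theorem~\ref{thm:EqualUnivExt2} for the pair $(\cal K,r_n^\tau),(\cal K,r_n')$; that theorem then yields an isomorphism $\cal U(\cal K,r_n^\tau)\cong\cal U(\cal K,r_n')$. Composing with the lift $\tau^*\colon\cal U(\cal K,r_n)\to\cal U(\cal K,r_n^\tau)$ noted above produces the desired isomorphism $\cal U(\cal K,r_n)\cong\cal U(\cal K,r_n')$. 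The main obstacle is the bookkeeping in the auxiliary identity $\frnd(\cal K,r_n^\tau)=\frnd(\cal K,r_n)^\tau$: one must keep careful track of which pair $(r_n,r_n')$ each friendliness statement refers to and confirm that $(r_n^\tau)^{\tau^{-1}}=r_n$, since everything else reduces cleanly to Theorem~\ref{thm:EqualUnivExt2} once that is in place.
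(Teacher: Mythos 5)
Your proof is correct, and its skeleton is the same as the paper's: the cycle \ref{item:UnivIsom'}$\Rightarrow$\ref{item:friendly'}$\Rightarrow$\ref{item:SameSTG'}$\Rightarrow$\ref{item:UnivIsom'}, with Theorem~\ref{thm:EqualUnivExt2} as a black box, the lift $\tau^*$ of $\tau$ to an isomorphism $\calU(\calK,r_n)\to\calU(\calK,r_n^\tau)$, and the identity $\frnd(\calK,r_n^\tau)=\frnd(\calK,r_n)^\tau$ (which you rightly prove by applying Lemma~\ref{lemma:FriendlyConjugation} once per inclusion, exactly as the paper's parenthetical remark indicates; your check that $(r_n^\tau)^{\tau^{-1}}=r_n$ is the detail the paper leaves implicit). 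The one place you genuinely diverge is \ref{item:UnivIsom'}$\Rightarrow$\ref{item:friendly'}: the paper sets $\tau=\varphi_1$, composes $(\tau^*)^{-1}$ with $\varphi$ to obtain a base-facet-preserving isomorphism $\calU(\calK,r_n^\tau)\to\calU(\calK,r_n')$, feeds that into Theorem~\ref{thm:EqualUnivExt2} to extract an $(r_n^\tau,r_n')$-friendly set containing $1$, and then untwists via Lemma~\ref{lemma:FriendlyConjugation} to land back at an $(r_n,r_n')$-friendly set; you instead apply Proposition~\ref{prop:s_phi} directly to $\varphi$, which is legitimate because an isomorphism between the two universal extensions is in particular a covering between Cayley extensions of $\calK$, so $S_\varphi$ is a non-empty $(r_n,r_n')$-friendly set in one step. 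Your route is shorter and avoids the round trip through the twisted pre-extender; it is, in effect, the same mechanism the paper itself uses inside the proof of Theorem~\ref{thm:EqualUnivExt2}, promoted to the non-base-point-preserving setting, and nothing in Proposition~\ref{prop:s_phi} requires the covering to fix the base facet, so the shortcut is sound.
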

\begin{proof}
    \ref{item:UnivIsom'}$\Rightarrow$\ref{item:friendly'}: Let $\tau = \varphi_1$. Recall that $\tau$ lifts as an isomorphism $\tau^*$ from $\cal U(\cal K, r_n)$ to $\cal U(\cal K,r_n^\tau)$.
    By composing $(\tau^*)^{-1}$ with $\varphi$ we get an isomorphism from $\cal U(\cal K,r_n^\tau)$ to $\cal U(\cal K,r_n')$ that acts as the identity on the base facet.Then, Theorem\nobreakspace \ref {thm:EqualUnivExt2} tells us that there is a $(r_n^\tau,r_n')$-friendly set $S$ that includes the identity.
    By Lemma\nobreakspace \ref {lemma:FriendlyConjugation} we get that $\tau S$ is $(r_n,r_n')$-friendly, and since $1\in S$, we get that $\tau S$ is non-empty.

    \ref{item:friendly'}$\Rightarrow$\ref{item:SameSTG'}: Let $\tau \in S$.
    By Lemma\nobreakspace \ref {lemma:FriendlyConjugation}, $\tau^{-1}S$ is an $(r_n^\tau,r_n')$-friendly set; and since $\tau\in S$, we get that $1\in \tau^{-1}S$.
    Then, by Theorem\nobreakspace \ref {thm:EqualUnivExt2} we get that $\calK_{r_n^\tau} / \frnd(\cal K,r_n^\tau) = \calK_{r_n'} / \frnd(\cal K,r_n')$. But using Lemma\nobreakspace \ref {lemma:FriendlyConjugation} twice, we get that $\frnd(\cal K,r_n^\tau) = \frnd(\cal K,r_n)^\tau$ (use the lemma once for each inclusion).

    \ref{item:SameSTG'}$\Rightarrow$\ref{item:UnivIsom'}:
    Theorem\nobreakspace \ref {thm:EqualUnivExt2} tells us that $\cal U(\cal K, r_n^\tau)$ is isomorphic to $\cal U(\cal K,r_n')$, but $\cal U(\cal K, r_n^\tau)$ is also isomorphic to $\cal U(\cal K, r_n)$.
\end{proof}

Now that we know when two universal Cayley extensions are isomorphic we can characterize those polytopes and maniplexes that have a unique universal extension.

\begin{theorem}\label{thm:OneUnivExt}
    All the universal extensions of $\cal K$ are isomorphic to each other, if and only if every isomorphism between two different facets of $\cal K$ as well as every involutory automorphism of a facet of $\cal K$ can be extended to an automorphism of $\cal K$.

    In this case, $\Gamma(\cal K)$ is $r_n$-friendly for every pre-extender $(\cal K,r_n)$.
\end{theorem}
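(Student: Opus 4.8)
The plan is to translate the whole statement into the language of $(r_n,r_n')$-friendly sets via Theorem~\ref{thm:EqualUnivExt3}. Since $\cU(\cK,Id)$ is itself a universal extension, ``all universal extensions of $\cK$ are isomorphic'' is equivalent to the assertion that for every pair of pre-extenders $(\cK,r_n)$ and $(\cK,r_n')$ there is a non-empty $(r_n,r_n')$-friendly set. The central gadget in both directions is that an isomorphism between facets can be used to manufacture a pre-extender: given an isomorphism $\phi\colon F_1\to F_2$ between two \emph{different} facets, let $r_n'$ act as $\phi$ on $F_1$, as $\phi^{-1}$ on $F_2$, and as the identity on every other facet; given instead an \emph{involutory} automorphism $\psi$ of a single facet $F_0$, let $r_n'$ act as $\psi$ on $F_0$ and as the identity elsewhere. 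In each case $(r_n')^2=1$ and $r_n'$ restricts to an isomorphism on every facet, so $(\cK,r_n')$ is a genuine pre-extender; the involutory hypothesis in the second construction is precisely what forces $(r_n')^2=1$, while in the first construction any isomorphism is allowed because the two facets are interchanged.

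For the forward implication I would assume all universal extensions are isomorphic and recover the extension property. Given $\phi\colon F_1\to F_2$ with $F_1\neq F_2$, build $r_n'$ as above and take $r_n=Id$. By hypothesis $\cU(\cK,Id)\cong\cU(\cK,r_n')$, so Theorem~\ref{thm:EqualUnivExt3} supplies a non-empty $(Id,r_n')$-friendly set $S$; pick $\tau\in S$. Writing $\os{\tau}_F\in S\subseteq\Gamma(\cK)$ for the facet-dependent friend, the friendliness relation with $r_n=Id$ reads $r_n'(\Phi\tau)=\Phi\,\os{\tau}_F$ for every $\Phi$ in the facet $F$. Evaluating on $F=F_1\tau^{-1}$, so that $\Phi\tau$ ranges over $F_1$ where $r_n'$ acts as $\phi$, and substituting $\Psi=\Phi\tau$, yields $\Psi\phi=\Psi\,\tau^{-1}\os{\tau}_{F_1\tau^{-1}}$ for all $\Psi\in F_1$. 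Hence $\tau^{-1}\os{\tau}_{F_1\tau^{-1}}\in\Gamma(\cK)$ extends $\phi$. The identical computation applied to the second construction extends an arbitrary involutory facet automorphism, so the extension property holds.

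For the converse I would assume the extension property and prove the stronger statement that $\Gamma(\cK)$ is $(r_n,r_n')$-friendly for all pre-extenders; this simultaneously provides the non-empty friendly set needed for isomorphism and, on setting $r_n'=r_n$, the final ``in this case'' clause. Fix $\tau\in\Gamma(\cK)$ and a facet $F$. A friend of $\tau$ must restrict to the facet isomorphism $r_nF\to r_n'(F\tau)$ sending $r_n\Phi\mapsto r_n'(\Phi\tau)$, which factors through $F$ and $F\tau$ as $(r_n|_F)^{-1}$, then $\tau$, then $r_n'|_{F\tau}$. Now $\tau$ extends trivially to itself, while each of $r_n|_F\colon F\to r_nF$ and $r_n'|_{F\tau}\colon F\tau\to r_n'(F\tau)$ is either an isomorphism between two different facets or an involutory automorphism of a fixed facet, hence extends by the extension property to automorphisms $\rho,\rho'$ of $\cK$. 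Then $\rho^{-1}\tau\rho'\in\Gamma(\cK)$ restricts on $r_nF$ to exactly $r_n\Phi\mapsto r_n'(\Phi\tau)$, so it serves as a friend of $\tau$ lying in $\Gamma(\cK)$; thus $\Gamma(\cK)$ is $(r_n,r_n')$-friendly and Theorem~\ref{thm:EqualUnivExt3} gives $\cU(\cK,r_n)\cong\cU(\cK,r_n')$.

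The main obstacle I anticipate is the bookkeeping in the converse: one must check that the threefold factorization of the required friend isomorphism lands precisely in the two families handled by the extension property (isomorphisms of distinct facets versus involutory automorphisms of $r_n$-fixed facets), and that composing the chosen extensions as $\rho^{-1}\tau\rho'$ reproduces the correct map on $r_nF$ rather than merely a map between the correct pair of facets. Here freeness of the $\Gamma(\cK)$-action, used repeatedly in the paper, is essential: an automorphism of $\cK$ is determined by its restriction to a single facet, so matching restrictions on $r_nF$ suffices to identify the friend. A secondary point to verify carefully is the legitimacy of the two auxiliary pre-extenders in the forward direction, which is exactly where the distinction between arbitrary isomorphisms of distinct facets and merely involutory automorphisms of a single facet must appear.
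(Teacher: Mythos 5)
Your proof is correct, and it follows the paper's overall strategy: reduce both directions to the existence of non-empty friendly sets via Theorem~\ref{thm:EqualUnivExt3}, manufacture auxiliary pre-extenders from a facet isomorphism (resp.\ an involutory facet automorphism) exactly as the paper does, and show in the converse that the extension property makes $\Gamma(\cK)$ friendly. You differ from the paper in two execution points, both mild but genuine streamlinings. In the forward direction the paper first normalizes the friendly set so that $1\in S$ (invoking statement (iii) of Theorem~\ref{thm:EqualUnivExt3}, the identity $Id^\tau=Id$, and then Theorem~\ref{thm:EqualUnivExt2}) and reads off the extension by taking $\sigma=1$; you instead take an arbitrary $\tau\in S$ and exhibit the extending automorphism $\tau^{-1}\os{\tau}_{F_1\tau^{-1}}$ directly, bypassing the normalization. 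In the converse the paper proves only that $\Gamma(\cK)$ is $(r_n,r_n)$-friendly (the ``in this case'' clause) and then needs a separate symmetry-type-graph step --- $\cK_{r_n}/\Gamma(\cK)$ has a semi-edge at every vertex, hence coincides with $\cK_{Id}/\Gamma(\cK)$ --- before applying (iii)$\Rightarrow$(i) of Theorem~\ref{thm:EqualUnivExt3}; you prove the stronger claim that $\Gamma(\cK)$ is $(r_n,r_n')$-friendly for \emph{every} pair of pre-extenders, with friend $\rho^{-1}\tau\rho'$, so that (ii)$\Rightarrow$(i) applies to every pair at once and the ``in this case'' clause falls out by setting $r_n'=r_n$. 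Your route is more symmetric and skips the symmetry-type-graph argument; the paper's route makes explicit that every universal extension is isomorphic to the canonical one $\cU(\cK)$, the form used in the subsequent discussion of free extensions. The two verifications you flagged are indeed the load-bearing ones and you handle them correctly: $r_n^2=1$ guarantees that a restriction $\restr{r_n}{F}$ with $r_nF=F$ is involutory (so the hypothesis applies to it), and freeness of the $\Gamma(\cK)$-action ensures that matching $\phi$ on the flags of a single facet identifies a genuine extension, and that friends depend only on the facet.
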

\begin{proof}
    Because of Theorem\nobreakspace \ref {thm:EqualUnivExt3}, if  $\cal U(\cal K,r_n)$ is isomorphic to $\cal U(\cal K, Id)$ we get that there is a non-empty $(Id,r_n)$-friendly set $S$.
    This means that given $\sigma$ in $S$ and $\Phi$ in $\cal K$, there exists some $\os{\sigma}\in S$ such that $r_n(\Phi\sigma) = \Phi\os{\sigma}$.
    Moreover, because of statement \ref{item:SameSTG'} in Theorem\nobreakspace \ref {thm:EqualUnivExt3}, there must be some automorphism $\tau$ of $\cal K$ such that $\calK_{Id^\tau} / \frnd(\cal K,Id)^\tau = \calK_{r_n} / \frnd(\cal K,r_n)$, but $Id^\tau = Id$ for every possible $\tau$. This shows that $\calK_{Id} / \frnd(\cal K,Id) = \calK_{r_n} / \frnd(\cal K,r_n)$, and therefore using Theorem\nobreakspace \ref {thm:EqualUnivExt2} we may assume that $1$ is in $S$. 
    By taking $\sigma =  1$ we get that $\Phi$ and $r_n\Phi$ must be in the same orbit.
    In other words, $\restr{r_n}{F}$ can be extended to an automorphism of $\cal K$.

    Suppose that all the universal extensions of $\cal K$ are isomorphic.
    Let $F$ and $F'$ be two isomorphic facets of $\cal K$.
    Let $\sigma$ be an isomorphism between $F$ and $F'$.
    By our previous observation, to prove that $\sigma$ can be extended to an automorphism of $\cal K$ we only have to find a pre-extender $(\cal K,r_n)$ such that $\restr{r_n}{F} = \sigma$.
    If $F\neq F'$, we simply let $r_n$ act as $\sigma$ on $F$, as $\sigma^{-1}$ on $F'$ and as the identity in every other facet of $\cal K$.
    If $F=F'$ and $\sigma$ has order 2, we define $r_n$ as $\sigma$ on $F$ and as the identity anywhere else.

    We have proven that if all the universal extensions of $\cal K$ are isomorphic, then the isomorphisms between two different facets of $\cal K$ and the involutory automorphisms of facets of $\cal K$ can be extended to an automorphism of $\cal K$.
    Now let us assume that we can extend such automorphisms.
    Let $(\cal K,r_n)$ be an arbitrary pre-extender of $\cal K$.
    First we will prove that $\Gamma(\cal K)$ is $r_n$-friendly.

    Let $\tau\in\Gamma(\cal K)$, $\Phi$ a flag of $\cal K$, and $F$ the facet of $\Phi$.
    By hypothesis, $\sigma:=\restr{r_n}{F\tau}$, which is an isomorphism between the facets $F\tau$ and $r_n(F\tau)$, can be extended to an automorphism $\gamma\in\Gamma(\cal K)$.
    Symbolically, $r_n(\Phi\tau) = (\Phi\tau)\sigma = \Phi\tau\gamma$.
    On the other hand, $\sigma':=\restr{r_n}{r_n F}$ can also be extended to an automorphism $\gamma'\in\Gamma(\cal K)$, and this means $\Phi = r_n(r_n\Phi) = (r_n\Phi)\sigma' = (r_n\Phi)\gamma'$.
    If we now make a simple substitution we get that $r_n(\Phi\tau) = (r_n \Phi)\gamma'\tau\gamma$, and since $\tau$ and $\Phi$ were arbitrary, we proved that $\Gamma(\cal K)$ is $r_n$-friendly, or symbolically, $\frnd(\calK, r_n) = \Gamma(\cal K)$.
    This proves the second statement of our theorem.

    Finally, we see that $\cal K_{r_n}/\frnd(\cal K,r_n) = \cal K_{r_n}/\Gamma(\cal K)$, but since the restrictions of $r_n$ to a facet are isomorphisms or involutory automorphisms, our hypothesis ensures that every flag is in the same $\Gamma(\cal K)$-orbit as its $n$-adjacent flag.
    This means that $\cal K_{r_n}/\Gamma(\cal K)$ has a semi-edge at each vertex, or in other words that it is isomorphic to $\cal K_{Id}/\Gamma(\cal K)$.
    Therefore, Theorem\nobreakspace \ref {thm:EqualUnivExt3} tells us that $\cal U(\cal K,r_n)$ is isomorphic to the universal canonical Cayley extension of $\cal K$.
    This completes our proof.
\end{proof}

\begin{corollary} \label{cor:univ-reg}
    If $\calK$ is regular, then all the universal extensions of $\cal K$ are isomorphic and regular.
\end{corollary}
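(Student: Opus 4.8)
The plan is to derive everything from Theorem~\ref{thm:OneUnivExt}. That theorem says the universal extensions of $\calK$ are all isomorphic precisely when every isomorphism between two distinct facets of $\calK$ and every involutory automorphism of a single facet extends to an automorphism of $\calK$; and in that situation it also gives $\frnd(\calK,r_n)=\Gamma(\calK)$ for every pre-extender. So my first task is to check that these extension hypotheses hold automatically when $\calK$ is regular, and my second task is to use the resulting identification of the friendly group to pin down the symmetry type graph via Theorem~\ref{thm:UnivSymType}.

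For the first task I would prove the stronger statement that a regular maniplex is hereditary, i.e.\ that \emph{every} automorphism of a facet extends to an automorphism of $\calK$ fixing that facet. Fixing a base flag $\Phi$ with $\Gamma(\calK)=\langle\rho_0,\dots,\rho_{n-1}\rangle$ and base facet $F=\Phi_{n-1}$, the stabilizer of $F$ is $\langle\rho_0,\dots,\rho_{n-2}\rangle$, and restriction to $F$ gives a homomorphism from this stabilizer to $\Gamma(F)$. Since $\Gamma(\calK)$ acts freely on flags (connectedness), an automorphism fixing a flag of $F$ is trivial, so the restriction map is injective; and since $\langle\rho_0,\dots,\rho_{n-2}\rangle$ already acts transitively on the flags of $F$ (each flag of $F$ is $w\Phi$ for a word $w$ in $r_0,\dots,r_{n-2}$, realized by the corresponding element of $\langle\rho_0,\dots,\rho_{n-2}\rangle$, which fixes $F$ setwise), the map is also surjective. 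Hence it is an isomorphism, giving heredity for $F$, and then for every facet by transitivity of $\Gamma(\calK)$ on facets. The two hypotheses of Theorem~\ref{thm:OneUnivExt} follow at once: an involutory facet automorphism extends by heredity, and for an isomorphism $\sigma\colon F\to F'$ between distinct facets I would pick $\gamma\in\Gamma(\calK)$ with $F\gamma=F'$, observe that $\sigma\,(\restr{\gamma}{F})^{-1}$ is an automorphism of $F$, extend it by heredity to $\delta$ fixing $F$, and note $\delta\gamma$ restricts to $\sigma$. Theorem~\ref{thm:OneUnivExt} then yields that all universal extensions of $\calK$ are isomorphic and that $\frnd(\calK,r_n)=\Gamma(\calK)$ for every pre-extender.

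For regularity, since the universal extensions are now all isomorphic it suffices to exhibit one regular representative, and I would take the canonical one $\calU(\calK)=\calU(\calK,Id)$. By Theorem~\ref{thm:UnivSymType} its symmetry type graph is $\calK_{Id}/\frnd(\calK,Id)=\calK_{Id}/\Gamma(\calK)$. As $\calK$ is regular, $\Gamma(\calK)$ is transitive on the flags of $\calK$, so this quotient has a single vertex (carrying a semi-edge of color $n$); a maniplex whose symmetry type graph has one vertex is regular, so $\calU(\calK)$ and hence every universal extension of $\calK$ is regular.

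The main obstacle I anticipate is the heredity step, specifically the surjectivity of the restriction map onto $\Gamma(F)$, which is where regularity is genuinely used: it rests on the fact that the facet stabilizer already acts transitively on the flags of the facet. Everything after that (deducing the two extension properties and reading off regularity from the single-vertex symmetry type graph) is formal, given Theorems~\ref{thm:OneUnivExt} and~\ref{thm:UnivSymType}.
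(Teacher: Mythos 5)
Your proposal is correct and follows essentially the route the paper intends: the corollary is stated without proof precisely because it drops out of Theorem~\ref{thm:OneUnivExt} (once one knows that in a regular maniplex every facet isomorphism extends to a global automorphism) together with the symmetry-type-graph computation of Theorem~\ref{thm:UnivSymType}, which forces a one-vertex symmetry type graph and hence regularity. The only content you add beyond the paper's implicit argument is a from-scratch verification that regular maniplexes are hereditary (via the injective-and-surjective restriction map from the facet stabilizer $\langle\rho_0,\dots,\rho_{n-2}\rangle$ to $\Gamma(F)$), a standard fact the paper takes for granted, and your verification of it is sound.
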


Every abstract regular polytope $\cal K$ has a regular extension $\{\cal K,\infty\}$, called \emph{the free extension of $\cal K$}, which covers all other regular extensions of $\cal K$ (\cite[Thm. 4D4]{McMullenSchulte_2002_AbstractRegularPolytopes}). It follows that $\{\calK,\infty\}$ covers the unique universal Cayley extension of $\cal K$, $\cal U(\cal K)$.
Also, using other results from \cite{McMullenSchulte_2002_AbstractRegularPolytopes} (for example, Lemma 4E10), we get that $\{\cal K,\infty\}$ has the Flat Amalgamation Property (i.e., it is a canonical Cayley extension of $\cal K$, see \ref{prop:fap-iff-canonical}), which means that it is covered by $\cal U(\cal K)$.
The composition of both covering homomorphisms (in any order) would be a covering homomorphism of a regular polytope to itself, but since homomorphisms are defined by the image of a single flag, this composition would have to be an automorphism.
This proves that for a regular polytope, the free extension is the same as the universal Cayley extension.

Regular maniplexes are not the only family of maniplexes with a unique universal Cayley extension, as we see in the following corollary. A $k$-maniplex with $k \geq n$ has \emph{symmetry type $2_{\overline{n-1}}$} if it has two flag orbits, and for every flag $\Phi$, the flags $\Phi$ and $r_i \Phi$ are in the same orbit if and only if $i \neq n-1$. In particular, such maniplexes have two orbits of facets.

\begin{corollary} \label{cor:univ-alt-semi-reg}
    If $\calK$ is an $n$-maniplex with symmetry type $2_{\overline{n-1}}$ and it has two isomorphism classes of facets, then all the universal extensions of $\cal K$ are isomorphic $(n+1)$-maniplexes with symmetry type $2_{\overline{n-1}}$.

\end{corollary}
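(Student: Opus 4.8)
The plan is to verify the hypotheses of Theorem~\ref{thm:OneUnivExt} to get the isomorphism statement, and then read off the symmetry type from Theorem~\ref{thm:UnivSymType}. Write $\Gamma = \Gamma(\calK)$ and let $O_0, O_1$ be its two flag orbits. Since $\calK$ has symmetry type $2_{\overline{n-1}}$, the monodromies $r_0, \dots, r_{n-2}$ preserve the orbits while $r_{n-1}$ interchanges them; because a facet of $\calK$ is a connected component of the colors $0, \dots, n-2$, every facet lies entirely in a single orbit, and I will call a facet of \emph{type $j$} if its flags lie in $O_j$. Note that $\Gamma$ is transitive on each orbit, hence transitive on the facets of each type.

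The key step, which I expect to be the main obstacle, is the observation that every facet of $\calK$ is regular, and that two facets are isomorphic if and only if they have the same type, in which case any isomorphism between them extends to an element of $\Gamma$. Regularity follows because, given flags $\Phi, \Psi$ of a facet $F$ (necessarily in the same orbit), there is $\gamma \in \Gamma$ with $\Phi\gamma = \Psi$; since $\Psi \in F$ and $\gamma$ permutes facets, $\gamma$ fixes $F$ setwise, so $\restr{\gamma}{F} \in \Gamma(F)$ carries $\Phi$ to $\Psi$ and $F$ is flag-transitive. The same idea gives extendability: if $\sigma : F \to F'$ is an isomorphism of facets of the same type and $\Phi \in F$, then $\Phi$ and $\Phi\sigma$ lie in one orbit, so some $\gamma \in \Gamma$ sends $\Phi$ to $\Phi\sigma$; this $\gamma$ must carry $F$ to $F'$ and agrees with $\sigma$ on $\Phi$, whence $\restr{\gamma}{F} = \sigma$ because a maniplex isomorphism is determined by the image of a single flag. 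Finally, facets in the same orbit are isomorphic by transitivity, while the hypothesis of \emph{two} isomorphism classes forces facets of different types to be non-isomorphic; this is exactly the point where that hypothesis is essential, since without it an isomorphism between a type-$0$ and a type-$1$ facet would send a flag into the other orbit and could fail to extend.

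This observation delivers the hypotheses of Theorem~\ref{thm:OneUnivExt}: every isomorphism between two distinct facets extends (they must share a type), and every involutory automorphism of a facet extends (being an isomorphism of a facet with itself). Therefore all universal extensions of $\calK$ are isomorphic, and the theorem also yields $\frnd(\calK, r_n) = \Gamma$ for every pre-extender; in the canonical case this agrees with Corollary~\ref{coro:r_n-friendly-grp}.

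It remains to identify the symmetry type, and here I would work with the canonical universal extension $\calU(\calK)$, to which all the others are isomorphic. By the corollary following Theorem~\ref{thm:UnivSymType}, its symmetry type graph is that of $\calK$ with a semi-edge of color $n$ adjoined at each node. The symmetry type graph of $\calK$ consists of exactly two nodes joined by a single $(n-1)$-link, carrying semi-edges of every color $i \neq n-1$ at each node, which is precisely the defining picture of type $2_{\overline{n-1}}$ in rank $n$. Adjoining the color-$n$ semi-edges produces two nodes joined by a single $(n-1)$-link with semi-edges of all colors except $n-1$, which is exactly the symmetry type graph of a $2_{\overline{n-1}}$ maniplex of rank $n+1$. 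Hence $\calU(\calK)$, and therefore every universal extension of $\calK$, is an $(n+1)$-maniplex of symmetry type $2_{\overline{n-1}}$.
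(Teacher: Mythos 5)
Your proposal is correct, and for the isomorphism claim it follows essentially the same route as the paper: both arguments use the $2_{\overline{n-1}}$ orbit structure to show that any isomorphism $\sigma$ between facets maps flags of $F$ to flags in the same $\Gamma(\calK)$-orbit (the two-isomorphism-classes hypothesis forcing facets of different types to be non-isomorphic, so isomorphic facets share a type), whence $\sigma$ extends to an automorphism of $\calK$ and Theorem~\ref{thm:OneUnivExt} applies; your version merely spells out steps the paper compresses, such as why each facet lies in a single flag orbit and why an automorphism agreeing with $\sigma$ on one flag must restrict to $\sigma$. Where you genuinely go beyond the paper is the symmetry-type claim: the paper's proof stops once all universal extensions are shown to be isomorphic and never verifies that they have type $2_{\overline{n-1}}$, whereas you complete this by passing to the canonical universal extension $\calU(\calK)$ and invoking the corollary following Theorem~\ref{thm:UnivSymType}, so that the symmetry type graph of $\calU(\calK)$ is that of $\calK$ (two nodes joined by an $(n-1)$-link, with semi-edges of every other color) together with a color-$n$ semi-edge at each node, which is precisely the rank-$(n+1)$ picture of class $2_{\overline{n-1}}$. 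This makes your write-up strictly more complete than the paper's own proof, and your identification of the symmetry type is the natural way to fill the gap the paper leaves implicit.
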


\begin{proof}
    Suppose $\cal K$ has symmetry type $2_{\ol{n-1}}$ and that it has two isomorphism classes of facets. 
    Let $\sigma:F\to F'$ be any isomorphism between two facets $F$ and $F'$ of $\cal K$ (we may assume that $\sigma$ is involutory if $F=F'$, but it is not necessary).
    Since the two types of facet of $\cal K$ are non-isomorphic, $F$ and $F'$ must be in the same $\Gamma(\cal K)$-orbit, and since $\cal K$ has symmetry type $2_{\ol{n-1}}$, all the flags in the same type of facet are in the same $\Gamma(\cal K)$-orbit.
    Since $\sigma$ maps flags in $F$ to flags in the same $\Gamma(\cal K)$-orbit, it can be extended to an automorphism of $\cal K$.
    Then Theorem\nobreakspace \ref {thm:OneUnivExt} tells us that all the universal extensions of $\cal K$ are isomorphic.
\end{proof}

In the previous result, the fact that there are two isomorphism classes of facets is necessary:
    Let $\cal K$ have symmetry type $2_{\ol{n-1}}$ but with all facets isomorphic to each other.
    For example, there is a map with symmetry type $2_{\ol{2}}$ with automorphism group
    \[    \langle x, y, z : x^2 = y^2 = z^2 = (xy)^4 = (xz)^4 = (yz)^3 = xz(xy)^2 zyxy = 1 \rangle.
    \]
It is clear that $\cal K$ must have at least 2 facets $F$ and $G$ of one type.
    Let $F'$ be a facet of the other type.
    Let us define $r_n$ as some isomorphism between $F$ and $F'$, and as the identity in any other facet.

    Let $\Phi$ be a flag in $F$, and let $\tau\in \Gamma(\cal K)$ be such that $\Phi\tau$ is in $G$.
    Then $r_n(\Phi\tau) = \Phi\tau$ by construction of $r_n$.
    On the other hand, $(r_n \Phi)\tau$ is in the same orbit as $r_n\Phi$, which is the opposite orbit to $\Phi$, so there is no automorphism $\os{\tau}$ such that $(r_n\Phi)\os{\tau} = r_n(\Phi\tau)$.

\begin{corollary}\label{coro:Reg-or-Semireg}
    Let $\cal K$ be a map. Then, all the universal extensions of $\cal K$ are isomorphic to each other if and only if $\cal K$ is either regular or alternating semiregular with non isomorphic facets (i.e. it has type $2_{\ol{2}}$).
\end{corollary}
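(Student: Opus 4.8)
The plan is to treat Corollary~\ref{coro:Reg-or-Semireg} as the rank-$3$ specialization of Theorem~\ref{thm:OneUnivExt}, splitting into the two directions. The ``if'' direction requires no new work: if $\cal K$ is regular, then Corollary~\ref{cor:univ-reg} already says that all its universal extensions are isomorphic (and regular); if instead $\cal K$ has symmetry type $2_{\overline{2}}$ with two isomorphism classes of facets, then, because a map has rank $n=3$ and hence $2_{\overline{n-1}}=2_{\overline{2}}$, Corollary~\ref{cor:univ-alt-semi-reg} applies directly and again yields that all universal extensions coincide.

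For the ``only if'' direction I start from the assumption that all universal extensions of $\cal K$ are isomorphic and invoke Theorem~\ref{thm:OneUnivExt}: this is equivalent to the \emph{extension condition} that every isomorphism between two distinct facets of $\cal K$, and every involutory automorphism of a facet, extends to an automorphism of $\cal K$. The crucial rank-$3$ feature is that each facet $F$ of $\cal K$ is a polygon, hence a regular $2$-maniplex: relative to a base flag $\Psi\in F$, its distinguished generators $\rho_0^F,\rho_1^F$ are involutory automorphisms of $F$ with $\Psi\rho_i^F=\Psi^i$ for $i=0,1$ (here $\Psi^i=r_i\Psi$, and these flags stay in $F$ since colors $0,1$ are shared by $F$ and $\cal K$). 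Applying the extension condition to $\rho_0^F$ and $\rho_1^F$, each extends to an automorphism of $\cal K$ whose restriction to $F$ is $\rho_i^F$, and which therefore sends $\Psi$ to $\Psi^0$, respectively $\Psi^1$. Since $\Psi$ was arbitrary, I conclude that $\Psi$, $\Psi^0$ and $\Psi^1$ lie in a common orbit of $\Gamma(\cal K)$ for \emph{every} flag $\Psi$.

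Next I read this off the symmetry type graph $T=\cal K/\Gamma(\cal K)$. The previous paragraph says every node of $T$ carries a $0$-semiedge and a $1$-semiedge, so distinct nodes are never joined by a dart of color $0$ or $1$; the remaining darts, of color $2$, form a perfect matching on the nodes. As $T$ is connected, either it has a single node, in which case $\rho_0,\rho_1,\rho_2$ all exist and $\cal K$ is regular, or it has exactly two nodes joined by a color-$2$ link, each carrying $0$- and $1$-semiedges, which is precisely symmetry type $2_{\overline{2}}$. In this second case I must still produce two isomorphism classes of facets. Calling the two flag-orbits $A$ and $B$, all flags of a given facet lie in one orbit (within the facet they are joined by $r_0,r_1$), so each facet is of ``type $A$'' or ``type $B$''; if a type-$A$ facet were isomorphic to a type-$B$ facet, that isomorphism between two distinct facets would, by the extension condition, extend to an automorphism of $\cal K$ carrying a flag of orbit $A$ to a flag of orbit $B$. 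But an automorphism preserves orbits, forcing $A=B$ and contradicting that $T$ has two nodes. Hence the facets split into two isomorphism classes and $\cal K$ is $2_{\overline{2}}$ with non-isomorphic facets.

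I expect the main obstacle to be this last step: upgrading ``symmetry type $2_{\overline{2}}$'' to ``$2_{\overline{2}}$ with non-isomorphic facets,'' since the weaker conclusion is genuinely false (the example following Corollary~\ref{cor:univ-alt-semi-reg} exhibits a $2_{\overline{2}}$ map with all facets isomorphic whose universal extensions differ). The leverage is exactly the facet-isomorphism half of the extension condition, which the polygon-automorphism argument alone never uses. A secondary point to handle carefully is that $\rho_0^F,\rho_1^F$ are genuine involutions—nontrivial because $r_0,r_1$ are fixed-point-free—and that ``extends'' means the ambient automorphism restricts to them on $F$, which is what lets me transport the local relation $\Psi\rho_i^F=\Psi^i$ into a global orbit statement.
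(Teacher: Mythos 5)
Your proof is correct, and its core is the same as the paper's: since the facets of a map are polygons, hence regular, Theorem\nobreakspace \ref{thm:OneUnivExt} forces every flag of $\cal K$ to lie in the same $\Gamma(\cal K)$-orbit as its $0$- and $1$-adjacent flags, so the symmetry type graph is either a single node (regular) or two nodes joined by a color-$2$ edge (type $2_{\ol{2}}$). Where you differ is in completeness rather than route. The paper's written proof stops exactly at that dichotomy: it does not spell out the ``if'' direction (implicitly delegated to Corollaries\nobreakspace \ref{cor:univ-reg} and\nobreakspace \ref{cor:univ-alt-semi-reg}, which you invoke explicitly), and --- more importantly --- it never argues that in the $2_{\ol{2}}$ case the two orbits of facets consist of non-isomorphic facets, even though that clause is part of the statement and is essential: the example placed just before the corollary exhibits a $2_{\ol{2}}$ map with all facets isomorphic whose universal extensions are \emph{not} all isomorphic. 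Your final step --- an isomorphism between a type-$A$ facet and a type-$B$ facet would, by the extension condition of Theorem\nobreakspace \ref{thm:OneUnivExt}, extend to an automorphism of $\cal K$ sending orbit-$A$ flags to orbit-$B$ flags, which is impossible since automorphisms preserve $\Gamma(\cal K)$-orbits and each facet's flags lie in a single orbit --- is precisely the missing argument, and it is sound. So your proposal not only matches the paper's approach but supplies two pieces (the converse direction and the non-isomorphic-facets upgrade) that the paper's own proof leaves unwritten.
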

\begin{proof}
    Let $\cal K$ be a map such that all its universal extensions are isomorphic. The facets of $\cal K$ are polygons, which are regular.
    Therefore Theorem\nobreakspace \ref {thm:OneUnivExt} tells us that every flag of $\calK$ is in the same orbit as its 0-adjacent flag and its 1-adjacent flag (note that an automorphism that maps a flag to its 0-adjacent flag is an involution).
    Therefore, $\cal K$ is either regular or it has symmetry type $2_{\ol{2}}$.
\end{proof}

Note that Corollary\nobreakspace \ref {coro:Reg-or-Semireg} does not generalize to higher ranks.
For example, if $\cal M$ has no pair of facets that are isomorphic and no facet has a non-trivial automorphism, then it also has a unique universal Cayley extension.
 
\printbibliography

\end{document}